\newtheorem{Thm}{Theorem}[section]
\newtheorem{Cor}[Thm]{Corollary}
\newtheorem{Conj}[Thm]{Conjecture}
\newtheorem{Prop}[Thm]{Proposition}
\newtheorem{Lem}[Thm]{Lemma}
\theoremstyle{definition}
\newtheorem{Def}[Thm]{Definition}
\newtheorem{Not}[Thm]{Notation}
\newtheorem{Rem}[Thm]{Remark}
\theoremstyle{remark}
\numberwithin{equation}{section}
\newcommand{\Aut}{\operatorname{Aut}}
\newcommand{\Hom}{\operatorname{Hom}}
\newcommand{\Syl}{\operatorname{Syl}}
\newcommand{\Out}{\operatorname{Out}}
\renewcommand{\dim}{\operatorname{dim}}
\newcommand{\Inn}{\operatorname{Inn}}
\newcommand{\Coind}{\operatorname{Coind}}
\def \bb {\mathfrak{b}}
\def \qq {\mathfrak{q}}
\newcommand{\res}{\operatorname{res}}
\newcommand{\tr}{\operatorname{tr}}
\newcommand{\Spin}{\operatorname{Spin}}
\newcommand{\Sol}{\operatorname{Sol}}
\newcommand{\GL}{\operatorname{GL}}
\newcommand{\SL}{\operatorname{SL}}
\newcommand{\Irr}{\operatorname{Irr}}
\newcommand{\SO}{\operatorname{SO}}
\renewcommand{\epsilon}{\varepsilon}
\renewcommand{\bar}{\overline}
\renewcommand{\hat}{\widehat}
\renewcommand{\leq}{\leqslant}
\renewcommand{\geq}{\geqslant}
\renewcommand{\phi}{\varphi}
\newcommand{\C}{\mathcal{C}}
\newcommand{\A}{\mathcal{A}}
\newcommand{\D}{\mathcal{D}}
\newcommand{\F}{\mathcal{F}}
\renewcommand{\H}{\mathcal{H}}
\newcommand{\K}{\mathcal{K}}
\newcommand{\Q}{\mathcal{Q}}
\newcommand{\Ab}{\mathcal{A}b}
\newcommand{\ZZ}{\mathbb{Z}}
\newcommand{\CC}{\mathbb{C}}
\newcommand{\FF}{\mathbb{F}}
\newcommand{\gen}[1]{\langle #1 \rangle}
\newcommand{\norm}{\unlhd}
\newcommand{\Lim}[1]{\lim_{#1}}
\newcommand{\smat}[1]{\left[\begin{smallmatrix}#1\end{smallmatrix}\right]}
\newcommand{\diag}{\operatorname{diag}}
\begin{document}
\title{Weights in a Benson-Solomon block}

\author{Justin Lynd}
\address{Department of Mathematics\\University of Louisiana at Lafayette\\Lafayette, LA 70504}
\email{lynd@louisiana.edu}

\author{Jason Semeraro}
\address{Heilbronn Institute for Mathematical Research, Department of
Mathematics, University of Leicester,  United Kingdom}
\email{jpgs1@leicester.ac.uk}

\thanks{
\begin{minipage}[s]{0.1\textwidth}
\includegraphics[scale=0.23]{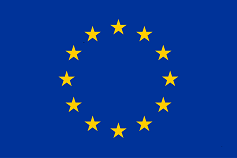}
\end{minipage}
\begin{minipage}[s]{0.9\textwidth}
This project has received funding from the European Union's Horizon 2020
research and innovation programme under the Marie Sk{\l}odowska-Curie grant
agreement No. 707758.
\end{minipage}
}

\begin{abstract} 
To each pair consisting of a saturated fusion system over a $p$-group together
with a compatible family of K\"{u}lshammer-Puig cohomology classes, one can
count weights in a hypothetical block algebra arising from these data. When the
pair arises from a genuine block of a finite group algebra in characteristic
$p$, the number of conjugacy classes of weights is supposed to be the number of
simple modules in the block. We show that there is unique such pair associated
with each Benson-Solomon exotic fusion system, and that the number of weights
in a hypothetical Benson-Solomon block is $12$, independently of the field of
definition.  This is carried out in part by listing explicitly up to conjugacy
all centric radical subgroups and their outer automorphism groups in these
systems.  
\end{abstract}

\keywords{fusion system, Benson-Solomon fusion system, block, weight, simple
module}

\subjclass[2010]{20D06, 20D20, 20C20, 20C33}

\maketitle

\section{Introduction}
Let $k$ be an algebraically closed field of characteristic $p > 0$, and let $G$
be a finite group. Associated to each block $b$ of $kG$, there is a saturated
fusion system $\F = \F_S(b)$ over the defect group $S$ of the block in which
the morphisms between subgroups are given by conjugation by elements of $G$
preserving the corresponding Brauer pairs \cite{AschbacherKessarOliver2011,
CravenTheory}.  Several questions in the modular representation theory of
finite groups concern the connection between representation theoretic
properties of $kGb$ and the category $\F$. However, it is known that for many
purposes $\F$ does not, in general, retain enough information about $kGb$-{\rm
mod}. For example, it does not determine the number of simple modules in $b$,
in part because it retains too little of the $p'$-structure of $p$-local
subgroups.

On the other hand, the block $b$ also determines a family of degree 2
cohomology classes $\alpha_Q \in H^2(\Aut_\F(Q), k^\times)$, for $Q \in \F^c$
an $\F$-centric subgroup, by work of K\"ulshammer and Puig (see
\cite[IV.5.5]{AschbacherKessarOliver2011}.) This family is expected to supply
the missing information away from the prime $p$.  The K\"ulshammer-Puig classes
are compatible in the sense that, by \cite[Theorem~8.14.5]{LinckelmannBT2},
they determine an element
\[
\alpha \in \lim_{[S(\F^c)]} \A_\F^2
\]
where $[S(\F^c)]$ is the poset of $\F$-isomorphism classes of chains
$\sigma=(X_0 < X_1 < \cdots < X_n)$ of $\F$-centric subgroups, and $\A_{\F}^2$
is the covariant functor which sends a chain $\sigma$ to $H^2(\Aut_\F(\sigma),
k^\times)$. Here, $\Aut_\F(\sigma) \leq \Aut_\F(X_n)$ is the group of
automorphisms in $\F$ of $X_n$ preserving all members $X_i$ of the chain.  For
example, if $b$ is the principal block of $kG$ then $\alpha$ is always the
trivial class \cite[IV.5.32]{AschbacherKessarOliver2011}. 

Thus, by a \textit{K\"{u}lshammer-Puig pair}, we mean a pair $(\F,\alpha)$
where $\F$ is a saturated fusion system on a $p$-group $S$ and $\alpha$ is an
element of $\lim_{[S(\F^c)]} \A_\F^2$.  Given such a pair $(\F,\alpha)$ arising
from a block $b$, the quantity
\begin{equation*}
\mathbf{w}(\F,\alpha):=\sum_{Q \in \F^{cr}/\F} z(k_{\alpha_Q}\Out_\F(Q)), 
\end{equation*} 
counts the number of $kGb$-weights. Here, $k_{\alpha_Q}\Out_\F(Q)$ is the
algebra obtained from the group algebra $k\Out_\F(Q)$ by twisting with
$\alpha_Q$ \cite[IV.5.36]{AschbacherKessarOliver2011}, $z(-)$ denotes the
number projective simple modules, and the sum is taken over a set of
representatives for the conjugacy classes of $\F$-centric and $\F$-radical
subgroups.  Thus, Alperin's Weight Conjecture says that $\mathbf{w}(\F,\alpha)$
is the number of simple $kGb$-modules
\cite[IV.5.46]{AschbacherKessarOliver2011}.

There is always a natural map $H^{2}(\F^c, k^{\times}) \to \Lim{[S(\F^c)]}
\A_\F^2$, and the \emph{gluing problem} asks whether this map is surjective
(see \cite{Linckelmann2009b} and \cite{Libman2011} for further details).
Linckelmann has shown that Alperin's conjecture has a structural reformulation
in terms of algebras constructed from $p$-local finite groups provided the
gluing problem always has a solution \cite{Linckelmann2004}.  However, while
the weight conjecture has relevance for actual blocks only, the gluing problem
is a question about the K\"{u}lshammer-Puig pair itself and can be considered
(1) when $\F$ is the fusion system of a block, but of no block with the
specified compatible family $\alpha$, and (2) when $\F$ is the fusion system of
no block at all.  Thus, we are interested in investigating such pairs
disembodied from an actual block as a way of gauging the degree to which
certain questions, and potential answers to those questions, are $p$-locally
determined. A direct study of K\"{u}lshammer-Puig pairs might reveal, for
example, that there is an exotic pair as in (1) or (2) that does not satisfy
the gluing problem. At this stage, such a possibility seems unlikely. On the
other hand, and conversely, we would be very interested in a structural
explanation why the gluing problem should hold in general, and it seems
reasonable to expect that such an explanation would apply to all such pairs,
exotic or not.

In this paper we consider K\"{u}lshammer-Puig pairs associated with the exotic
family $\Sol(q)$ of Benson-Solomon $2$-fusion systems
\cite{AschbacherChermak2010, LeviOliver2002}. These systems are defined for any
odd prime power, but $\Sol(q)$ and $\Sol(q')$ are isomorphic as fusion systems
if and only if $v_2(q^{2}-1) = v_2(q'^{2}-1)$, where $v_2$ is the $2$-adic
valuation.  A Benson-Solomon system is known not to be the fusion system of any
genuine block. This is a result of Kessar for the smallest such system
\cite{Kessar2006}, while Craven extended Kessar's proof to the general case in
\cite[Theorem 9.34]{CravenTheory}.  Our first theorem determines the possible
K\"{u}lshammer-Puig classes that these fusion systems support.

\begin{Thm}\label{t:solblocks}
Let $\F = \Sol(q)$. Then 
\[
\lim_{[S(\F^{c})]} \A_\F^2 \cong \lim_{[S(\F^{cr})]} \A_\F^2 = 0.
\]
That is, each Benson-Solomon system supports a unique K\"{u}lshammer-Puig pair.
\end{Thm}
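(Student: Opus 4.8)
The plan is to replace the functor $\A_\F^2$ by something depending only on odd Schur multipliers of outer automorphism groups, and then to evaluate the limit on an explicit short list of subgroups. First I would record a purely cohomological simplification. Since $k$ is algebraically closed of characteristic $2$, the group $k^\times$ is uniquely $2$-divisible with only odd torsion; hence for any finite group $G$ the universal coefficient theorem together with divisibility gives $H^2(G,k^\times)\cong M(G)_{2'}$, the odd part of the Schur multiplier $M(G)$, and for any normal $2$-subgroup $N\norm G$ the Lyndon--Hochschild--Serre spectral sequence collapses (as $H^{>0}(N,k^\times)=0$), yielding $H^2(G,k^\times)\cong H^2(G/N,k^\times)$. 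Applying this to the normal $2$-subgroup of chain-preserving inner automorphisms inside $\Aut_\F(\sigma)$ produces a natural isomorphism $\A_\F^2(\sigma)\cong H^2(\Out_\F(\sigma),k^\times)\cong M(\Out_\F(\sigma))_{2'}$. Thus the whole functor, and with it both inverse limits, is governed by the odd Schur multipliers of the outer automorphism groups of $\F$-centric (radical) chains.

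Next I would reduce the index poset from $[S(\F^c)]$ to $[S(\F^{cr})]$. For this I would invoke the standard principle that these inverse limits are carried by the $\F$-centric radical subgroups, so that the inclusion $[S(\F^{cr})]\hookrightarrow[S(\F^c)]$ induces an isomorphism on $\lim\A_\F^2$ and every chain meeting the set of centric but non-radical subgroups may be discarded. This is what reduces the problem to the finite, and in fact small, poset of centric radical chains, and accounts for the displayed isomorphism in the statement.

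The technical heart is then the explicit classification, up to $\F$-conjugacy and uniformly in $q$, of the $\F$-centric radical subgroups of $S\in\Syl_2(\Spin_7(q))$ together with their automizers $\Out_\F(Q)$. Starting from the fusion system $\F_{\Spin_7(q)}$ and the single additional $\Sol(q)$-automorphism, I would determine each $\Out_\F(Q)$; these turn out to be small groups assembled from $2$-groups and sections such as $\Sigma_3$, $\GL_3(2)$ and $\ZZ/2\times\GL_3(2)$, consistent with the Weyl group $\ZZ/2\times\GL_3(2)$ of the associated exotic $2$-compact group $\mathrm{DI}(4)$. For a chain $\sigma$ of such subgroups, $\Out_\F(\sigma)$ is the stabilizer of the corresponding flag inside one of these automizers, hence again of this shape. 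The final step is a group-by-group check that every group occurring has trivial odd Schur multiplier (for instance $M(\GL_3(2))=\ZZ/2$ has trivial odd part, and a direct factor $\ZZ/2$ contributes nothing); consequently $\A_\F^2(\sigma)=0$ for every centric radical chain $\sigma$, giving $\lim_{[S(\F^{cr})]}\A_\F^2=0$ and, by the reduction, $\lim_{[S(\F^c)]}\A_\F^2=0$.

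I expect the main obstacle to be the classification itself: producing a complete, irredundant list of $\F$-centric radical subgroups and pinning down their automizers inside $\Spin_7(q)$-fusion together with the extra automorphism, with enough uniformity that the answer is visibly independent of $q$. Once that list is secured the cohomology is routine, since every automizer and every flag stabilizer is a small group with trivial $2'$-Schur multiplier, so no cancellation across chains is even needed. A secondary point requiring care is justifying the passage from $[S(\F^c)]$ to $[S(\F^{cr})]$ at the level of the honest inverse limit $\lim^0$, rather than merely for its higher-limit shadows.
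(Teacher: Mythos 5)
Your reduction of the functor to odd Schur multipliers and the passage from $[S(\F^c)]$ to $[S(\F^{cr})]$ (via the existence of a centric linking system and Libman's theorem, which is exactly how the paper gets the displayed isomorphism) are both fine. The genuine gap is in your final step: the claim that every automizer occurring has trivial odd Schur multiplier, so that $\A_\F^2$ vanishes termwise and ``no cancellation across chains is even needed,'' is false. First, your list of automizers is not what actually occurs: the $\F$-centric radical subgroups of $\Sol(q)$ have outer automorphism groups including $A_7$ or $S_7$, $S_6$, $S_5$, $\GL_4(2)$, $\GL_3(2)$, $S_3 \wr S_3$, $(C_3)^3 \rtimes (C_2 \times S_3)$ and $(C_3\times C_3)\overset{-1}{\rtimes}C_2$, not merely groups built from $S_3$ and $\GL_3(2)$. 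More importantly, in the smallest case $v_2(q^2-1)=3$ (the paper's $l=0$) two of these have \emph{nonzero} $H^2(-,k^\times)$: the subgroup $R$ has $\Out_\F(R)\cong A_7$, whose Schur multiplier is $C_6$ with odd part $C_3$, and the subgroup $QR$ has $\Out_\F(QR)\cong (C_3\times C_3)\overset{-1}{\rtimes}C_2$, which also has $H^2(-,k^\times)\cong C_3$ (the inversion lies in $\SL_2(3)$, so the determinant-twist class survives). Hence for $l=0$ the cochain group in degree zero is nonzero and the limit cannot be seen to vanish object by object.

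What is needed there — and what the paper supplies — is precisely a cancellation argument across chains: one shows $Q$, $R$, and $QR$ are weakly $\F$-closed, considers the length-one chains $(R<QR)$ and $(Q<QR)$, and proves that the restriction map $H^2(\Aut_\F(R),k^\times)\to H^2(\Aut_\F(R<QR),k^\times)$ is an isomorphism (the stabilizer identifies with $N_{A_7}(\gen{(1,2)(3,4),(1,3)(2,4)})$, which contains a Sylow $3$-subgroup of $A_7$, giving injectivity) while the map from $H^2(\Aut_\F(QR),k^\times)$ hits the same image; injectivity of the degree-zero coboundary then forces the limit to vanish. Your strategy does go through verbatim for $l>0$, where all the automizers ($S_7$, $S_6$, $S_5$, $\GL_4(2)$, $S_3\wr S_3$, etc.) indeed have vanishing $H^2(-,k^\times)$, but without the $l=0$ cancellation step the proof is incomplete for exactly one of the infinitely many isomorphism types of Benson--Solomon systems.
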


Theorem~\ref{t:solblocks} is shown by explicitly computing the $\F$-conjugacy
classes of centric radical subgroups along with their outer automorphism groups
in $\F$. The results of \cite[Section 10]{AschbacherChermak2010} go a long way
towards accomplishing such a task, but more details are required for the
present applications. In Section~\ref{s:solq} we refine the results of
\cite{AschbacherChermak2010} to prove the following.

\begin{Thm}\label{t:main}
Let $\F = \Sol(q)$. Representatives for the $\F$-conjugacy classes of
$\F$-centric radical subgroups, together with their $\F$-outer automorphism
groups, are listed in Tables \ref{t:1} and \ref{t:2}. 
\end{Thm}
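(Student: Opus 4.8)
The plan is to classify the $\F$-centric radical subgroups of $\Sol(q)$ up to $\F$-conjugacy, together with their outer automorphism groups, by refining the structural analysis already carried out in \cite[Section~10]{AschbacherChermak2010}. The starting point is the known global structure of the Benson-Solomon system: $S$ is a Sylow $2$-subgroup of $\Spin_7(q)$, and $\F = \Sol(q)$ is constructed so that the fusion of $2$-central elements is controlled by a family of subsystems isomorphic to the $2$-fusion system of $\Spin_7(q)$ but with an exotic extra fusion coming from the $\Sol$-automorphism of a central involution's centralizer. First I would recall from \cite{AschbacherChermak2010} the list of $\F$-centric subgroups that are candidates to be radical, and the reduction results there that cut the problem down to finitely many $\F$-conjugacy classes.

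The core of the argument is then a case-by-case verification. For each candidate subgroup $Q$, I would need to establish three things: that $Q$ is $\F$-centric (\ie\ $C_S(Q') \leq Q'$ for every $\F$-conjugate $Q'$ of $Q$), that $Q$ is $\F$-radical (\ie\ $O_p(\Out_\F(Q)) = 1$), and finally an explicit computation of $\Out_\F(Q) = \Aut_\F(Q)/\Inn(Q)$ as an abstract group. The radicality condition is the decisive filter: many $\F$-centric subgroups will have a nontrivial normal $p$-subgroup in their outer automorphism group and so must be discarded, while the survivors are exactly those whose automizers have a ``reductive'' structure. Here I would lean on the fact that $\Aut_\F(Q)$ is computed inside the ambient algebraic-group-theoretic data, so that $\Out_\F(Q)$ is typically built from groups of Lie type in small rank over $\FF_q$ or related fields, together with graph/field automorphisms.

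I expect the main obstacle to be the bookkeeping around the subgroups associated with the ``exotic'' part of the fusion, namely those $Q$ whose $\F$-automizers involve the extra $\Sol$-fusion rather than just the $\Spin_7(q)$-fusion. For these one cannot simply read off $\Aut_\F(Q)$ from a single finite group of Lie type; instead one must assemble $\Aut_\F(Q)$ from the interaction between the centralizer fusion systems and the gluing maps that define $\Sol(q)$, and then carefully verify both centricity and radicality in that larger automizer. A secondary difficulty is ensuring the list is complete up to $\F$-conjugacy and contains no redundancies: two subgroups that look distinct in $S$ may be fused in $\F$, so I would need to track $\F$-conjugacy carefully, using the control of fusion by the relevant subsystems to merge classes. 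Once each surviving $Q$ is identified and its $\Out_\F(Q)$ computed, the data are simply recorded in Tables~\ref{t:1} and \ref{t:2}, establishing the theorem; the uniformity in $q$ (up to the $2$-adic valuation condition) should emerge because the relevant outer automorphism groups depend on $q$ only through invariants that are constant on each isomorphism class of $\Sol(q)$.
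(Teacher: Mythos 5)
Your proposal has the right general flavor (refine \cite[Section~10]{AschbacherChermak2010}, then do a case-by-case check of centricity, radicality, and automizers), but it is missing the one idea that makes the computation possible, and it misidentifies the structure in which the automizers live. The paper does not compute $\Aut_\F(Q)$ ``inside the ambient algebraic-group-theoretic data'' or by assembling gluing maps. Instead it realizes $\F = \F_S(G)$ for the Aschbacher--Chermak amalgam $G = H *_B K$, where $H = \Spin_7(q)$ and $K$ is a specific finite group built from $N_{\SL_2(q^2)}(\SL_2(q)) \wr S_3$, and then invokes the trichotomy of \cite[Lemma~10.9]{AschbacherChermak2010} (Proposition~\ref{p:fcr}): up to $\F$-conjugacy, an $\F$-centric radical subgroup is either $A$ (with $\Out_\F(A) = \GL_4(2)$), or $C_S(E)$ (with $\Out_\F(C_S(E)) \cong \GL_3(2)$), or else its $G$-normalizer lies in $H$ or in $K$ and it is centric radical in $\H = \F_S(H)$ or $\K = \F_S(K)$ with $\Out_\F(P) = \Out_\H(P)$ or $\Out_\K(P)$. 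This converts a question about an exotic fusion system, where one has no direct grip on the morphism sets, into two computations in fusion systems of honest finite groups, followed by a fusion/redundancy check between the resulting lists. Your proposal acknowledges the difficulty of the ``exotic'' automizers but offers no concrete substitute for this reduction; without it, the case-by-case verification you describe cannot be carried out. You also never mention $K$ (or $\K = N_\F(U)$) at all: you describe the controlling subsystems as copies of the $\Spin_7(q)$ fusion system, whereas most rows of Tables~\ref{t:1} and \ref{t:2} (the subgroups of the form $Q_1Q_2Q_3$, $Q_1Q_2R_3\gen{\tau}$, etc.) are computed in $\K$, in terms of quaternion subgroups of the three $\SL_2(q)$ factors.

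Relatedly, your expectation that the outer automorphism groups are ``built from groups of Lie type in small rank over $\FF_q$ or related fields, together with graph/field automorphisms'' would lead the verification astray. The automizers occurring are $q$-independent finite groups: $S_3 \wr S_3$, $S_3 \times S_3$, $S_3 \wr C_2$, $(C_3)^3 \rtimes (C_2 \times S_3)$, $A_7$, $S_7$, $S_6$, $S_5$, $\GL_3(2)$, $\GL_4(2)$. They arise from $N_{\SL_2(q)}(Q_8)/Q_8 \cong S_3$ (Lemma~\ref{l:normsl2q}) and from permutation actions on orthogonal decompositions of the $7$-dimensional space (Proposition~\ref{p:ngpnleqk}), not from reductive groups over $\FF_q$. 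The only $q$-dependence is the dichotomy $l = 0$ versus $l > 0$ (after normalizing $q = 5^{2^l}$ via \cite[Theorem~3.4]{ChermakOliverShpectorov2008}), which your proposal does not isolate: it governs whether $\Out_\F(R)$ is $A_7$ or $S_7$, whether the class $R^{**}$ exists, and whether $R_0$ is normal in $\K$ (which is why the paper runs separate arguments, Propositions~\ref{p:kl=0}/\ref{p:sol3} for $l = 0$ and Propositions~\ref{p:k}/\ref{p:h} for $l > 0$). Finally, the paper's contribution also includes corrections to \cite{AschbacherChermak2010} (the extra condition $s^2 \in P_0$ in Proposition~\ref{p:10.2}, see Remark~\ref{r:102}, and the identification of $O_2(N_\H(E))$ in Proposition~\ref{p:ngpnleqk}(d)); a proof that takes the statements of \cite[Lemmas~10.2, 10.7]{AschbacherChermak2010} at face value, as yours implicitly does, would inherit these inaccuracies.
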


\begin{Thm}\label{t:solq0}
The number of weights in the unique pair of Theorem~\ref{t:solblocks} is
\[
\mathbf{w}(\Sol(q),0) = 12,
\]
independently of $q$. 
\end{Thm}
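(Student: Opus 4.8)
The plan is to directly evaluate the weight-counting formula \eqref{e:weights}. Since Theorem~\ref{t:solblocks} tells us that the unique Külshammer-Puig class is $\alpha = 0$, every twist is trivial, so $k_{\alpha_Q}\Out_\F(Q) = k\Out_\F(Q)$ is an honest (untwisted) group algebra. Thus the formula collapses to
\[
w(\Sol(q),0) = \sum_{Q \in \F^{cr}/\F} z(k\Out_\F(Q)),
\]
and the task reduces to summing, over the finitely many conjugacy classes of centric radical subgroups catalogued in Theorem~\ref{t:main}, the number $z(k\Out_\F(Q))$ of projective simple $k\Out_\F(Q)$-modules.

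The key input is the classification in Tables~\ref{t:1} and \ref{t:2}: these give a complete and explicit list of representatives $Q$ together with each outer automorphism group $\Out_\F(Q)$. So first I would read off this list. Then, for each group $L := \Out_\F(Q)$ appearing, I would compute $z(kL)$, the number of projective simple $kL$-modules in characteristic $p=2$. The point is that a simple $kL$-module is projective if and only if it lies in a block of defect zero, equivalently if and only if its dimension is divisible by the full $2$-part $|L|_2$ of $|L|$. Concretely, $z(kL)$ equals the number of irreducible ordinary characters $\chi$ of $L$ whose degree $\chi(1)$ has $2$-adic valuation equal to $v_2(|L|)$ — these are exactly the characters of $2$-defect zero, which remain irreducible and projective upon reduction mod $2$. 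So for each $L$ in the tables I would locate its character degrees (or equivalently its $2$-blocks of defect zero) and count those of maximal $2$-power degree.

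The bulk of the work is therefore a finite case analysis governed entirely by the structure of the groups $\Out_\F(Q)$. I expect these to be small and well-understood groups — things like products of cyclic, symmetric, or linear groups, whose block theory and character degrees are standard — so each individual computation of $z(kL)$ should be routine. For many $Q$ the group $\Out_\F(Q)$ will have nontrivial normal $2$-subgroup or a cyclic Sylow $2$-subgroup of positive rank, in which case there are no defect-zero characters and the contribution $z(kL)=0$; such terms drop out of the sum. The remaining nonzero contributions come from the comparatively few $Q$ whose outer automorphism group has a self-centralizing or otherwise exceptional $2$-structure admitting defect-zero characters. Tallying these contributions should yield the total $12$.

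The main obstacle, and the only genuinely nonroutine point, is ensuring that the list in Theorem~\ref{t:main} is used correctly: the sum must be taken over $\F$-\emph{conjugacy} classes of centric radical subgroups, so I must be certain that Tables~\ref{t:1} and \ref{t:2} provide a set of representatives with no redundancy and no omissions, and that the displayed $\Out_\F(Q)$ is the full $\F$-outer automorphism group (rather than, say, the outer automorphism group computed in some overgroup). Since the value $w(\Sol(q),0)=12$ is asserted to be independent of $q$, a secondary check is that neither the list of relevant groups $L$ nor their defect-zero character counts vary with $q$ within an isomorphism type of $\Sol(q)$; this should follow from the uniform description of the tables, but it is the place where a hidden $q$-dependence could in principle sneak in, and so deserves explicit verification.
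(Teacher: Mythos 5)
Your overall reduction is the same as the paper's: since $\alpha=0$ by Theorem~\ref{t:solblocks}, the count collapses to $\sum_Q z(k\Out_\F(Q))$ over the representatives in Tables~\ref{t:1} and \ref{t:2}, and $z(kL)$ equals the number of $2$-blocks of defect zero of $L$. The paper evaluates these numbers via normal $2$-complements, Steinberg modules for the groups with a BN-pair, and Robinson's double-coset theorem for the remaining cases; your plan to count ordinary irreducible characters $\chi$ with $v_2(\chi(1))=v_2(|L|)$ is an equally valid route. The strategy is sound; the error lies in the criterion you propose for deciding which terms vanish.

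Your claim that a cyclic Sylow $2$-subgroup of positive rank forces $z(kL)=0$ is false, and applying it would wreck the tally. The group $S_3$ has cyclic Sylow $2$-subgroup $C_2$, yet its degree-$2$ character has $2$-defect zero, so $z(kS_3)=1$; likewise $(C_3\times C_3)\overset{-1}{\rtimes}C_2$ contributes $z=4$. These are exactly the entries that dominate the sum: $S_3$ occurs three times as $\Out_\F(Q)$ in Table~\ref{t:1} and five times in Table~\ref{t:2}, and discarding those terms gives totals of $9$ and $7$ rather than $12$. (Your other criterion, a nontrivial normal $2$-subgroup, can never occur here: $Q$ being $\F$-radical means precisely that $O_2(\Out_\F(Q))=1$.) Conversely, the groups that genuinely contribute $0$ --- $S_3\wr C_2$, $S_5$, $A_7$, $S_7$ --- have trivial $O_2$ and non-cyclic Sylow $2$-subgroups, so your criteria do not detect them; each requires an honest computation, either of character degrees or, as in the paper, via Robinson's theorem applied to defect-zero classes and $S$-$S$ double cosets. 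Finally, the uniformity in $q$ you hope for fails at the level of the tables: the lists genuinely differ between $l=0$ and $l>0$ (for instance $\Out_\F(R)\cong A_7$ when $l=0$ but $S_7$ when $l>0$, and only the $l=0$ table contains a subgroup, $QR$, contributing $4$), so the two cases must be tallied separately, and it is only the two totals that agree at $12$.
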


We prove this result in Section \ref{s:blocks} by explicitly computing the
quantity $z(k\Out_\F(Q))$ for each of the groups $Q$ appearing in Tables
\ref{t:1} and \ref{t:2} of Theorem~\ref{t:main}. 

Beyond the weight conjecture, and assuming its validity, we have in mind other
counting questions that can be considered for K\"{u}lshammer-Puig pairs without
reference to a group or a block. For example, Malle and Robinson recently
conjectured that if $b$ is a $p$-block associated to a finite group $G$ then
the number of simple $kG$-modules in $b$ should be bounded by $p^{s(S)}$, where
$S$ is a defect group of $b$ and $s(S)$ denotes the sectional rank of $S$,
namely the largest rank of an elementary abelian section
\cite{MalleRobinson2017}. Moreover, they verified their conjecture in a large
number of cases where the weight conjecture holds. In Lemma~\ref{L:sectrank},
we observe that the sectional rank of $S$ is $6$, and so the following conjecture,
which was suggested to us by Kessar and Linckelmann, also holds easily for
$\Sol(q)$. 

\begin{Conj}\label{c:kesslinck}
Let $(\F,\alpha)$ be a K\"ulshammer-Puig pair, where $\F$ is a saturated fusion
system on $S$. Then $\mathbf{w}(\F,\alpha) \le p^{s(S)}$.
\end{Conj}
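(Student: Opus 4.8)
The plan is to treat the weight count intrinsically, via the defining formula $w(\F,\alpha) = \sum_{Q} z(k_{\alpha_Q}\Out_\F(Q))$ of \eqref{e:weights}, and to bound the total by $p^{s(S)}$ through a local-to-global argument. Since each summand counts the projective (equivalently, $p$-defect zero) simple modules of a twisted group algebra, the first task is a \emph{local bound}: for a fixed $\F$-centric radical $Q$, control $z(k_{\alpha_Q}\Out_\F(Q))$ in terms of an elementary abelian section of $S$ attached to $Q$. The second, and decisive, task is a \emph{global step}: to show that these local contributions do not merely sum to something of the shape $p^{s(S)}$, but in fact inject into a single elementary abelian section of maximal rank.

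For the local bound, fix $Q$ and write $\bar H = \Out_\F(Q)$. Radicality gives $O_p(\bar H)=1$, and saturation gives $\bar T := \Aut_S(Q)/\Inn(Q)\in\Syl_p(\bar H)$, so every projective simple $k_{\alpha_Q}\bar H$-module has $k$-dimension divisible by $|\bar T|$, and their number is at most the number of $\alpha_Q$-regular $p$-regular classes of $\bar H$. The cleanest case is the controlled one: if $S$ is elementary abelian then $S$ is the unique centric radical, $\bar H = \Aut_\F(S)$ is a $p'$-subgroup of $\GL_r(p)$ with $r = s(S)$, and $z(k_{\alpha_Q}\bar H)$ is simply the number of simple $k_{\alpha_Q}\bar H$-modules. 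Applying the $k(GV)$-theorem to $V = S$ with $\bar H$ acting faithfully and coprimely gives $k(V\rtimes\bar H)\le |V| = p^{r}$; since $\bar H$ is a quotient of $V\rtimes\bar H$ one has $k(\bar H)\le p^{r}$, and the bound holds outright. In general I would run the same argument on a minimal $\bar H$-invariant elementary abelian section inside $\Omega_1(Z(Q))$, obtaining a bound $z(k_{\alpha_Q}\bar H)\le p^{r(Q)}$, where $r(Q)$ is the rank of an elementary abelian section of $S$ naturally attached to $Q$.

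The hard part is the global step. Summing the local bounds crudely yields $\sum_Q p^{r(Q)}$, which dwarfs $p^{s(S)}$; the conjecture can hold only because distinct centric radicals contribute ``disjoint'' weights. The real content is therefore to construct an injection from the set of $\F$-classes of weights $(Q,V)$ into a fixed elementary abelian section of $S$ of rank $s(S)$, built intrinsically from the poset of centric radicals and the compatible family $\alpha$. I expect this disjointness to be the main obstacle, and it is exactly the point at which the statement stops being routine: there is at present no classification of saturated fusion systems, nor of the groups $\Out_\F(Q)$ that can arise, so the injection cannot be read off any list and must be produced from the structure of $\F$ itself.

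Two partial routes are worth pursuing. When $(\F,\alpha)$ is the K\"ulshammer-Puig pair of a genuine $p$-block $B$ with defect group $S$, the statement follows from Alperin's Weight Conjecture together with the Malle-Robinson bound $l(B)\le p^{s(S)}$; a natural first step is thus to transport the Malle-Robinson argument into the abstract setting through the weight formula, and only then to extend it to exotic pairs via the intrinsic injection above. As a consistency check at the other extreme, for $\F = \Sol(q)$ the bound is immediate: Lemma~\ref{L:sectrank} gives $s(S)=6$, while Theorem~\ref{t:solq0} gives $w(\Sol(q),0)=12\le 2^6$.
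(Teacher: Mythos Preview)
The statement in question is labeled \texttt{Conj} in the paper, not \texttt{Thm}: it is an open conjecture, and the paper makes no attempt to prove it in general. The only ``proof'' the paper offers is the verification for $\F=\Sol(q)$, which you yourself reproduce in your final sentence: $s(S)=6$ by Lemma~\ref{L:sectrank} and $w(\Sol(q),0)=12\le 2^6$ by Theorem~\ref{t:solq0}. That single line is the entire content of the paper's treatment of this conjecture.

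Your proposal, by contrast, is a sketch of a possible attack on the general statement, and you are candid that it is incomplete. The local bound via $k(GV)$ is reasonable in the controlled (elementary abelian $S$) case, but the ``global step'' --- an injection from the set of $\F$-weights into a single elementary abelian section of rank $s(S)$ --- is not a proof at all; it is a hoped-for construction whose existence is precisely the content of the conjecture. You correctly identify that summing the local bounds $p^{r(Q)}$ over all centric radicals overshoots $p^{s(S)}$ badly, and you offer no mechanism for the required disjointness beyond ``I expect''. Moreover, even the block-theoretic route you mention (AWC plus Malle--Robinson) is conditional on two conjectures, so it does not yield an unconditional proof even in the non-exotic case. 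In short: what you have written is a research plan for an open problem, not a proof, and the paper does not purport to contain one either.
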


This conjecture is just one small example in a host of other conjectures
which are certain purely local analogues of the various local-to-global
conjectures in the modular representation theory of finite groups. The local
conjectures by their nature do not discriminate between realizable and exotic
K\"ulshammer-Puig pairs. They are discussed more fully in a sequel to this
paper \cite{kessar2018weight}. 

\subsection*{Outline, and notation for the tables}
After recalling certain initial results about fusion systems and the $2$-local
structure of $\SL_2(q)$, we set up in Section~\ref{s:solq} notation for working
in the Benson-Solomon systems and identify the important subgroups of the
Aschbacher-Chermak free amalgamated product which realizes the systems.
Section~\ref{ss:abovetorus} provides an initial classification of some centric
radical subgroups, namely the centric radical subgroups lying above the
$2$-torsion in a maximal torus.  

Section~\ref{s:solqcrel} contains the proof of Theorem~\ref{t:main}, where the
smallest Benson-Solomon system is handled separately (Subsection~\ref{s:l=0})
from the larger ones (Subsection~\ref{s:kcr}). The results are summarized in
Tables~\ref{t:1} and \ref{t:2}. Those tables give a list of subgroups whose
notation was fixed previously in Notation~\ref{N:Knotation1},
Notation~\ref{N:Knotation2}, Subsection~\ref{ss:standardseq},
\eqref{e:Qforl=0}, or Notation~\ref{N:R**}.

Theorem~\ref{t:solq0} is proved in Section~\ref{s:blocks}.  Finally, in
Section~\ref{s:KP}, we compute the Schur multipliers of the outer automorphism
groups to give a proof of Theorem~\ref{t:solblocks}. 

\subsection*{Acknowledgements}
We thank Markus Linckelmann and Radha Kessar for encouragement, Geoffrey
Robinson for comments on an earlier version, and Dave Benson and Ian Leary for
helpful conversations concerning Section~\ref{s:KP}.   We express our gratitude
to the referee whose thorough reading and many suggestions resulted in numerous
improvements to the paper. The first named author thanks the European
Commission for funding through a Marie Curie Fellowship, without which this
work would have not materialized.

\section{The Benson-Solomon fusion systems}\label{s:solq}

\subsection{Fusion system preliminaries}\label{ss:prelim}

Throughout this paper, our group-theoretic nomenclature is standard and follows
\cite{Wilson2009}, and we are usually consistent with the fusion-theoretic
terminology and notation of \cite{AschbacherKessarOliver2011}. One exception to
this is that we use exponential notation for images of subgroups and elements
under a morphism in a fusion system, as described below.

A \emph{fusion system} on a finite $p$-group $S$ is a category with objects the
subgroups of $S$, and with morphisms injective group homomorphisms subject to
two weak axioms. The standard example of a fusion system is that of a finite
group $G$ with Sylow $p$-subgroup $S$, where the morphisms are the conjugation
homomorphisms between subgroups of $S$ induced by elements of the group $G$,
and which is denoted $\F_S(G)$. Due to the validity of Sylow's Theorem in $G$
and its $p$-local subgroups, the standard example satisfies two additional
\emph{saturation axioms}, the Sylow and Extension axioms
\cite[Definition~1.2]{BrotoLeviOliver2003}.  All fusion systems in this paper
are assumed to be (or known already to be) saturated unless otherwise stated,
and we will sometimes drop that adjective and speak simply of a fusion system
when there is no cause for confusion.

For this subsection, we fix a saturated fusion system $\F$ over the $p$-group
$S$.  By analogy with the standard example, two subgroups of $S$ are said to be
$\F$-\textit{conjugate} if they are isomorphic in the category $\F$.  For a
morphism $\phi\colon P \to Q$ in $\F$, we write $P^\phi$ for the image of
$\phi$. Similarly, $x^\phi$ denotes the image of an element $x$ under a
morphism whose domain contains $x$.

\begin{Def}\label{d:centrad} Fix a subgroup $P \leq S$. We say that $P$ is
\begin{itemize} \item[(a)] \textit{fully $\F$-normalized} if $|N_S(P)| \geq
|N_S(Q)|$ whenever $Q$ is $\F$-conjugate to $P$, \item[(b)]
\textit{$\F$-centric} if  $C_S(Q)=Z(Q)$ for each $\F$-conjugate $Q$ of $P$,
\item[(c)] \textit{$\F$-radical} if $O_p(\Out_\F(P))=1$, \item[(d)]
\textit{$\F$-centric radical} if it is both $\F$-centric and $\F$-radical, and
\item[(e)] \textit{weakly $\F$-closed} if $P$ is the only $\F$-conjugate of $P$,
\item[(f)] \textit{strongly $\F$-closed} if each $\F$-conjugate of a subgroup
of $P$ is contained in $P$. 
\end{itemize} Denote by $\F^c$, $\F^r$, and $\F^{cr}$ the collection
of $\F$-centric, $\F$-radical, and $\F$-centric radical subgroups of $S$,
respectively.  \end{Def}

The collections $\F^c$, $\F^{r}$ and $\F^{cr}$ are all closed under
$\F$-conjugacy. Also, the $\F$-centric subgroups are closed under passing to
overgroups.

\begin{Rem} \label{r:radical} Let $G$ be a finite group with Sylow $p$-subgroup
$S$.  A $p$-subgroup $P$ of $G$ is said to be $p$-\emph{radical} in $G$ if
$O_p(N_G(P)/P) = 1$. By contrast, a subgroup $P$ is $\F_S(G)$-radical if and
only if $O_p(N_G(P)/PC_G(P)) = 1$.  The collection of $p$-radical subgroups of
$G$ contained in $S$ does \emph{not} coincide, in general, with the collection
of $\F_S(G)$-radical subgroups. 

For example, let $p = 3$ and $G = G_1 \times G_2$ with $G_i \cong D_6$.  The
subgroup $P = S \cap G_1$ has order $3$ with $N_G(P)/P \cong C_2 \times D_6$,
so $P$ is not $3$-radical in $G$.  However, $\Out_{\F_S(G)}(P) = N_G(P)/PC_G(P)
\cong C_2$, so $P$ is $\F_S(G)$-radical. Conversely, take $p = 2$ but instead
$G = D_{24}$, and $P$ of order $4$ in the cyclic maximal subgroup. Then
$\Out_{\F_S(G)}(P) \cong C_2$ so $P$ is not $\F_S(G)$-radical, but $N_G(P)/P
\cong D_{6}$ so $P$ is $2$-radical in $G$. 

This distinction is important in Lemma~\ref{l:abovewc} below, where both
concepts appear simultaneously. It is also relevant in Chevalley groups $G =
G(q)$ with $q$ odd which have an element in the Weyl group inverting a split
maximal torus. When such a torus has nontrivial odd order normal subgroup
(often the case), a Sylow $2$-subgroup $T$ of such a torus is $2$-radical in
$G$ but not radical in $\F_S(G)$, where $S$ is a Sylow $2$-subgroup of $G(q)$
containing $T$.  This situation occurs for example when $G(q) = \Spin_7(q)$,
$q$ odd, $q \neq 3, 5$.  \end{Rem}

\begin{Def} Fix a subgroup $P \leq S$.  \begin{itemize} \item[(a)] The
\textit{normalizer} $N_\F(P)$ of $P$ is the fusion system on $N_S(P)$
consisting of those morphisms $\phi\colon Q \to R$ in $\F$ for which there
exists an extension $\tilde{\phi}\colon PQ \to PR$ of $\phi$ in $\F$ such that
$P^{\tilde{\phi}} = P$.  \item[(b)] The \textit{centralizer} $C_\F(P)$ of $P$
is the fusion system on $C_S(P)$ consisting of those morphisms $\phi\colon Q
\to R$ in $\F$ for which there exists an extension $\tilde{\phi}\colon PQ \to
PR$ of $\phi$ in $\F$ such that the restriction $\tilde{\phi}|_P$ is the
identity on $P$.  \item[(c)] The subgroup $P\leq S$ is \emph{normal} in $\F$ if
$\F = N_\F(P)$.
\item[(d)] $\F$ is \emph{constrained} if $\F$ has a centric normal
subgroup.
\end{itemize} \end{Def}

These centralizer and normalizer fusion systems are not always saturated, but
they are both saturated provided $P$ is fully $\F$-normalized.  

\begin{Lem}\label{l:fusion-system-of-the-center} If $P$ is $\F$-centric, then
$C_\F(P) = \F_{Z(P)}(Z(P))$.  \end{Lem} \begin{proof} Assume that $P$ is
$\F$-centric.  The centralizer system $C_\F(P)$ is a fusion system over the
abelian group $C_S(P) = Z(P)$, and $Z(P)$ is normal in $C_\F(P)$ from the
definitions. As each morphism between subgroups of $Z(P)$ in $C_\F(P)$ extends
to act as the identity on $P$, each such morphism is an identity map.
\end{proof}

\begin{Lem}\label{l:normcentrad} Suppose that $P \leq S$ is normal in $\F$.
Then $P$ is contained in every $\F$-centric radical subgroup.  \end{Lem}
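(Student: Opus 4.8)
The plan is to show that any subgroup $P$ normal in $\F$ lies inside every $\F$-centric radical subgroup $Q$, by arguing that if $P \not\leq Q$ then $Q$ fails to be $\F$-radical. The key structural fact to exploit is that normality of $P$ in $\F$ means every morphism in $\F$ extends to one stabilizing $P$; in particular, $\F = N_\F(P)$, so each $\varphi \in \Aut_\F(Q)$ extends to an automorphism $\tilde\varphi \in \Aut_\F(PQ)$ with $P^{\tilde\varphi} = P$. I would first fix a fully $\F$-normalized $\F$-centric radical subgroup $Q$ (it suffices to treat one representative per conjugacy class, since $\F^{cr}$ and the property of containing $P$ are both closed under $\F$-conjugacy once we know $P^\varphi = P$ for all relevant $\varphi$), and set $R = PQ$. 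Because $P$ is normal, $P \leq S$ centralizes nothing extra, but crucially $Q \trianglelefteq R$ need not hold a priori; however $Q \leq N_{R}(Q)$, and the real object of interest is the subgroup of $\Aut_\F(Q)$ coming from conjugation inside $R$.

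First I would observe that since $P$ is normal in $\F$, restriction gives a well-defined homomorphism $\Aut_\F(R) \to \Aut_\F(Q)$ whenever $Q \trianglelefteq R$; so the natural move is to take $R = N_S(Q)$ rather than $PQ$, using that $Q \trianglelefteq N_S(Q)$ and that $P \leq N_S(Q)$ because $P \trianglelefteq S$. Then $PQ/Q$ is a nontrivial normal $p$-subgroup of $N_S(Q)/Q$ whenever $P \not\leq Q$ (nontrivial because $P \not\leq Q$, a $p$-group quotient because $P$ is a $p$-group). The core of the argument is to promote this into a nontrivial normal $p$-subgroup of $\Out_\F(Q) = \Aut_\F(Q)/\Inn(Q)$, contradicting $\F$-radicality. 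Concretely, I would consider the image $\bar{A}$ in $\Out_\F(Q)$ of the group $A = \{ c_x|_Q : x \in PQ \} \leq \Aut_\F(Q)$ of conjugations by elements of $PQ$; this $A$ is generated by $\Inn(Q)$ together with conjugation maps $c_x$ for $x \in P$, so $\bar A$ is a $p$-subgroup of $\Out_\F(Q)$, and it is nontrivial precisely because some $x \in P \setminus Q$ acts nontrivially modulo $Q$ (here $\F$-centricity of $Q$, giving $C_S(Q) = Z(Q) \leq Q$, guarantees that $P \not\leq Q$ forces a nontrivial outer action, since an element of $P \setminus Q$ centralizing $Q$ would lie in $C_S(Q) \subseteq Q$, a contradiction).

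The decisive step is normality of $\bar A$ in $\Out_\F(Q)$, and this is where normality of $P$ in $\F$ does the work: for any $\varphi \in \Aut_\F(Q)$, extend it to $\tilde\varphi \in \Aut_\F(PQ)$ with $P^{\tilde\varphi} = P$, and then compute $\varphi^{-1} c_x \varphi = c_{x^{\tilde\varphi}}$ as automorphisms of $Q$, for $x \in P$; since $x^{\tilde\varphi} \in P$ again, the conjugate of a generator of $A$ lands back in $A$, so $A \trianglelefteq \Aut_\F(Q)$ and hence $\bar A \trianglelefteq \Out_\F(Q)$. Thus $\bar A$ is a nontrivial normal $p$-subgroup, so $O_p(\Out_\F(Q)) \neq 1$, contradicting $Q \in \F^{cr}$. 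I expect the main obstacle to be bookkeeping the extension-and-restriction carefully: one must justify that $\tilde\varphi$ may be taken to normalize $P$ and to genuinely extend $\varphi$ on $Q$ (this is exactly the content of $\F = N_\F(P)$), and that the conjugation formula $\varphi^{-1} c_x \varphi = c_{x^{\tilde\varphi}}$ holds as an identity of maps $Q \to Q$ rather than merely up to $\Inn(Q)$. Once that identity is verified, normality of $\bar A$ is immediate, and reducing to a fully normalized representative at the start ensures $\Out_\F(Q)$ and $O_p$ are computed in the saturated setting where these notions behave well.
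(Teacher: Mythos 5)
Your overall strategy is the same as the paper's: use $\F = N_\F(P)$ to extend each $\varphi \in \Aut_\F(Q)$ to $PQ$ preserving $P$, deduce via the formula $\varphi^{-1}c_x\varphi = c_{x^{\tilde\varphi}}$ that the conjugation automorphisms arising from $P$ form a normal $p$-subgroup of $\Aut_\F(Q)$, and then combine radicality with centricity. However, there is a genuine error in your setup: you assert that $P \leq N_S(Q)$ ``because $P \trianglelefteq S$.'' This reverses the direction of normalization. Normality of $P$ in $S$ says that every element of $S$ (in particular of $Q$) normalizes $P$, so that $PQ$ is a subgroup with $P \trianglelefteq PQ$; it does \emph{not} say that elements of $P$ normalize $Q$. (For instance, in $S = D_8$ the cyclic subgroup $P$ of order $4$ is normal in $S$ but does not normalize a noncentral subgroup $Q$ of order $2$.) Consequently your set $A = \{\, c_x|_Q : x \in PQ \,\}$ is not well defined as a subgroup of $\Aut_\F(Q)$, since $c_x$ need not carry $Q$ into $Q$, and the step producing ``$PQ/Q \trianglelefteq N_S(Q)/Q$'' collapses with it.

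The repair is exactly what the paper does: work with $\Aut_{PQ}(Q)$, the automorphisms of $Q$ induced by conjugation by elements of $N_{PQ}(Q)$. Your extension-and-restriction computation survives verbatim, because $\tilde\varphi$ preserves $PQ$ and $Q$, hence also $N_{PQ}(Q)$; so $\Aut_{PQ}(Q) \trianglelefteq \Aut_\F(Q)$, and since it is a $p$-group, radicality of $Q$ forces $\Aut_{PQ}(Q) \leq \Inn(Q)$. Now if $P \not\leq Q$, then $Q < PQ$, so $Q < N_{PQ}(Q)$ by normalizer growth in the $p$-group $PQ$; choosing $x \in N_{PQ}(Q) \setminus Q$, the inclusion $\Aut_{PQ}(Q) \leq \Inn(Q)$ gives $c_x|_Q = c_y|_Q$ for some $y \in Q$, whence $xy^{-1} \in C_S(Q) = Z(Q) \leq Q$ by centricity, so $x \in Q$, a contradiction. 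Note that this $xy^{-1}$ argument also fixes a second, smaller gap in your write-up: your parenthetical only rules out $c_x|_Q = \mathrm{id}$ (an element of $P \setminus Q$ \emph{centralizing} $Q$), whereas what you need for nontriviality of the image in $\Out_\F(Q)$ is to rule out $c_x|_Q$ being \emph{inner}, which is precisely what the displaced element $xy^{-1}$ accomplishes.
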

\begin{proof} Let $Q \in \F^{cr}$. Then $\Aut_{PQ}(Q)$ is normal in
$\Aut_\F(Q)$, and so $\Aut_{PQ}(Q) \leq \Inn(Q)$ since $Q$ is radical. Then $P
\leq PQ \leq QC_S(Q) = Q$ with the equality because $Q$ is centric.
\end{proof}

The next two lemmas give applications of the Extension axiom.  The second is
useful for locating the $\F$-centric radicals that contain a given weakly
$\F$-closed subgroup.

\begin{Lem}\label{l:extfn} Let $P' \leq S$ be fully $\F$-normalized, and let
$P$ be a subgroup of $S$ which is $\F$-conjugate to $P'$. Then there exists a
morphism $\alpha \in \Hom_{\F}(N_S(P), N_{S}(P'))$ such that $P^\alpha = P'$.
\end{Lem} \begin{proof} See \cite[I.2.6(c)]{AschbacherKessarOliver2011}.
\end{proof}

\begin{Lem}\label{l:abovewc} Let $W$ be an $\F$-centric and weakly $\F$-closed
subgroup of $S$. For any subgroup $P$ of $S$ containing $W$, restriction
induces an isomorphism \[ \Aut_\F(P)/\Aut_W(P) \longrightarrow
N_{\Out_\F(W)}(\Out_P(W)) \] and therefore an isomorphism \[ \Out_\F(P)
\longrightarrow N_{\Out_\F(W)}(\Out_P(W))/\Out_P(W).  \] Hence, the map $P
\mapsto \Out_P(W)$ is a bijection between the collection of $\F$-centric
radical subgroups containing $W$ and the collection of subgroups of $\Out_S(W)$
which are $p$-radical in the group $\Out_\F(W)$.  \end{Lem} \begin{proof}
Consider the restriction map $\rho\colon \Aut_\F(P) \to
N_{\Aut_\F(W)}(\Aut_P(W))$, under which $\Aut_W(P)$ maps onto $\Inn(W)$ and
under which $\Inn(P)$ maps onto $\Aut_P(W)$.  Since $W$ is weakly closed, it is
fully $\F$-normalized by Lemma~\ref{l:extfn}. A direct application of the
extension axiom \cite[Defintion~1.2(II)]{BrotoLeviOliver2003} then gives that
$\rho$ is surjective.  Since $W$ is $\F$-centric, the centralizer in $\F$ of
the centric subgroup $W$ is the fusion system of $Z(W)$ by
Lemma~\ref{l:fusion-system-of-the-center}, so the kernel of $\rho$ is
$\Aut_{Z(W)}(P)$, which is contained in $\Aut_W(P) \subseteq \Inn(P)$. The
induced map \[ \Aut_\F(P)/\Aut_W(P) \longrightarrow
N_{\Aut_\F(W)}(\Aut_P(W))/\Aut_W(W) \cong N_{\Out_\F(W)}(\Out_P(W)) \] is an
isomorphism, and therefore upon factoring by $\Aut_P(P)/\Aut_W(P)$, the induced
map \begin{eqnarray} \label{e:outs-abovewc} \Out_\F(P) \longrightarrow
N_{\Aut_\F(W)}(\Aut_P(W))/\Aut_P(W) \cong N_{\Out_\F(W)}(\Out_P(W))/\Out_P(W)
\end{eqnarray} is an isomorphism. 

Observe that $W$ is normal in $S$ because it is weakly $\F$-closed. So
$\Out_P(W) \cong P/W$ since $C_S(W) \leq W$. The map $P \mapsto \Out_P(W)$ is
therefore a bijection between the subgroups containing $W$ and the subgroups of
$\Out_S(P)$. By \eqref{e:outs-abovewc}, $\Out_\F(P)$ corresponds to
$N_{\Out_\F(W)}(\Out_P(W))/\Out_P(W)$ under the bijection, so $P$ is
$\F$-radical if and only if $\Out_P(W)$ is $p$-radical in the group
$\Out_\F(W)$ (Remark~\ref{r:radical}). The last statement now follows because
the collection of $\F$-centric subgroups is closed under passing to overgroups.
\end{proof}

\subsection{Quaternion groups and the $2$-local structure of
$\SL_2(q)$}\label{s:quat} It will be convenient to recall here standard facts
about the $2$-local structure of $\SL_2(q)$, where $q$ is an odd prime power.
For reasons that will become apparent in a moment, we set $l \geq 0$ and take
$q = q_l = 5^{2^{l}}$ for simplicity of exposition.  Given this notation,
$\SL_2(q)$ has generalized quaternion Sylow $2$-subgroups of order $2^{l+3}$,
and this can be seen as follows.  First, the size of a Sylow $2$-subgroup can
be deduced from the order $q(q-1)(q+1)$ of $\SL_2(q)$, together with the fact
that the $2$-adic valuation $v_2(5^{2^l}-1)$ is $l+2$. 
By the choice of $q$, the multiplicative group $\mathbb{F}_q^\times$ contains a
primitive root of unity $\omega$ of order $2^{l+2}$. Thus, \[ a:=\left(
\begin{smallmatrix} \omega & 0  \\ 0 & \omega^{-1} \\ \end{smallmatrix} \right)
\quad \text{and} \quad  b:=\left( \begin{smallmatrix} 0 & -1  \\ 1 & 0  \\
\end{smallmatrix} \right) \] generate a Sylow $2$-subgroup of $\SL_2(q)$ by
order considerations. Since $a$ and $b$ satisfy the relations
\begin{equation}\label{eq:quat} a^{2^{l+2}}=b^4=1, \quad a^{2^{l+1}}=b^2, \quad
b^{-1}ab=a^{-1} \end{equation} we see that $R:=\langle a,b \rangle$ is a
generalized quaternion group of order $2^{l+3}$.  The following lemma records
some basic facts about the subgroup structure of a generalized quaternion
group. 

\begin{Lem}\label{l:quatprop} The following hold: 
\begin{itemize} 
\item[(a)] each element of $R$ is of the form $a^ib^j$ with $0 \le i \le
2^{l+2}-1$ and $0 \le j \le 1$; 
\item[(b)] each element in $R \backslash \langle a \rangle$ is of order 4;
\item[(c)] $a^ib$ is conjugate to $a^jb$ if and only if $i \equiv j \mod 2$,
where $0 \le i,j \le 2^{l+2}-1$; 
\item[(d)] the set $\Q$ of subgroups of $R$ isomorphic with $Q_8$ is given by
$\langle a^{2^l},a^ib \rangle$ with $0 \le i \le 2^{l+2}-1$; 
\item[(e)] when $l > 0$, there are two conjugacy classes of
$Q_8$-subgroups of size $2^{l-1}$ represented by $Q:=\langle a^{2^l},b \rangle$
and $Q':=\langle a^{2^l},ab \rangle$; and 
\item[(f)] when $l > 0$, $N_S(Q)=\langle Q,a^{2^{l-1}} \rangle$ and
$N_S(Q')=\langle Q',a^{2^{l-1}} \rangle$.  
\end{itemize} 
\end{Lem}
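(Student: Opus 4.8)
The plan is to treat Lemma~\ref{l:quatprop} as a chain of elementary computations internal to the generalized quaternion group $R=\gen{x,y}$, working only from the presentation \eqref{eq:quat}. The single consequence I will use throughout is that conjugation by $y$ inverts $\gen{x}$, i.e.\ $y x^i y^{-1}=x^{-i}$, equivalently $y x^i = x^{-i}y$ for all $i$. Part (a) is then immediate: $\gen{x}$ has index $2$ and $y\notin\gen{x}$, so $R=\gen{x}\sqcup\gen{x}y$. For (b), given $z=x^i y$ one computes $z^2=x^i(yx^iy^{-1})y^2=y^2=x^{2^{l+1}}$, the unique central involution, so $z$ has order $4$.

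For (c) the heart of the matter is the conjugation action on the coset $\gen{x}y$: conjugating $x^iy$ by $x$ gives $x^{i+2}y$, and by $y$ gives $x^{-i}y$. Both generators, hence all of $R$, preserve the parity of the exponent on the elements $x^\ast y$, while the $\gen{x}$-orbit of $x^iy$ already exhausts every $x^jy$ with $j\equiv i \pmod 2$; this yields the ``if and only if'' together with the fact that there are exactly two $R$-classes of such elements. For (d) I first check that $\gen{x^{2^l},x^iy}$ satisfies the $Q_8$ presentation, with $a=x^{2^l}$ of order $4$, $b=x^iy$ of order $4$ by (b), $a^2=x^{2^{l+1}}=b^2$, and $b^{-1}ab=a^{-1}$. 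Conversely I invoke the standard structure theorem that every subgroup of a generalized quaternion group is cyclic or generalized quaternion: any $H\cong Q_8$ is noncyclic of order $8$ with $H\not\le\gen{x}$, so $H\cap\gen{x}$ has order $4$ and must equal $\gen{x^{2^l}}$, whence $H=\gen{x^{2^l},x^iy}$ for any $b=x^iy\in H\setminus\gen{x}$.

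For (e) I parametrize the subgroups as $H_i=\gen{x^{2^l},x^iy}$ and note from the coset description that $H_i=H_{i'}$ precisely when $i\equiv i' \pmod{2^l}$, so there are $2^l$ of them indexed by $\ZZ/2^l$. Transporting the conjugation formulas of (c) to the indices gives the maps $i\mapsto i+2$ and $i\mapsto -i$ on $\ZZ/2^l$; for $l\ge 1$ the first has exactly the two orbits of even and of odd residues, each of size $2^{l-1}$, and the second preserves parity and so cannot fuse them. This produces the two classes of length $2^{l-1}$ represented by $Q=H_0$ and $Q'=H_1$. Finally (f) follows by orbit--stabilizer from (e): the normalizer has index $2^{l-1}$, hence order $16$; matching this against the explicit subgroup $\gen{Q,x^{2^{l-1}}}=\gen{x^{2^{l-1}},y}$, which also has order $16$ and is normalizing since conjugation by $x^{2^{l-1}}$ fixes $x^{2^l}$ and sends $y\mapsto x^{2^l}y\in Q$, forces equality, and identically for $Q'$.

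The routine steps are (a) and (b); the real bookkeeping lives in (d)--(e), where one must keep straight the two distinct reductions at play — exponents modulo $2^l$ governing \emph{equality} of the $Q_8$ subgroups, versus modulo $2$ governing \emph{conjugacy} of elements — and confirm that the two conjugation orbits both have length $2^{l-1}$ with the $y$-action failing to merge them. I would also be careful in (f) that the stated normalizer is asserted inside the ambient Sylow group: the orbit count pins it down within $R$ as $\gen{Q,x^{2^{l-1}}}$, and the remaining point to confirm is that passing up to $S$ does not enlarge the normalizer, which is the step where the embedding of $R$ into the construction of $S$ must be brought in.
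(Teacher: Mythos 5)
Your proposal is correct and takes essentially the same route as the paper's proof: direct elementary computation from the presentation \eqref{eq:quat} (the same squaring identity for (b), the same conjugation formulas sending $x^iy$ to $x^{i\pm 2j}y$ for (c), verification of the $Q_8$ relations plus a counting/indexing argument for (d)--(e), and the identity $y^{x^i}=x^{-2i}y$ underlying (f)); you merely spell out details the paper leaves implicit, such as the converse inclusion in (d) and the orbit--stabilizer count in (f). As for your closing caveat about passing from $R$ up to $S$: the ``$N_S$'' in part (f) should be read as $N_R$ --- the paper's own proof likewise computes only inside $R$, and for $S \in \Syl_2(K)$ the statement would be false (e.g.\ $N_S(Q_1)$ would contain $R_2R_3$) --- so no such extra step is needed.
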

\begin{proof} 
Part (a) is clear, and (b) follows since, for each $i$, 
\[
(a^ib)^2=a^iba^ib=ba^{-i}a^ib=b^2 
\] 
has order 2. A general element $a^jb^m$ as in (a) conjugates $a^ib$ to 
\[
b^{-m}a^{-j}a^iba^jb^{m}=(b^{-m}a^{i-2j}b^{m})b=
\begin{cases} 
a^{i-2j}b, \mbox{ if $m=0$}\\ 
a^{2j-i}b, \mbox{ if $m=1$}.  
\end{cases} 
\] 
from which the claim in (c) follows. 

Let $\Q$ be the set of subgroups of $R$ isomorphic to $Q_8$ as in (d), and fix
$Q \in \Q$. As $\gen{a}$ is cyclic of index $2$ in $R$, we have $Q\gen{a} = R$,
and so $Q \cap \gen{a} = \gen{a^{2^l}}$ by order considerations. This shows
that $Q$ is of the form $\gen{a^{2^l},a^ib}$ for some $i$. Conversely, for each
$i$, the elements $a^{2^{l}}$ and $a^{i}b$ satisfy the relations
\eqref{eq:quat}, applied with $l=0$, in place of $a$ and $b$ respectively.
Hence, $\gen{a^{2^l},a^ib} \cong Q_8$, and so $\gen{a^{2^l}, a^ib} \in \Q$.
This completes the proof of (d). 

Note that exactly four elements of the form $a^ib$ lie in a given member of
$\Q$.  Since there are $2^{l+2}$ choices for $i$, $\Q$ has cardinality
$2^{l+2}/4 = 2^l$. Part (e) now follows from the conjugacy information in (c),
while (f) follows from the observation that $b^a=a^{-2}b$ so that
$b^{a^i}=a^{-2i}b$.  
\end{proof}

Since $v_2(q-1)=l+2$ and $\omega$ is a primitive $2^{l+2}$ root of unity,
$\sqrt{\omega} \notin \mathbb{F}_{q}^\times$. So $p(t):=t^2-\omega$ is
irreducible in $\mathbb{F}_q$, and $\mathbb{F}_0:=\mathbb{F}_q[t]/p(t)$ is a
finite field of order $q^2$ containing $\mathbb{F}_q$. Set $c:=\left(
\begin{smallmatrix} t & 0  \\ 0 & t^{-1}  \\ \end{smallmatrix} \right) \in
\SL_2(\mathbb{F}_0).$ Then straightforward computations show that $c^2=a$ (so
$c$ has order $2^{l+3}$) and that $c^{-1}bc=ba$ and $cbc^{-1}=ab$. Hence by
Lemma \ref{l:quatprop} (d),(e), $c$ fuses the two conjugacy classes of
subgroups of $R$ isomorphic with $Q_8$. 

Finally, we will need the following lemma, which we will usually use in
Section~\ref{s:solqcrel} without further comment. For a discussion of (2), see
for example \cite[Theorem~4.54]{CravenTheory}.
\begin{Lem}\label{l:normsl2q} 
Let $\F:=\F_R(\SL_2(q))$ be the fusion system of $\SL_2(q)$ with $q$ odd. 
\begin{enumerate}
\item If $l = 0$, then $\F$ is constrained with centric normal subgroup $R$,
$N_{\SL_2(q)}(R) \cong \SL_2(3)$, and $\Out_\F(R) \cong C_3$. 
\item If $l > 0$, then $\{R,Q,Q'\}$ is a complete set of $\F$-conjugacy class
representatives of $\F$-centric radical subgroups. Moreover, $N_{\SL_2(q)}(Q)
\cong N_{\SL_2(q)}(Q') \cong \GL_2(3)$, $\Out_\F(Q) \cong \Out_\F(Q') \cong
S_3$, and $\Out_{\F}(R) = 1$.  
\end{enumerate}
\end{Lem}

\subsection{$\Spin_7(q)$}\label{s:spin} 
Let $q$ be an odd prime power, and let $V$ be an odd dimensional vector space
over $\FF_q$. Let $\qq$ be a nondegenerate quadratic form on $V$ and $\bb$ the
associated symmetric bilinear form, which determine each other via $\qq(v) =
\bb(v,v)$ and $\bb(v,w) = \frac{1}{2}(\qq(v+w)-\qq(v)-\qq(w))$.  Let $(V,\qq)$
the associated geometric space, and $\mathrm{O}(V) = \mathrm{O}(V,\qq)$ the
isometry group.  There are two such forms $\bb$ up to equivalence, and the
corresponding isometry groups are isomorphic. We may therefore take $\bb$ to be
of square discriminant when convenient. We have $\mathrm{O}(V) = \{\pm 1\} \times
\SO(V)$. The spinor norm $\SO(V) \to \FF_q^{\times}/\FF_q^{\times 2}$ is
defined by writing an element of $\SO(V)$ as a product of reflections, and then
taking the product of the discriminants of the $-1$-eigenspaces of those
reflections.  The kernel of the spinor norm is the simple subgroup $\Omega(V)$.
Let $\Spin(V)$ be the perfect double cover of $\Omega(V)$, and write $Z$ for
the center of
$\Spin(V)$. 

We generally refer to \cite[Appendix~A]{LeviOliver2002} and
\cite[Section~4]{AschbacherChermak2010} for information on the construction and
subgroup structure of the $\Spin$ groups but record the following basic lemma
for use in Section~\ref{s:solqcrel}.

\begin{Lem} \label{l:invlift} 
An involution in $\Omega(V)$ lifts to an involution in $\Spin(V)$ if and only
if the dimension of its $-1$-eigenspace is a multiple of $4$.  
\end{Lem} 
\begin{proof} 
See \cite[Lemma~A.4(b)]{LeviOliver2002}.
\end{proof}

From now take $V$ to be of dimension $7$.  To help motivate some of the
definitions in the next subsection, we describe very roughly the structure of
the normalizer of a four subgroup containing $Z$ in $\Spin_7(q)$.  In this
case, Lemma~\ref{l:invlift} implies that $\Spin_7(q) := \Spin(V)$ has two
classes of involutions, namely those with representatives given by the central
involution $z \in Z(\Spin_7(q))$ and by the preimage of an involution with
$-1$-eigenspace of dimension $4$.  Let $V_1$ be a nondegenerate subspace of
dimension $4$ (and Witt index $2$) and let $V_2$ be its orthogonal complement.
Let $z_1 \in \Spin(V)$ be the involution with $-1$-eigenspace $V_1$ (an
involution by Lemma~\ref{l:invlift}). The normalizer $B :=
N_{\Spin(V)}(\gen{z,z_1})$ contains the normal subgroup $C_B(V_2)C_B(V_1)$ with
index $4$, isomorphic to the commuting product
\[
\Spin(V_1) * \Spin(V_2) \cong (\SL_2(q) \times \SL_2(q)\times \SL_2(q))/\gen{(-1,-1,-1)},
\]
There is a four group complementing $C_B(V_2)C_B(V_1)$ in $B$, which contains
an involution interchanging the first two $\SL_2(q)$'s and centralizing the
third (while acting as $-1$ on $V_2$), and which contains an involution acting
simultaneously as a diagonal automorphism on each $\SL_2(q)$ factor.

All additional information about $\Spin_7(q)$ that we require directly will be
collected later in Lemmas \ref{l:cse} through \ref{l:allcraboveT}, in
Proposition~\ref{p:ngpnleqk}, and in the proof of Lemma~\ref{l:QRandQR*}.

\subsection{Construction of $\Sol(q)$}\label{s:conssol} Following work of
Solomon \cite{Solomon1974}, the Benson-Solomon systems were predicted to exist
by Benson \cite{Benson1998c}, and then later constructed by Levi and Oliver
\cite{LeviOliver2002, LeviOliver2005}.  They are exotic in the sense that they
are not of the form $\F_S(G)$ for any finite group $G$ with Sylow $2$-subgroup
$S$. They are also not the fusion system of any $2$-block of a finite group
\cite{Kessar2006}, \cite[Section~9.4]{CravenTheory}, an a priori stronger
statement.  After Levi and Oliver, Aschbacher and Chermak gave a different
construction of the Benson-Solomon systems as the fusion system of a certain
free amalgamated product of two finite groups having Sylow 2-subgroup
isomorphic to $\Spin_7(q)$ \cite{AschbacherChermak2010}.  We primarily view
$\Sol(q)$ through the lens of \cite{AschbacherChermak2010}, so we consider it
as the $2$-fusion system of an amalgamated product $G=H *_B K$, where
$H:=\Spin_7(q)$. 

The isomorphism type of the Benson-Solomon system $\Sol(q)$ depends not on $q$,
but only (uniquely) on the $2$-adic valuation of $q^2-1$ by \cite[Theorem
3.4]{ChermakOliverShpectorov2008}. For reasons of exposition, it will be
helpful therefore to fix the following choice of $q$: unless otherwise
specified, for the remainder of this section and the next, we \[ \textit{let
$l$ be a fixed but arbitrary nonnegative integer, and set $q = 5^{2^{l}}$.} \]

We have described how $B$ arises as a subgroup of $H$ in
Subsection~\ref{s:spin} (but the explicit embedding $B \hookrightarrow H$ in
the amalgam is not the ``obvious'' one). We now take a more abstract approach
to obtain a working description of $K$ in Aschbacher-Chermak free amalgamated
product, as
follows. Consider the natural inclusion $\SL_2(q) \le \SL_2(q^2)$ induced by an
inclusion of fields, and define $N := N_{\SL_2(q^2)}(\SL_2(q))$ so that
$|N:\SL_2(q)|=2$ and $N$ and $\SL_2(q)$ both have generalized quaternion Sylow
$2$-subgroups, as explained more fully in Subsection~\ref{s:quat}.  Form the
wreath product $W:= N \wr S_3$, and let $N_0 := N_1 \times N_2 \times N_3$ and
$X := S_3$ be the base and acting group respectively.  Note that $O^2(N_0)
\norm W$ is a direct product $\hat{L}_1 \times \hat{L}_2 \times \hat{L}_3$ of
three copies of $\SL_2(q)$ permuted transitively by $X$. 

Define $\hat{K} := O^2(N_0)C_{N_0}(X)X$ regarded as the group generated by the
wreath product $O^2(N_0) \rtimes X$, and an element of $N_0 \backslash
O^2(N_0)$ acting in the same way simultaneously on each factor $\hat{L}_i$ of
$O^2(N_0)$.  Thus, $Z(O^2(N_0)) = Z(O^2(N_0)C_{N_0}(X)) = \gen{(\pm 1, \pm 1,
\pm 1)}$ and $Z(\hat{K}) = \gen{(-1, -1, -1)}$. Here, we write $1$ for the
identity matrix.  Finally, set \[ K := \hat{K}/Z(\hat{K}).  \] We will write
$[a_1,a_2,a_3]$, for example, for the image $K$ of an element $(a_1,a_2,a_3)$
of $O^2(N_0)C_{N_0}(X)$. 

\begin{Not}\label{N:Knotation1} We fix the following notation for certain
subgroups of $K$.  \begin{itemize} \item[(a)] $L_i \cong \SL_2(q)$ for $i = 1,
2, 3$ are the images in $K$ of the subgroups $\hat{L}_i$ of $\hat{K}$;
\item[(b)] $L_0 := L_1L_2L_3$; \item[(c)] $X \cong S_3$ is the image in $K$ of
the subgroup with the same name; \item[(d)] $\tau \in X$ is the permutation
$(1,2)$ on the indices of the $L_i$; \item[(e)] $S$ is a Sylow $2$-subgroup of
$K$ containing $\tau$; \item[(f)] $U = Z(L_0) = \gen{[\pm 1, \pm 1, \pm 1]}
\cong C_2 \times C_2$; and \item[(g)] $B := L_0S$.  \end{itemize} \end{Not}

Thus, the subgroup $B$ in Notation~\ref{N:Knotation1}(g) is a subgroup of $K$
of index $3$, and $B \cap X = \gen{\tau}$. As was shown in
\cite{AschbacherChermak2010} and recalled in the last subsection, there is a
four subgroup $U \leq H$ such that $B \cong N_{H}(U)$, and a choice of
injection $\iota:B \hookrightarrow H$ such that the free amalgamated product
$G=H *_B K$ has finite Sylow $2$-subgroup $S$ and determines a saturated fusion
system $\Sol(q)$ over $S$ that was constructed by Levi and Oliver by different
means \cite{LeviOliver2002,LeviOliver2005}. An incorrect choice of $\iota$ can
lead to a fusion system which is not saturated.  See \cite[Section
5]{AschbacherChermak2010} and \cite{LeviOliver2005} for more details, but
generally this subtlety will be unimportant in our computations.  

It will be helpful to introduce some more notation. Some of it follows the
notation of \cite[Section~10]{AschbacherChermak2010} in preparation for the
application in Section~\ref{s:solqcrel} of some of the results there.

\begin{Not}\label{N:Knotation2} 
We fix the following additional notation for subgroups and elements of $K$.
\begin{itemize} 
\item[(a)] $R_i \cong Q_{2^{l+3}}$ is a Sylow $2$-subgroup of $L_i$ for
$i=1,2,3$, chosen so that $X \cong S_3$ acts on the set $\{R_1,R_2,R_3\}$;
\item[(b)]  $R_0:=R_1R_2R_3 \in \Syl_2(L_0)$; 
\item[(c)]  $\Q_i$ is the set of subgroups of $R_i$ isomorphic to $Q_8$ for
$i=1,2,3$; thus, $\Q_i = \{R_i\}$ if $l = 0$, while $\Q_i$ is a union of two
$R_i$-conjugacy classes of subgroups if $l > 0$ by Lemma~\ref{l:quatprop}(e);
\item[(d)] when $l > 0$, $Q_i,Q_i' \in \Q_i$ are representatives for the
two $R_i$-conjugacy classes of subgroups chosen so that $X \cong S_3$ acts by
permuting the sets $\{Q_1,Q_2,Q_3\}$ and $\{Q_1',Q_2',Q_3'\}$;
\item[(e)] $\mathbf{c} := [c,c,c]$ where $c$ is as in Section~\ref{s:quat}, 
so that $\mathbf{c}$ acts simultaneously on $L_i \cong SL_2(q)$ by conjugation
in the way described there; 
\item[(f)] $\mathbf{d} := [b,b,b]\mathbf{c} \in K$, an involution commuting
with $\tau$; and 
\item[(g)] $\tau' = \mathbf{d}\tau$.  
\end{itemize} 
\end{Not}

Note that $\gen{\mathbf{d}, \tau}$ is a four group which intersects $R_0$
trivially. Thus, refining Notation~\ref{N:Knotation1}(e), we fix the following
Sylow $2$-subgroup of $K$ throughout the remainder of this section and in
Section~\ref{s:solqcrel}: \[ S = R_0\gen{\mathbf{d}, \tau}.  \] Then $R_0$ is
normal in $S$ with complement $\gen{\mathbf{d},\tau}$, $R_3$ is normal in $S$
and $\mathbf{d}$ interchanges the two $R_i$-conjugacy classes of subgroups
isomorphic with $Q_8$ when $l > 0$. Finally, we define 
\[ 
\K:=\F_S(K), \hspace{2mm} \H:=\F_S(H) \hspace{2mm} \mbox{ and }\hspace{2mm} \F:=\F_S(G).  
\]
We note that $\F$ is the fusion system generated by $\H$ and $\K$ by
\cite[Theorem~3.3]{Semeraro2014}, namely $\F$ is the smallest fusion system on
$S$ containing all morphisms in $\H$ and $\K$.

\subsection{The torus of $\F$}\label{s:torus} 
The next lemma calls attention to the 2-power torsion subgroup $T \leq S$ in a
maximal torus of $H$. Viewed as a subgroup of $K$, it may be generated by the
elements $[a,1,1], [1,a,1], [c,c,c]$ in the notation of
Subsection~\ref{s:quat}, and it is inverted by the involution $\mathbf{d}$.

\begin{Lem}\label{l:2torus} 
There is a unique subgroup $T$ of $S$ isomorphic to $(C_{2^{l+2}})^3$. The
centralizer $C_H(T)$ is a split maximal torus of $H$; in particular $C_S(T) =
T$. The subgroup $T$ is $\F$-centric and weakly $\F$-closed. Moreover,
$\Out_S(T) = S/T \cong C_2 \times D_8$, $\Out_\F(T) \cong C_2 \times \GL_3(2)$,
and $\Out_\H(T) \cong C_2 \times S_4$ is the maximal parabolic in $\Out_\F(T)$
lying over the Borel subgroup $\Out_S(T)$ given by the stabilizer of $Z$ in the
action of $\Out_\F(T)$ on $\Omega_1(T)$.
\end{Lem} 
\begin{proof} 
By \cite[Lemma 4.9(c)]{AschbacherChermak2010}, there is a unique homocyclic
subgroup of $S$ of rank $3$ and exponent $4$, $T$ is the centralizer in $S$ of
that subgroup, and $C_H(T)$ is a split maximal torus of $H$. Since $T$ is
abelian, this shows that $T$ is the unique subgroup of $S$ of its isomorphism
type. Then \cite[Lemma~4.3 and Lemma~4.8]{AschbacherChermak2010} show that $S/T
\cong C_2 \times D_8$, and $\Out_\H(T) \cong C_2 \times S_4$.  The structure of
the outer automorphism group $\Out_\F(T)$ follows from the construction of the
Aschbacher-Chermak amalgam in \cite[Lemma~5.2]{AschbacherChermak2010}.  All
other points follow.
\end{proof}

\subsection{The standard elementary abelian chain in $S$}\label{ss:standardseq} 
We refer to Sections~4 and 7 of \cite{AschbacherChermak2010} for more
discussion on the following items. Set $z := [-1,-1,1] = [1,1,-1] \in S$.
There is a chain of elementary abelian subgroups \[ Z < U < E < A \] of ranks
1, 2, 3, and 4, respectively, where $Z = Z(S) = \gen{z}$, $U$ is the unique
normal four subgroup of $S$ of Notation~\ref{N:Knotation1}(f), $E =
\Omega_1(T)=\langle [-1,1,1],[1,-1,1],[a^{2^l},a^{2^l},a^{2^l}]\rangle$, and $A
= E\gen{\mathbf{d}}$. For a member $X_n$ of the above chain of rank $n$,
$\Aut_\F(X_n) = \Out_\F(X_n) \cong \GL_n(2)$ by
\cite[Lemma~3.1]{LeviOliver2002}.  Also, $\H = C_\F(Z)$ and $\K = N_\F(U)$ by
\cite[Proposition~9.2]{AschbacherChermak2010}.

\subsection{Centric radicals containing the torus}\label{ss:abovetorus}
In the next five lemmas we identify, using Lemma~\ref{l:abovewc}, the outer
automorphism groups of the centric radical subgroups that contain the $2$-torus
$T$. 

\begin{Lem} \label{l:cse} 
The subgroup $C_S(E)$ of $S$ is $\F$-centric and weakly $\F$-closed, $C_S(E) =
T\gen{\mathbf{d}}$, and $C_H(C_S(E)) = Z(C_S(E))) = E$. Moreover,
$\Out_S(C_S(E)) = S/C_S(E) \cong D_8$, $\Out_\F(C_S(E)) \cong \GL_3(2)$, and
$\Out_{\H}(C_S(E)) \cong S_4$ is the maximal parabolic in $\Out_\F(C_S(E))$
given by the stabilizer of $Z$ under the natural action of $\Out_\F(C_S(E))$ on
$E$.  
\end{Lem} 
\begin{proof} 
Since $E = \Omega_1(T)$, we have $C_S(E) \geq T$. As $T$ is $\F$-centric,
so is $C_S(E)$. Let $\phi \in \Hom_\F(C_S(E),S)$.  By Lemma~\ref{l:2torus},
$T^\phi = T$, so also $E^\phi = E$. Hence, $C_S(E)^\phi \leq C_S(E^\phi) =
C_S(E)$, and so $C_S(E)$ is weakly $\F$-closed. 
From the description of $\Out_\F(T)$ in Lemma~\ref{l:2torus}, the kernel of the
action of $S/T$ on $E$ is of order $2$.  Now $\mathbf{d} \in S$ inverts $T$, so
centralizes $E = \Omega_1(T)$. Hence, $\mathbf{d}$ represents the lone
nontrivial coset of $C_H(T)$ in $C_H(E)$, whose elements invert the maximal
torus $C_H(T)$ of $H$ containing $T$ (see
\cite[Lemma~4.3(a,d)]{AschbacherChermak2010}).  So $C_S(E) =
T\gen{\mathbf{d}}$, and $C_H(C_S(E)) \leq T$ from Lemma~\ref{l:2torus}.  Hence,
the center $Z(C_S(E))$ is $C_H(C_S(E)) = C_{T}(\mathbf{d}) = E$.

As $O_2(\Out_\F(T)) = \Out_{C_S(E)}(T)$, the descriptions of the outer
automorphism groups in $\F$ and $\H$ follow from Lemmas~\ref{l:abovewc} and
\ref{l:2torus}.  
\end{proof}

\begin{Lem}\label{l:s} 
$N_H(S) = S$ and $\Out_\H(S) = \Out_\F(S) = \Out_\K(S) = 1$.
\end{Lem} 
\begin{proof} 
Since $C_S(E)$ contains its centralizer in $H$ from Lemma~\ref{l:cse}, so does
$S$. Then as the Sylow $2$-subgroups of $S_4$ and $\GL_3(2)$ are
self-normalizing, the lemma now follows from Lemmas~\ref{l:abovewc} and
\ref{l:cse}. 
\end{proof}

\begin{Lem} \label{l:csu} 
The subgroup $C_S(U)$ of $S$ is $\F$-centric and weakly $\F$-closed, and
$Z(C_S(U)) = U$.  The quotient $C_S(U)/C_S(E)$ is the unipotent radical of the
stabilizer in $\Out_\F(C_S(E))$ of $U$.  Thus, $\Out_S(C_S(U)) =
\Out_\H(C_S(U)) \cong C_2$ is induced by $\gen{\tau}$, and $\Out_\F(C_S(U))
\cong S_3$ is induced by $X$. 
\end{Lem} 
\begin{proof} 
From the structure of $\Out_\F(C_S(E))$ in Lemma~\ref{l:cse},
$\Out_{C_S(U)}(C_S(E)) = C_S(U)/C_S(E)$ is the unipotent radical of the
stabilizer of $U$ in the action of $\Out_\F(C_S(E))$ on $E$, so in particular
$Z(C_S(U)) = C_{E}(C_S(U)) = U$. The descriptions of the outer automorphism
groups now follow from Lemmas~\ref{l:abovewc} and \ref{l:cse} and the structure
of $\GL_3(2)$. 
\end{proof}

\begin{Lem}\label{l:csez} 
The subgroup $C_S(E/Z) = \{s \in S \mid [E,s] \leq Z\}$ is $\F$-centric and
weakly $\F$-closed, and $Z(C_S(E/Z)) = Z$. The quotient $C_S(E/Z)/C_S(E)$ is
the unipotent radical of the stabilizer in $\Out_\F(C_S(E))$ of $Z$ in the
natural action on $E$.  Thus, $\Out_{\H}(C_S(E/Z)) = \Out_\F(C_S(E/Z)) \cong
S_3$.  
\end{Lem} 
\begin{proof}
Observe that $C_S(E) \leq C_S(E/Z)$ and that $C_S(E/Z)/C_S(E)$ is the group of
transvections in $\Out_\F(C_S(E))$ on $E$ with center $Z$. So $C_S(E/Z)/C_S(E)$
is the unipotent radical of the stabilizer of $Z$. Also, as $Z(C_S(E)) = Z$
from Lemma~\ref{l:cse}, we have $Z(C_S(E/Z)) = C_{E}(C_S(E/Z)) = Z$.  Since
$C_S(E)$ is $\F$-centric, weakly $\F$-closed, and $\Aut_\H(C_S(E)) =
C_{\Aut_\F(C_S(E))}(Z)$, all points follow from Lemmas~\ref{l:abovewc} and
\ref{l:cse} as in the previous lemma.  
\end{proof}

\begin{Lem}\label{l:allcraboveT} 
The collection of $\F$-centric radical subgroups containing $T$ is $\{C_S(E),
C_S(U), C_S(E/Z), S\}$.  The collection of $\H$-centric radical subgroups
containing $T$ is $\{C_S(E/Z),S\}$.  
\end{Lem} 
\begin{proof} 
There are four $2$-radical subgroups in $\GL_3(2)$ inside a fixed Sylow
$2$-subgroup: the identity subgroup and the unipotent radicals of the three
associated parabolics.  So the lemma follows from the bijection of
Lemma~\ref{l:abovewc} together with Lemmas~\ref{l:cse}-\ref{l:csez}.  
\end{proof}

\subsection{The sectional rank of $S$} 
Before continuing, we record the sectional rank of $S$ using the later
Proposition~\ref{p:ngpnleqk}, which locates an extraspecial subgroup of order
$2^7$ in $S$. 
\begin{Lem} \label{L:sectrank} 
The sectional rank of $S$ is $6$.  
\end{Lem}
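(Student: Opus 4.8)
The plan is to compute the sectional rank of $S$ by combining a lower bound coming from an explicit elementary abelian section of rank $6$ with a matching upper bound. First I would recall that the sectional rank $s(S)$ is the maximum over all sections $N/M$ (with $M \unlhd N \leq S$) of the rank of the largest elementary abelian section, equivalently the maximum rank of an elementary abelian quotient of a subgroup of $S$. To obtain the lower bound $s(S) \geq 6$, I would use the hint supplied in the statement: $S$ contains an extraspecial subgroup $P$ of order $2^7$. For such a $P$, the Frattini quotient $P/Z(P) = P/\Phi(P)$ is elementary abelian of rank $6$, since $|P| = 2^7$ and $|Z(P)| = 2$ for an extraspecial $2$-group. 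This immediately gives an elementary abelian section of rank $6$, hence $s(S) \geq 6$.

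For the upper bound $s(S) \leq 6$, I would work with the normal torus $T \unlhd S$ from Lemma~\ref{l:2torus}, where $T \cong (C_{2^{l+2}})^3$ and $S/T \cong C_2 \times D_8$. The key observation is that the sectional rank is controlled by the ranks of $T$ and of the quotient $S/T$: any elementary abelian section of a subgroup $P \leq S$ maps to an elementary abelian section of $PT/T \leq S/T$ with kernel an elementary abelian section of $P \cap T \leq T$, so $s(S) \leq s(T) + s(S/T)$. Here $T$ is homocyclic of rank $3$, so $s(T) = 3$; and $S/T \cong C_2 \times D_8$ has sectional rank $3$, since $D_8$ has sectional rank $2$ (its largest elementary abelian section is a Klein four group) and the extra direct factor $C_2$ contributes one more. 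This yields $s(S) \leq 3 + 3 = 6$.

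Combining the two bounds gives $s(S) = 6$. The main obstacle is justifying the subadditivity inequality $s(S) \leq s(T) + s(S/T)$ cleanly, and in particular verifying that $C_2 \times D_8$ really has sectional rank exactly $3$ rather than accidentally larger; this is a small finite computation but must be done carefully, as the extraspecial group of order $2^7$ shows that the bound $6$ is sharp and there is no slack to absorb an error. I would also need to locate the extraspecial subgroup of order $2^7$ explicitly inside $S$ for the lower bound, which is the content Proposition~\ref{p:ngpnleqk} is invoked to supply; once that proposition is in hand, the Frattini-quotient argument is routine.
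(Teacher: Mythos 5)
Your proposal is correct and follows essentially the same route as the paper: the lower bound $s(S)\geq 6$ from the Frattini quotient of the extraspecial subgroup of order $2^7$ supplied by Proposition~\ref{p:ngpnleqk}(a), and the upper bound from the subadditivity $s(S)\leq s(T)+s(S/T)=3+3=6$ using Lemma~\ref{l:2torus}. Your verification that $C_2\times D_8$ has sectional rank exactly $3$ is a detail the paper leaves implicit, but the argument is the same.
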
 
\begin{proof} 
By Lemma~\ref{p:ngpnleqk}(a) below, $S$ contains an extraspecial subgroup with
central quotient of rank $6$, and hence $s(S) \geq 6$. On the other hand, the
sectional rank of a group is at most the sum of the sectional ranks of a normal
subgroup and corresponding quotient, so Lemma~\ref{l:2torus} shows that $s(S)
\leq s(T) + s(S/T) = 3 + 3 = 6$.  
\end{proof}

\section{Centric radicals in $\Sol(q)$}\label{s:solqcrel}

The aim of this section is to refine the description of the centric radical
subgroups of a Benson-Solomon system that results from a combination of
\cite[Section~10]{AschbacherChermak2010} and
\cite[Section~2]{ChermakOliverShpectorov2008}.  A starting point is the next
result due to Aschbacher and Chermak, which allows us to work in the groups $H$
and $K$ separately.  Adopt the notation from Section~\ref{s:solq}, and in
particular from Notation~\ref{N:Knotation1}, \ref{N:Knotation2}, and
Subsections~\ref{s:torus}, \ref{ss:standardseq}. Recall that $G$ is the
Aschbacher-Chermak free amalgamated product, and that $\F = \F_S(G)$.

\begin{Prop}\label{p:fcr}
Up to $\F$-conjugacy, a subgroup $P \leq S$ is $\F$-centric radical if and only if
\begin{itemize}
\item[(a)] $P = A$ is elementary abelian of order $2^4$ and $\Out_\F(P) = \GL_4(2)$; or
\item[(b)] $P=C_S(E)$ and $\Out_\F(P) \cong \GL_3(2)$; or
\item[(c)] Either:
\begin{itemize}
\item[(i)] $N_G(P) \le K$ and $P \in \K^{cr}$; or
\item[(ii)] $N_G(P) \le H$ and $P \in \H^{cr}$.
\end{itemize} 
\end{itemize}
\end{Prop}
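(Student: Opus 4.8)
The plan is to organize the argument around the normalizer $N_G(P)$ in the amalgam $G = H *_B K$, using the action on the Bass--Serre tree. Recall that $G$ acts on a tree $\mathcal{T}$ whose vertex stabilizers are the conjugates of $H$ and of $K$ and whose edge stabilizers are the conjugates of $B$; since $P \le S \le B$, the fixed subtree $\mathcal{T}^P$ contains the base edge and $N_G(P)$ acts on $\mathcal{T}^P$. Because $\F = \F_S(G)$, we have $\Aut_\F(P) = \Aut_G(P) = N_G(P)/C_G(P)$, a finite group even when $N_G(P)$ is infinite. The trichotomy (a)--(c) matches exactly the behaviour of this action: cases (c)(i) and (c)(ii) are where $N_G(P)$ fixes a vertex of $\mathcal{T}^P$, while (a) and (b) are the remaining case where it fixes no vertex. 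I would also record at the outset that every $\F$-centric $P$ contains $Z = Z(S)$, since centricity forces $Z(S) \le C_S(P) = Z(P)$.

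For the single-factor cases I would argue as follows. If $N_G(P)$ fixes a vertex $v$ of $\mathcal{T}^P$, then $N_G(P) \le G_v$, which is a conjugate of $H$ or of $K$; after replacing $P$ by a fully $\F$-normalized conjugate via Lemma~\ref{l:extfn} we may assume $N_G(P) \le H$ or $N_G(P) \le K$. In the latter case $\Aut_\F(P) = \Aut_\K(P)$, so $O_2(\Out_\F(P)) = O_2(\Out_\K(P))$ and $\F$-radicality is equivalent to $\K$-radicality. For centricity, the constraint $N_G(P) \le K$ forces the conjugating morphisms realizing the condition $C_S(Q) = Z(Q)$ on $\F$-conjugates $Q$ to be induced inside $K$, so that $\F$-centricity reduces to $\K$-centricity. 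Hence $P$ is $\F$-centric radical if and only if $P \in \K^{cr}$, which is (c)(i); the case $N_G(P) \le H$ is symmetric and gives (c)(ii).

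The heart of the proof is the remaining case, where $N_G(P)$ fixes no vertex and so lies in neither $H$ nor $K$. Using $\H = C_\F(Z)$ and $\K = N_\F(U)$ from Subsection~\ref{ss:standardseq}, this means $\Aut_\F(P)$ neither fixes $z$ nor normalizes $U$; consequently $V := \langle z^{\Aut_\F(P)} \rangle \le \Omega_1(Z(P))$ is an elementary abelian $\Aut_\F(P)$-module on which the action is nontrivial and for which $U$ is not invariant. Since $N_G(P)$ fixes no vertex, $P$ lies in the stabilizers along a path of length at least two, and the structure of $B = L_0S$ and of $N_H(U)$ confines $V$ to the standard sequence $Z < U < E < A$. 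I would then invoke \cite[Section~10]{AschbacherChermak2010}, together with \cite[Section~2]{ChermakOliverShpectorov2008}, to pin down the two maximal possibilities: $V = A$, giving $P = A$ with $C_S(A) = A$ and $\Out_\F(A) = \GL_4(2)$ as in Subsection~\ref{ss:standardseq} (case (a)); and $V = E$, giving $P = C_S(E)$ with $\Out_\F(P) \cong \GL_3(2)$ (case (b)). The main obstacle is \emph{completeness}: one must rule out additional $\F$-centric radical classes by showing that the intermediate choices $V = Z$ and $V = U$ either force $O_2(\Out_\F(P)) \ne 1$, so that $P$ fails to be radical, or push $N_G(P)$ back into a single factor; and one must verify that $A$ and $C_S(E)$ are genuinely $\F$-centric and $\F$-radical. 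Both points follow by matching the centric radical list of \cite[Section~10]{AschbacherChermak2010} against the automizer data of Subsection~\ref{ss:standardseq}, the $\GL_n(2)$-actions certifying radicality since $O_2(\GL_n(2)) = 1$, and the equalities $C_S(A) = A$ and $C_S(C_S(E)) = C_S(E)$ certifying centricity.
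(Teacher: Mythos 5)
The paper proves this proposition by citation alone (it is \cite[Lemma~10.9]{AschbacherChermak2010}), so the question is whether your sketch stands as an independent proof; it does not, because of two concrete gaps. The first is the centricity reduction in case (c). When $N_G(P) \le K$ you do get $\Aut_\F(P) = N_G(P)/C_G(P) = N_K(P)/C_K(P) = \Aut_\K(P)$, so the two radical conditions agree; but your claim that $\F$-centricity ``reduces to $\K$-centricity'' because the conjugating morphisms are ``induced inside $K$'' is invalid. $\F$-centricity quantifies over \emph{all} $\F$-conjugates of $P$, and $\F = \gen{\H,\K}$, so an $\F$-conjugate of $P$ is reached by a zigzag of $H$- and $K$-conjugations whose intermediate subgroups have nothing to do with $N_G(P)$; the hypothesis $N_G(P) \le K$ constrains automorphisms of $P$, not morphisms with source $P$. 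The implication ``$\K$-centric plus $N_G(P)\le K$ implies $\F$-centric'' is precisely the nontrivial direction here: the paper itself has to invoke \cite[Lemma~3.3(a)]{LeviOliver2002} for the analogous step in the proof of Proposition~\ref{p:h}. A repair would be to pass to a fully centralized representative via Lemma~\ref{l:extfn} and test centricity there, but then one must also check that the hypothesis $N_G(P)\le K$ survives that conjugation, which you never address.

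The second gap is that the heart of your argument, completeness in the vertex-free case, is circular. After introducing $V = \gen{z^{\Aut_\F(P)}}$ you assert without proof that $V$ is confined to the standard sequence $Z < U < E < A$ (there is no reason given why the normal closure of $z$ should be one of these four subgroups rather than some other elementary abelian subgroup of $Z(P)$), and you then ``invoke \cite[Section~10]{AschbacherChermak2010}'' both to pin down the possibilities $A$ and $C_S(E)$ and to rule out further classes. But the classification in that section \emph{is} Lemma~10.9, i.e.\ the statement being proved; once you cite it at that granularity, the tree-theoretic scaffolding is redundant and the honest proof collapses to the paper's one-line citation, while the scaffolding itself carries the error described above. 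A smaller but real issue: Subsection~\ref{ss:standardseq} gives $\Aut_\F(E) \cong \GL_3(2)$, not $\Out_\F(C_S(E)) \cong \GL_3(2)$, so even case (b) of your conclusion needs an argument you have not supplied. As written, the proposal is either the paper's citation in disguise or, read as self-contained, fails at both of these points.
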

\begin{proof}
See \cite[Lemma 10.9]{AschbacherChermak2010}.
\end{proof}
For the smallest Benson-Solomon system, the results of
\cite{ChermakOliverShpectorov2008}, when combined with Proposition~\ref{p:fcr},
supply sufficiently precise information for our needs, as we make clear in
Subsection~\ref{s:l=0}.  For the larger systems, Proposition~\ref{p:ngpnleqk}
below yields a sufficiently detailed description for the centric radicals
occurring in Proposition~\ref{p:fcr}(c)(ii) whose normalizer in $G$ is not
contained in $K$. 

Recall that $(V, \qq)$ is the orthogonal space from Section~\ref{s:spin};
$\qq(v)$ is referred to as the \emph{norm} of the vector $v$.  Following
\cite[Section~10]{AschbacherChermak2010}, we write $\Lambda(V)$ for the
collection of all sets of pairwise orthogonal subspaces whose sum is $V$.  For
$\Lambda \in \Lambda(V)$, the \emph{type} of $\Lambda$ is the nondecreasing
list of dimensions of the members of $\Lambda$. Write $N_H(\Lambda)$ for the
subgroup of $H$ which permutes the members of $\Lambda$, and write
$C_H(\Lambda)$ for the subgroup of $H$ which acts on each member of $\Lambda$.
We use exponential notation for the type, writing, for example, $1^7$ for
$(1,\dots,1)$ and $1^52$ for $(1,\dots,1,2)$.  Also we write $2^{1+2k}_{+}$ and
$2^{1+2k}_{-}$ for the extraspecial $2$-groups of width $k$ and plus and minus
type, respectively. Finally, if $Y$ is a finite group and $\pi$ is a set of
primes we write (as usual) $O_{\pi}(Y)$ for the unique maximal normal
$\pi$-subgroup of $Y$, $O_{\pi,\pi'}(Y)$ for the preimage in $Y$ of
$O_{\pi'}(Y/O_{\pi}(Y))$, and $O_{\pi,\pi',\pi}(Y)$ for the preimage in $Y$ of
$O_{\pi}(Y/O_{\pi,\pi'}(Y))$.

\begin{Prop}\label{p:ngpnleqk}
Suppose that $P \in \H^{cr}$ with $N_H(P) \nleq K$. Then, using $*$ to denote
the central product in which all centers have been identified, one of the
following holds.
\begin{itemize}
\item[(a)] $P = C_H(\Lambda)$ for some $\Lambda \in \Lambda(V)$ of type
$1^7$ with each member of $\Lambda$ spanned by a vector of square norm.
Moreover, $P \cong D_8*D_8*D_8 \cong 2^{1+6}_+$, and $\Out_\F(P) \cong
\Out_\H(P) \cong  
\begin{cases}
A_7 \mbox{ if $l=0$}\\
S_7 \mbox{ if $l > 0$};
\end{cases}$
\item[(b)] $P = C_H(\Lambda)$ for some $\Lambda \in \Lambda(V)$ of type
$1^7$ with exactly six $1$-spaces spanned by a vector of non-square norm. 
Moreover, $P \cong C_4*D_8*Q_8 = C_4*2^{1+4}_- \cong C_4 * 2^{1+4}_{+}$
and $\Out_\F(P) \cong \Out_\H(P) \cong S_6$;
\item[(c)] $l > 0$, and $P = O_2(N_H(\Lambda))\gen{t}$ for some $\Lambda
\in \Lambda(V)$ of type $1^52$ with each $1$-space spanned by a vector of
square norm and with the $2$-space a hyperbolic line. Moreover, $t$ acts as
$-1$ on the $1$-spaces and as a reflection on the line, $P \cong D_8*Q_8 *
Q_{2^{l+3}}$, and $\Out_\F(P) \cong \Out_\H(P) \cong S_5$; 
\item[(d)] $P = C_S(E/Z)$, $|S:P|=2$, and $\Out_\F(P) \cong \Out_\H(P) \cong
S_3$.
\end{itemize}
Moreover, there is exactly one $\H$-conjugacy class of subgroups of $S$ of each
of the given types.
\end{Prop}
\begin{proof}
Except for the last statement and the alternative descriptions of the groups
$P$ in (c) and (d), this is proved in \cite[Lemma 10.7]{AschbacherChermak2010}.
(Note each subgroup in (a)-(d) has center $Z$, so $\Out_\F(P) = \Out_\H(P)$
in all cases.)  To see that $P \cong D_8*Q_8*Q_{2^{l+3}}$ in (c), we recall
the setup of Aschbacher and Chermak as follows. 
Set $\bar{H} := H/Z \cong \Omega_7(q)$. 
The description of the subgroup in part (c) is discussed at and around
\cite[p.937, l.5]{AschbacherChermak2010}. For such a subgroup $P$ as in (c),
$P$ preserves a decomposition $V = V_1 \perp \cdots \perp V_5 \perp W$, where
$\dim V_i = 1$ and where $W$ is a hyperbolic line.  Set $V_0 = V_1+\cdots+V_5$,
$H_1 = C_{H}(W)$, $H_2 = C_H(V_0)$, and let $t$ be an element which acts as
$-1$ on the $V_i$ and which induces a reflection on $W$. The subgroup of $H$
preserving the above decomposition is of the form $H_1H_2\gen{t}$, where
$O_2(H_1) \cong D_8*Q_8$ has center $Z$, $H_1/O_2(H_1) \cong S_5$, and $H_2$ is
cyclic of order $2(q-1)$. 
Further, $P = P_1P_2\gen{t}$, where $P_1 = O_2(H_1)$, $P_2 = O_2(H_2)$, and
$[P_1,P_2\gen{t}] = 1$.
The image of $t$ in $\bar{H}$ has $-1$-eigenspace of dimension $6$, so $t$
squares to $z$ in $H$ by Lemma~\ref{l:invlift}. Likewise, an element $s \in
P_2$ acting as $-1$ on $W$ and as the identity on $V_0$ squares to $z$. This
shows $P_2\gen{t} = Q_{2^{l+3}}$ 
and $Z(P_1) = Z = Z(P_2\gen{t})$, so that $P$ has the structure as claimed in
(c).

The subgroup in (d) appears in the proof of 10.7 as the only subgroup $P$
satisfying the conditions that contains an elementary abelian normal subgroup
$P_0$ of rank at least 3. Having such $P_0$ of rank $\geq 4$ is ruled out on
\cite[p.957, l.19-22]{AschbacherChermak2010}. Let $P \in \H^{cr}$ with $N_H(P)
\nleq K$, and assume that there is an elementary abelian normal subgroup $P_0$
of $P$ of $2$-rank $3$.  Then \cite[p.973, l.22-29]{AschbacherChermak2010}
shows that $P_0 = E$ and $C_S(E) \leq P$. As $N_{H}(S) = S \leq K$ from
Lemma~\ref{l:s}, we have $P = C_S(E/Z)$ by Lemma~\ref{l:allcraboveT}.
Lemma~\ref{l:csez} then gives $\Out_\F(P) = \Out_\H(P) \cong S_3$. 

Finally, we must verify the last statement. For $P$ in (a)-(b), this follows
from a slight extension of Witt's Lemma, as stated in \cite[Lemma~2.7.2]{GLS3},
and induction on dimension. Consider a subgroup satisfying the conditions in
(c).  From the description of $P$ in the first paragraph, we see that $P$ is a
Sylow $2$-subgroup of $O_{2',2}(N_H(\Lambda))$. By \cite[Lemma~2.7.2]{GLS3}
again, $O_{2',2}(N_H(\Lambda))$ is uniquely determined up to $H$-conjugacy, so
$P$ is uniquely determined up to $H$-conjugacy by Sylow's Theorem in
$O_{2',2}(N_{H}(\Lambda))$. For uniqueness of the subgroup in (d), there is
nothing to do.  This completes the proof of the proposition.
\end{proof}

\begin{Not}\label{N:R**}
We denote a member of the $\F$-conjugacy class of a subgroup appearing in
Proposition~\ref{p:ngpnleqk} parts (a),(b), and (c) by $R_{1^7}$, $R_{1^7}'$
and $R_{1^52}$, respectively, to best indicate their origins.  The reader
should not confuse these with the generalized quaternion groups $R_1$,
$R_2$, and $R_3$.  When $l = 0$, the subgroups $R_{1^7}$ and $R_{1^7}'$
correspond with the subgroups $R$ and $R^*$ of Section 2 of
\cite{ChermakOliverShpectorov2008}.
\end{Not} 

We next describe the centric radical subgroups arising in case (c)(i) of
Proposition~\ref{p:fcr}. Recall Notation~\ref{N:Knotation1} and
Notation~\ref{N:Knotation2}. In addition, for any subgroup $Y$ of $K$, we set
$Y_0 = Y \cap L_0$, and let $Y_i$ be the \emph{projection} of $Y_0$ in $L_i$
for $1 \leq i \leq 3$. That is, $Y_i$ is the image in $L_i$ of the projection
of the preimage of $Y_0$ in $\hat{L}_i$ (cf. Notation~\ref{N:Knotation1}(a))
under the quotient map $\hat{K} \to K$. 

\begin{Prop}\label{p:10.2}
Fix $P \leq S$. Then $P \in \K^{cr}$ if and only if 
\begin{enumerate}
\item[(a)] $P \cap L_0 = P_1P_2P_3$, and for each $i \in \{1,2,3\}$, either
$P_i \in \Q_i$ or $P_i = R_i$; and
\item[(b)] one of the following holds. Either,
\begin{enumerate}
\item[(i)] $P \in \{C_S(U), S\}$, or
\item[(ii)] $P = P_1P_2P_3 \leq R_0$ with $P_i \in \Q_i$ for at least two indices $i$, or
\item[(iii)] $P = P_0\gen{s}$ for some $s \in P \backslash C_P(U)$ such that 
\begin{itemize}
\item[(1)] $s^2 \in P_0$,
\item[(2)] either $P_3 \in \Q_3$, or $P_i \in \Q_i$ for both $i = 1$ and $2$, and
\item[(3)] if $P_3 \in \Q_3$, then $\Out_{L_3}(P)$ is not a $2$-group.
\end{itemize}
\end{enumerate}
\end{enumerate}
\end{Prop}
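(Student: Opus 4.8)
The plan is to reduce the statement to a computation inside the finite group $K$. Since $\K = \F_S(K)$, a subgroup $P \le S$ lies in $\K^{cr}$ if and only if $P$ is $2$-centric in $K$ (that is, $Z(P) \in \Syl_2(C_K(P))$) and $2$-radical in $K$ (that is, $O_2(N_K(P)/PC_K(P)) = 1$), by the standard dictionary for fusion systems of finite groups. I would control both conditions through two projections: the intersection with the normal central product $L_0 = L_1L_2L_3$, recorded by the coordinate projections $P_1,P_2,P_3$; and the image $\bar P$ of $P$ in $K/L_0$. The starting observations are that $S \cap L_0 = R_0$ and that $S/R_0 \cong \gen{\bar{\mathbf d}} \times \gen{\bar\tau} \cong C_2 \times C_2$, where $\mathbf d$ centralizes $U = Z(L_0)$ while $\tau$ induces the transposition $(1,2)$ on the three factors. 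The entire case analysis is then organized by the position of $\bar P$ in this four-group, and uses repeatedly that, by Lemma~\ref{l:normsl2q}, a proper factor $P_i \in \Q_i$ has $\Out_{L_i}(P_i) \cong S_3$ (contributing an automorphism of order $3$), whereas the diagonal elements $\mathbf c$ and $\mathbf d$ interchange the two $Q_8$-classes within each factor.

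For part (a) I would argue factorwise. Centricity forces $U \le P$, so the central identification inside $L_0$ is harmless and $P \cap L_0$ is recovered from the triple $(P_1, P_2, P_3)$. Since the $L_i$ pairwise commute, the projection of a centric radical $P$ to each factor is $\F_{R_i}(L_i)$-centric radical, and Lemma~\ref{l:normsl2q} forces $P_i = R_i$ or $P_i \in \Q_i$. A subdirect-product (Goursat) argument then rules out any proper diagonal identification between two quaternion factors, since such an identification would either enlarge $C_S(P)$ beyond $Z(P)$ or produce a normal $2$-subgroup of $\Out_\K(P)$; hence $P \cap L_0 = P_1P_2P_3$, giving (a).

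Granting (a), I would treat the three positions of $\bar P$ in turn. If $1 \neq \bar P \le \gen{\bar{\mathbf d}}$, then $\mathbf d \in P$; since $\mathbf d$ sends each $Q_i$ to the other $Q_8$-class $Q_i'$, and $\gen{Q_i, Q_i'} = R_i$, every proper factor is completed to the full $R_i$, forcing $P \cap L_0 = R_0$ and hence $P = R_0\gen{\mathbf d} = C_S(U)$, the first group in (i). If $\bar P = 1$, then $P = P_1P_2P_3 \le R_0$ and every coordinate permutation of $P$ is induced by an element of $X$ lying \emph{outside} $P$, hence is outer; the resulting obstruction to radicality is a normal $2$-subgroup of $\Out_\K(P)$ coming either from the transposition of two equal full factors or, when all three factors are full, from $\mathbf c$. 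One checks that this obstruction vanishes exactly when at most one factor is full, equivalently at least two $P_i \in \Q_i$, which is alternative (ii) (and excludes $P = R_0$ itself). Finally, if $\bar P \nleq \gen{\bar{\mathbf d}}$ then some element of $P$ moves $U$; writing such a $P$ as $P_0\gen{s}$ with $s \notin C_P(U)$ recovers the shape (iii), the remaining possibility being $P = S$, the second group in (i). Here conditions (1) and (2) encode that $s$ normalizes $P_0$, pairs the two swapped coordinates, and leaves $P$ centric and genuinely outside case (i).

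The main obstacle is the radical bookkeeping in case (iii), and condition (iii)(3) in particular, where the crucial point is the distinction between inner and outer coordinate permutations. In (iii) the swap $s$ lies \emph{inside} $P$, so the exchange of the two moved factors is an \emph{inner} automorphism and contributes nothing to $\Out_\K(P) = N_K(P)/PC_K(P)$; this is precisely why configurations with $P_1 = P_2 = R$ full are permitted here although they are forbidden in (ii). When the two swapped factors are proper, $P_i \in \Q_i$ for $i = 1,2$, their common $S_3$-automizer survives diagonally and supplies an outer automorphism of order $3$, so $O_2(\Out_\K(P)) = 1$ with no further hypothesis. When instead the surviving proper factor is the fixed one, $P_3 \in \Q_3$, the needed order-$3$ outer automorphism can only come from $L_3$ acting on the whole of $P$, and whether it does is a genuine condition because $s$ need not centralize it; this is exactly the requirement that $\Out_{L_3}(P)$ not be a $2$-group. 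Were it a $2$-group, then $\Out_\K(P)$ would itself be a $2$-group and $O_2(\Out_\K(P)) = \Out_\K(P) \neq 1$, destroying radicality. Isolating this condition, together with verifying in each listed case that $O_2(\Out_\K(P)) = 1$ holds and that no further subgroups survive, is the technical heart of the argument and refines the corresponding analysis of \cite[Section~10]{AschbacherChermak2010}; the remaining checks --- centricity of each listed $P$ and exhaustiveness --- are routine given the projection framework and Lemma~\ref{l:normsl2q}.
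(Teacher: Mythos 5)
Your proposal takes a genuinely different route from the paper, which does \emph{not} reprove this statement at all: its proof is a citation of \cite[Lemma~10.2]{AschbacherChermak2010}, plus one substantive correction (the added condition (b)(iii)(1)), justified by the order-$4$ counterexample analysed in Remark~\ref{r:102} and in Case~1 of the proof of Proposition~\ref{p:k}. A from-scratch reproof is legitimate, but it means redoing Aschbacher--Chermak's work, and your sketch does not. The pivotal claim in your part (a) --- that the projections $P_i$ of a $\K$-centric radical subgroup are $\F_{R_i}(L_i)$-centric radical, so that Lemma~\ref{l:normsl2q} can be applied factor by factor --- is precisely what has to be proved, and it follows from no product principle: $K$ is not a direct product ($X$ and $\mathbf{d}$ permute and fuse the factors, and $L_0$ is a central product), and $\K$-radicality transfers in neither direction between $P$ and the factors. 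For $l>0$ one has $R_1R_2Q_3\gen{\tau} \in \K^{cr}$ while $R_1R_2Q_3 \notin \K^{cr}$, even though all three of its projections are centric radical in their factors; so there is no intermediate subgroup of $L_0$ to which a factorwise lemma could be applied, and excluding projections strictly between $Q_8$ and $R_i$, as well as proper subdirect intersections with $L_0$, is the bulk of the real argument. ``Goursat'' is not a substitute for it.

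More decisively, your radicality bookkeeping in case (iii) is wrong exactly where the statement is delicate. You assert that when $P_1,P_2 \in \Q_i$ the surviving diagonal $S_3$ gives $O_2(\Out_\K(P))=1$ ``with no further hypothesis'', i.e.\ that (iii)(3) only bites when $P_1=R_1$, $P_2=R_2$; but the proposition imposes (3) whenever $P_3 \in \Q_3$, and it must. Take $l \geq 1$, $s = [1,1,x^{2^{l-1}}]\tau$ and $P = Q_1Q_2Q_3\gen{s}$. Then $s^2 = [1,1,x^{2^l}] \in Q_3$, so (a), (iii)(1) and (iii)(2) hold and the diagonal copy of $N_{L_1}(Q_1)$ does survive; however an element $g \in N_{L_3}(Q_3)$ normalizes $P$ only if $[x^{2^{l-1}},g] \in Q_3$, which fails for every element of order $3$ since the images of $g$ and $x^{2^{l-1}}$ in $N_{L_3}(Q_3)/Q_3 \cong S_3$ do not commute. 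Hence $N_{L_3}(P) = N_{R_3}(Q_3)$, $\Out_{L_3}(P)$ is a $2$-group, and $\Out_\K(P) \cong S_3 \times C_2$ with the normal $C_2$ generated by the image of $[1,1,x^{2^{l-1}}]$: this $P$ passes your criteria but is not $\K$-radical, and it is (iii)(3) that correctly excludes it. This ``twisted transposition'' configuration is the same phenomenon that forced the paper's correction to \cite{AschbacherChermak2010}; relatedly, your gloss of condition (1) as saying that $s$ normalizes $P_0$ misses its point, since $L_0 \norm K$ makes that automatic --- (1) is there to rule out $P/P_0$ cyclic of order $4$ (Remark~\ref{r:102}). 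A smaller slip: for $l=0$ one has $R_i \in \Q_i$, so (b)(ii) \emph{includes} $R_0$, in agreement with Proposition~\ref{p:kl=0}(b) and contrary to your parenthetical exclusion of it.
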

\begin{proof}
This is part of \cite[Lemma~10.2]{AschbacherChermak2010}, namely (c) and (d) of
that lemma together with the statement beginning ``Conversely''.  The
requirement here in (b)(iii)(1) that $s$ square into $P_0$ does not appear in
\cite{AschbacherChermak2010}, but it is needed for the ``if'' part of the
proposition to hold in general. A patch for the proof of the ``if'' part in
\cite[Lemma~10.2]{AschbacherChermak2010} is given later in Remark \ref{r:102}.
\end{proof}

\subsection{The case $l=0$}\label{s:l=0}
An important distinguishing feature of the smallest Benson-Solomon system is
that $R_0$ is normal in the fusion system $\K$. When $l = 0$, this is most
naturally seen over $\mathbb{F}_3$, where a $Q_8$ Sylow $2$-subgroup is normal
in $\SL_2(3)$. Over $\mathbb{F}_5$, the normalizer of a quaternion Sylow
$2$-subgroup of $\SL_2(5)$ is $\SL_2(3)$, which still controls $2$-fusion in
$\SL_2(5)$ (c.f. Lemma~\ref{l:normsl2q}).  It will therefore be convenient to
treat the cases $l=0$ and $l > 0$ separately.  So assume here that $l = 0$. We
adopt the previous notation, except that we set 
\begin{equation}
\label{e:Qforl=0}
Q := R_0=R_1R_2R_3=Q_1Q_2Q_3
\end{equation}
in this smallest case so that $Q$, $R_{1^7}$ and $R_{1^7}'$ correspond with the
groups ``$Q$'', ``$R$'' and ``$R^*$'' considered in \cite[Section
2]{ChermakOliverShpectorov2008}.   

The next proposition lists the $\K$-centric radicals when $l = 0$, and does not
require Proposition~\ref{p:10.2}. 

\begin{Prop}\label{p:kl=0}
Let $l = 0$ and $P \in \K^{cr}$. Then exactly one of the following holds.
\begin{enumerate}
\item[(a)] $P = S$, and $\Out_\K(P) = 1$; 
\item[(b)] $P = Q$, and $\Out_\K(P) \cong (C_3)^3 \overset{-1 \times \wr}{\rtimes} (C_2 \times S_3)$; 
\item[(c)] $P 
= Q\gen{\tau}$, and $\Out_\K(P) \cong (C_3 \times C_3) \overset{-1}{\rtimes} C_2$;
\item[(d)] $P 
= Q\gen{\tau'}$, and $\Out_\K(P) \cong S_3$; or 
\item[(e)] $P = C_S(U) = Q\gen{\mathbf{d}}$, and $\Out_\K(P) \cong S_3$.
\end{enumerate}
\end{Prop}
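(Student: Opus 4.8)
The plan is to prove Proposition~\ref{p:kl=0} by specializing the general classification of $\K$-centric radicals in Proposition~\ref{p:10.2} to the case $l=0$, where the structure simplifies dramatically because $R_i \cong Q_8$ for each $i$, so that $R_i$ itself is one of the $Q_8$-subgroups and $\Q_i = \{R_i\}$. The key structural fact, noted in the surrounding text, is that $R_0 = Q$ is normal in $\K$ when $l=0$. By Lemma~\ref{l:normcentrad}, every $\K$-centric radical subgroup must therefore contain $Q$. This immediately collapses condition (a) of Proposition~\ref{p:10.2}: each $P_i$ must equal $R_i$ (there is no proper $Q_8$ to choose), so $P \cap L_0 = R_0 = Q$ and hence $Q \leq P$. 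Consequently every $P \in \K^{cr}$ satisfies $Q \leq P \leq S$, and since $|S/Q| = 4$ with $S/Q \cong C_2 \times C_2$ generated by the images of $\mathbf{d}$ and $\tau$, there are only a handful of candidate subgroups to examine.

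First I would enumerate the subgroups $P$ with $Q \leq P \leq S$: these are $Q$ itself, the three index-$2$ subgroups $Q\gen{\tau}$, $Q\gen{\tau'}$, $Q\gen{\mathbf{d}} = C_S(U)$ (using Notation~\ref{N:Knotation2}, where $\tau' = \mathbf{d}\tau$ and $\mathbf{d}$ centralizes $U$), and $S$ itself. This matches precisely the five candidates (a)--(e) in the statement, recalling $R = \gen{\tau}$ up to the relevant identification and $R^* $ contributing $\tau'$ as indicated. The main work is then twofold: for each candidate verify that it genuinely is $\K$-centric radical (ruling nothing out and nothing in erroneously), and compute its outer automorphism group $\Out_\K(P)$ in $\K = \F_S(K)$. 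Since $\K$ is the honest fusion system of the finite group $K$, the automorphism group $\Aut_\K(P) = \Aut_{N_K(P)}(P) = N_K(P)/C_K(P)$ is computed directly inside $K$, and $\Out_\K(P) = \Aut_\K(P)/\Inn(P)$.

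For the computations I would proceed from the explicit description of $K = \hat K/Z(\hat K)$ as built from $(\SL_2(5) \times \SL_2(5) \times \SL_2(5))$ extended by $S_3$ and the diagonal element $\mathbf{c}$. The case $P = S$ is forced to have $\Out_\K(S) = 1$ since $S \in \Syl_2(K)$ and $\Aut_\K(S) = N_K(S)/C_K(S)$ is a $2$-group, hence equals $\Inn(S)$. For $P = Q = R_0$, the normalizer $N_K(Q)$ contains $N_0 X C_{N_0}(X)$-type contributions; here the outer action comes from the odd-order diagonal tori normalizing each $Q_8 \leq \SL_2(5)$ together with the permutation action of $X \cong S_3$ and the involution $\mathbf{d}$, yielding the stated group $(C_3)^3 \rtimes (C_2 \times S_3)$ with the indicated twisted action. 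For the index-$2$ subgroups I would compute $N_K(P)$ by intersecting $N_K(Q)$ with the stabilizer of the extra generator, tracking which of the three diagonal $C_3$-factors and which part of $C_2 \times S_3$ survives; the key point is that adding $\tau$, $\tau'$, or $\mathbf{d}$ cuts the torus contribution down to a $C_3 \times C_3$ (for $\tau$) and reshapes the $S_3$-action to give the groups in (c), (d), (e). I would verify radicality in each case by confirming $O_2(\Out_\K(P)) = 1$ directly from the computed structure, and centricity by checking $C_S(P) = Z(P)$.

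The main obstacle I expect is the precise determination of $\Out_\K(Q)$ and the index-$2$ cases, specifically identifying the twisted semidirect product structure $(C_3)^3 \overset{-1 \times \wr}{\rtimes}(C_2 \times S_3)$ and its analogues. The subtlety is that the involution $\mathbf{d}$ (which inverts the torus) acts on each diagonal $C_3$ by inversion, while $X \cong S_3$ permutes the three factors, and these two actions must be disentangled and shown to combine as claimed; moreover one must be careful that passing to $K = \hat K/Z(\hat K)$ does not alter these automorphism groups, and that the element $\mathbf{c}$ (which fuses the two $\K$-classes of $Q_8$ inside each $R_i$ by Lemma~\ref{l:quatprop}) contributes correctly to the normalizers. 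Confirming the exact twisting notation, rather than merely the isomorphism type, is where the careful bookkeeping lies; the centric and radical verifications themselves are routine once the normalizer structure is pinned down.
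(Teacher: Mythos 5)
Your plan is essentially the paper's own proof. The paper likewise uses the fact that $Q$ is a centric normal subgroup of $\K$ together with Lemma~\ref{l:normcentrad} to force $Q \leq P$, enumerates the five subgroups between $Q$ and $S$ via the complement $\gen{\mathbf{d},\tau}$, computes the automizers inside $K$ from the structure of $\Out_\K(Q) \cong (C_3)^3 \rtimes (C_2 \times S_3)$ (with $\mathbf{d}$ inverting the base and $X$ permuting coordinates), and then observes that none of the resulting outer automorphism groups has a nontrivial normal $2$-subgroup, so all five candidates are indeed centric radical. (The paper explicitly notes that Proposition~\ref{p:10.2} is not needed here; your appeal to it only re-derives $Q \leq P$, a cosmetic difference.)

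However, one step of your sketch is wrong as written, even though its conclusion is true. You justify $\Out_\K(S) = 1$ by claiming that $\Aut_\K(S) = N_K(S)/C_K(S)$ is a $2$-group because $S \in \Syl_2(K)$. Sylowness only gives that $\Out_\K(S) = N_K(S)/SC_K(S)$ has \emph{odd} order, not that it vanishes: for $K = A_4$ and $S \cong C_2 \times C_2$ the automizer is $C_3$. The repair is to treat $P = S$ exactly like your other four cases: since $L_0 \norm K$ and $S \cap L_0 = Q$, one has $N_K(S) \leq N_K(Q)$, and the image of $N_K(S)$ in $\Out_\K(Q)$ must normalize $\Out_S(Q) \cong S/Q = \gen{\mathbf{d},\tau}$; because $\mathbf{d}$ inverts the base $(C_3)^3$, a direct check in $(C_3)^3 \rtimes (C_2 \times S_3)$ shows this normalizer is $\gen{\mathbf{d},\tau}$ itself, whence $N_K(S) \leq S\,C_K(Q)$, and odd-order elements of $C_K(Q)$ normalizing $S$ centralize $S$ (Thompson's $A \times B$ lemma, using $C_S(Q) = Z(Q)$), giving $\Out_\K(S) = 1$. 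A minor further caution: your aside that ``$R = \gen{\tau}$ up to the relevant identification'' is not literally correct ($R$ has order $2^7$); what the statement asserts is $QR = Q\gen{\tau}$ and $QR^* = Q\gen{\tau'}$, i.e.\ $R$ and $R^*$ cover $\gen{\bar\tau}$ and $\gen{\bar\tau'}$ modulo $Q$ — but since your enumeration of candidates proceeds from the complement $\gen{\mathbf{d},\tau}$, nothing else in your argument depends on this.
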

\begin{proof}
As $Q$ is a centric normal $2$-subgroup of $\K$, it is contained in
every member of $\K^{cr}$ by Lemma~\ref{l:normcentrad}.  Now $S/Q$ is
a four group (the four group $\gen{\mathbf{d}, \tau}$ is a complement to
$Q$ in $S$), so there are only five possible centric radical
subgroups.  Since $O^2(K) \cap S = Q$, if two distinct subgroups of
$S$ containing $Q$ were $\K$-conjugate, then two distinct subgroups of
the abelian group $S/Q$ would be $K/O^2(K) \cong
S/Q$-conjugate.  Since this is not the case, no two distinct subgroups
of $S$ containing $Q$ are $\K$-conjugate.  Next, from the definition
of $U$, both $Q$ and $\mathbf{d}$ centralize $U$ while $\tau$ does
not, so we must have $Q\gen{\mathbf{d}}=C_S(U)$. This shows the
equality in (e).

The structure of the outer automorphism groups are computable from knowledge of
$\Out_\K(Q)$: note that from the structure of $K$ (cf.
Lemma~\ref{l:normsl2q}), 
\[
\Out_{\K}(Q) \cong (C_3)^3 \rtimes (C_2 \times S_3)
\]
is a split extension of the wreath product $C_3 \wr S_3$ by the group generated
by the class $\left[c_\mathbf{d}\right] \in \Out_\K(Q)$ of conjugation
by $\mathbf{d}$ acting by inversion on the base. As $Q$ is weakly
$\K$-closed and centric, $\Out_\K(P) \cong
N_{\Out_\K(Q)}(\Out_P(Q))/\Out_P(Q)$ for each
overgroup $P$ of $Q$ in $S$ by Lemma \ref{l:abovewc}. From a
computation in the group $(C_3)^3 \overset{-1 \times \wr}{\rtimes} (C_2 \times
S_3)$ one sees for example that
\[
N_{\Out_\K(Q)}(\langle \left[ c_\tau \right] \rangle ) =
C_{\Out_\K(Q)}(\langle \left[ c_\tau \right] \rangle ) \cong 
(C_3)^2 \overset{-1, 1}{\rtimes} (C_2 \times C_2), 
\] 
where the acting group is given by $\langle \left[c_{\mathbf{d}} \right]\rangle
\times \langle \left[c_{\tau} \right]\rangle$. This shows that
$\Out_\K(Q\gen{\tau}) = N_{\Out_\K(Q)}(\langle \left[ c_\tau
\right] \rangle )/\gen{\left[ c_\tau \right]} \cong (C_3 \times C_3)
\overset{-1}{\rtimes} C_2$ as claimed. Cases (d) and (e) are handled similarly.
Visibly no resulting outer automorphism group has a nontrivial normal
$2$-subgroup, so all the candidate subgroups are $\K$-centric radical.
\end{proof}

\begin{Prop}\label{p:sol3}
Let $l = 0$.  Then, up to conjugacy, the $\K$-, $\H$-, and $\F$-centric radical
subgroups of $S$ together with their orders and automorphism groups appear in
Table \ref{t:1}, where a `$-$' indicates that the subgroup is not
centric radical in that fusion system.

\begin{table}[h]
\renewcommand{\arraystretch}{1.5}
\centering
\caption{$\Sol(5)$-conjugacy classes of $\Sol(5)$-centric radical subgroups}
\label{t:1}
\mbox{}\newline
\begin{tabular}{|c|c|c|c|c|}
\hline
$P$                & $|P|$  & $\Out_\H(P)$           & $\Out_\K(P)$    & $\Out_\F(P)$     \\ \hline
$S$                & $2^{10}$ & $1$                    & $1$             & $1$     \\ \hline
$Q$                & $2^8$  & $(C_3)^3 \rtimes (C_2 \times C_2)$ & $(C_3)^3 \rtimes (C_2 \times S_3)$ & $(C_3)^3 \rtimes (C_2 \times S_3)$ \\ \hline
$Q\gen{\tau}$ & $2^9$  & $(C_3 \times C_3) \overset{-1}{\rtimes} C_2$                & $(C_3 \times C_3) \overset{-1}{\rtimes} C_2$         & $(C_3 \times C_3) \overset{-1}{\rtimes} C_2$    \\ \hline
$Q\gen{\tau'}$ & $2^9$  & $S_3$              & $S_3$       & $S_3$       \\ \hline
$C_S(U)$              & $2^9$  & $-$                    & $S_3$       & $S_3$   \\ \hline
$R_{1^7}$                & $2^7$  & $A_7$              & $-$             & $A_7$      \\ \hline
$R_{1^7}'$              & $2^6$  & $S_6$              & $-$             & $S_6$       \\ \hline
$C_S(E/Z)$ & $2^9$  & $S_3$              & $-$             & $S_3$      \\ \hline
$C_S(E)$ & $2^7$  & $-$                    & $-$             & $\GL_3(2)$  \\ \hline
$A$           & $2^4$  & $-$                    & $-$             & $\GL_4(2)$    \\ \hline
\end{tabular}
\end{table}

\end{Prop}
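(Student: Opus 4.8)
The plan is to build Table~\ref{t:1} by merging the three classifications already at hand: Proposition~\ref{p:fcr}, listing $\F^{cr}$ up to $\F$-conjugacy; Proposition~\ref{p:kl=0}, listing (for $l=0$) the classes in $\K^{cr}$ with their groups $\Out_\K$; and Proposition~\ref{p:ngpnleqk} with $l=0$ (so that case~(c) is vacuous), listing the subgroups $P\in\H^{cr}$ with $N_H(P)\nleq K$ together with $\Out_\F(P)$. Proposition~\ref{p:fcr} sorts $\F^{cr}$ into four kinds: the subgroups $A$ and $C_S(E)$, whose groups $\Out_\F\cong\GL_4(2)$ and $\GL_3(2)$ are recorded there; the $P\in\K^{cr}$ with $N_G(P)\le K$; and the $P\in\H^{cr}$ with $N_G(P)\le H$. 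What pins down the $\Out_\F$ column on the last two kinds is that, since $C_G(P)\le N_G(P)$, a containment $N_G(P)\le K$ forces $\Aut_\F(P)=N_G(P)/C_G(P)=\Aut_K(P)$ and hence $\Out_\F(P)=\Out_\K(P)$, and symmetrically $\Out_\F(P)=\Out_\H(P)$ when $N_G(P)\le H$.

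I would first verify that the five classes $S$, $Q=R_0$, $QR=Q\gen{\tau}$, $QR^*=Q\gen{\tau'}$, and $C_S(U)=Q\gen{\mathbf d}$ of Proposition~\ref{p:kl=0} all satisfy $N_G(P)\le K$, so that they are genuinely $\F$-centric radical and $\Out_\F=\Out_\K$ may be read off there. Dually, the three classes $R\cong D_8*D_8*D_8$, $R^*\cong C_4*D_8*Q_8$, and the index-two subgroup $O_2(N_\H(E))$ --- which I identify with the product $RR^*$ --- of Proposition~\ref{p:ngpnleqk} have $N_G(P)\le H$ (guaranteed by $N_H(P)\nleq K$), so on these rows $\Out_\F=\Out_\H$ equals $A_7$, $S_6$, $S_3$. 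Because Proposition~\ref{p:kl=0} gives a \emph{complete} list of $\K^{cr}$-classes, the $\Out_\K$ column is finished off with ``$-$'' on the rows $R$, $R^*$ (of the wrong orders $2^7$, $2^6$), $RR^*$ (distinguished from the order-$2^9$ $\K$-side subgroups by $N_G(RR^*)\le H$), $C_S(E)$ and $A$.

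The heart of the matter is the $\Out_\H$ column. Here I would use that all ten subgroups are $\F$-centric and hence also $\H$-centric (an $\H$-conjugacy class sits inside an $\F$-conjugacy class), so that being $\H$-centric radical is equivalent to $O_2(\Out_\H(P))=1$. Writing $z=[1,1,-1]$ for the generator of $Z=Z(S)$, the definition $\H=C_\F(Z)$ gives $\Aut_\H(P)=\{\phi\in\Aut_\F(P):\phi(z)=z\}$; the subtlety is that, although $z$ is $2$-central in $S$, an abstract $\F$-automorphism of $P$ can move $z$ inside $Z(P)$, so this is a genuine constraint. As $\Inn(P)$ fixes $z$, this descends to the uniform formula $\Out_\H(P)=\Stab_{\Out_\F(P)}(z)$, and the outcome is dictated by the action of $\Out_\F(P)$ on $Z(P)$. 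On the rows with $Z(P)=\gen{z}$, namely $S$, $QR$, $QR^*$, $R$, $R^*$, $RR^*$, there is nothing to move and $\Out_\H=\Out_\F$; on the remaining rows one computes the stabiliser of $z$. For $Q$ the factor $S_3\le\Out_\F(Q)\cong(C_3)^3\rtimes(C_2\times S_3)$ permutes the three involutions of $Z(Q)=U$, so $\Stab(z)=(C_3)^3\rtimes(C_2\times C_2)$, which has $O_2=1$ and is the listed group. For $C_S(U)$, $C_S(E)$ and $A$ the group $\Out_\F(P)$ acts on a natural module ($U$, $E$, or $A$, each containing $z$) as $S_3=\GL_2(2)$, $\GL_3(2)$ and $\GL_4(2)$ respectively, and the stabiliser of the point $z$ is a maximal parabolic with nontrivial $O_2$; these are therefore not $\H$-radical and receive a ``$-$''.

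Finally I would check that the ten classes are pairwise non-$\F$-conjugate, the only delicate point being the three order-$2^9$ subgroups $QR^*$, $C_S(U)$ and $RR^*$, all with $\Out_\F\cong S_3$: here $QR^*$ and $C_S(U)$ are distinct members of the list in Proposition~\ref{p:kl=0}, while $RR^*$ is separated from both by $N_G(RR^*)\le H$ against $N_G\le K$. I expect the real obstacle to lie in the merging carried out in the second and third paragraphs: checking the $N_G$-containments that localise each class to $H$ or to $K$, making the identification $RR^*=O_2(N_\H(E))$ and recognising that $S$, $Q$, $QR$, $QR^*$ are radical in both factors, and performing the orbit-of-$z$ and $O_2$-triviality computations explicitly inside $\Spin_7(5)$ and $K$. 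Once $\Out_\F(P)$ and its action on $Z(P)$ are known, the $\H$- and $\K$-columns drop out mechanically.
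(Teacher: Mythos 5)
Your stabiliser formula $\Out_\H(P)=\Stab_{\Out_\F(P)}(z)$ for $\F$-centric $P$ containing $z$ is correct (it follows from $\H=C_\F(Z)$ exactly as you argue), and it does reproduce the $\Out_\H$ entries on the ten listed rows; your assembly of the $\K$- and $\F$-columns also matches the paper's use of Propositions~\ref{p:fcr} and~\ref{p:kl=0}. But there is a genuine gap in the $\H$-column, which is where your route departs from the paper. The proposition claims that the marked rows are a \emph{complete} list of $\H^{cr}$ up to $\H$-conjugacy, and your method is structurally unable to prove completeness: you only ever test the $\H$-radical status of subgroups already known to lie in $\F^{cr}$, so you cannot rule out an $\H$-centric radical subgroup that fails to be $\F$-centric radical. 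Since $\Out_\H(P)$ is merely a subgroup of $\Out_\F(P)$, the condition $O_2(\Out_\H(P))=1$ says nothing about $O_2(\Out_\F(P))$, so $\H$-radical does not imply $\F$-radical; and in the Benson--Solomon systems with $l>0$ such subgroups really occur --- for example $Q_1R_2R_3$ is an $\H$-centric radical class in Table~\ref{t:3} but appears in no row of Table~\ref{t:2}. Thus the inclusion ``every $\H^{cr}$-class is represented in Table~\ref{t:1}'' is a substantive fact for $l=0$ that needs its own proof. The paper supplies it by citing the classification of $\H^{cr}$ for $\H=\F_S(\Spin_7(q))$ in \cite[Lemma~2.1]{ChermakOliverShpectorov2008}; your proposal has no substitute for this input, and the stabiliser formula cannot provide one.

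The same gap undermines your final non-conjugacy step. The paper separates the three order-$2^9$ classes with $\Out_\F\cong S_3$ by observing that their centres are $Z$ or $U$, so any $\F$-isomorphism between two of them would lie in $\H=C_\F(Z)$ or $\K=N_\F(U)$, and then invoking the \emph{independently established} $\H$- and $\K$-classifications. You can separate $QR^*$ from $C_S(U)$ this way (their centres have orders $2$ and $4$), but not $QR^*$ from $RR^*$ (both centres equal $Z$): your $\H$-column was itself derived from the list of $\F$-classes, so using it to show these $\F$-classes are distinct would be circular, and the substitutes you offer do not work --- being distinct $\K$-classes does not preclude $\F$-conjugacy, and comparing $N_G(QR^*)\le K$ with $N_G(RR^*)\le H$ yields no contradiction, since a conjugation carries $K$ to a conjugate $K^g$ rather than to $K$. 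An independent determination of $\H^{cr}$ (the cited lemma, or a direct computation in $\Spin_7(5)$ in the spirit of Proposition~\ref{p:h}) is what closes both holes.
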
   
\begin{proof}
By Proposition~\ref{p:kl=0}, the column for $\K$ is correct. By
\cite[Lemma~2.1]{ChermakOliverShpectorov2008} and
\cite[Lemma~A.11(e,f)]{LeviOliver2002}, the column for $\H$ is correct.  We work
up to $\F$-conjugacy in what follows. Let $P \in \F^{cr}$.  By
Proposition~\ref{p:fcr}, either $P$ is listed in the last two rows of
Table~\ref{t:1}, or one of the following holds: (1) $P \in \K^{cr}$ and
$\Out_\F(P) = \Out_\K(P)$, or (2) $N_G(P) \nleq K$, $P \in \H^{cr}$ and
$\Out_\F(P) = \Out_{\H}(P)$.  If (1) holds, then $P$ is listed in the
first five rows of the table by Proposition~\ref{p:kl=0}.  If
(1) does not hold, then (2) holds, $P$ is listed in the next three
rows of the table, where the entries follow from Proposition
\ref{p:ngpnleqk}(a,b,d).

That no additional $\F$-conjugacy can occur between these subgroups can be seen
in several ways, one of which as follows.  Only three subgroups have pairwise
equal orders and isomorphic outer automorphism groups in $\F$, namely
$Q\gen{\tau'}$, $C_S(U)$, and $C_S(E/Z)$.

By Lemma~\ref{l:cse}, $C_S(E/Z)$ has center $Z$. 
Likewise, since $Z(Q) = U$, we have $Z(Q\gen{\tau'}) =
C_{U}(\tau') = Z$.  So as $U \leq Z(C_S(U))$, it follows that $C_S(U)$ is not
$\F$-conjugate to either of the other two subgroups.

Finally, note that $C_S(E/Z)$ contains the torus $T$. On the other hand, from
the description of $T$ in Section~\ref{s:torus}, we see that $Q \cap T
= \gen{[a,1,1],[1,a,1],[c^2,c^2,c^2]}$ is of index $2$ in $T$. As each element
in the coset $Q\tau'$ is nontrivial on $Q \cap T$ and $T$ is
abelian, it follows that $Q\gen{\tau'} \cap T$ is still of index $2$
in $T$.  So $C_S(E/Z)$ contains $T$, but $Q\gen{\tau'}$ does not.
Since $T$ is weakly $\F$-closed (Lemma~\ref{l:2torus}), the subgroups
$C_S(E/Z)$ and $Q\gen{\tau'}$ are not $\F$-conjugate.
\end{proof} 

We end this subsection with two lemmas in the case $l=0$ which will be needed
later. 

\begin{Lem}\label{l:qrweakly}
Each member of $\F^{cr}-\{A\}$ is weakly $\F$-closed when $l=0$.
\end{Lem}
\begin{proof}
The subgroup $S$ is clearly weakly closed, and $C_S(E)$, $C_S(U)$, and
$C_S(E/Z)$ were shown to be weakly $\F$-closed in Lemmas~\ref{l:cse},
\ref{l:csu}, and \ref{l:csez}. Let $P$ be one of the remaining subgroups, but
not $A$. By Proposition~\ref{p:sol3}, $P$ is centric and radical in $\H$, and
either $P = Q$ or $Z(P) = Z$.  The quotient $P/Z$ is centric and radical in
$\H/Z$ by \cite[Lemma~A.11(e)]{LeviOliver2002}.  Hence, $P/Z$ is weakly
$\H/Z$-closed by \cite[Lemma~2.1]{ChermakOliverShpectorov2008}.  It follows
that $P$ is weakly $\H$-closed.  Since $Q$ is normal in $\K$, it is weakly
$\K$-closed. Hence, $Q$ is weakly $\F$-closed since $\H$ and $\K$ are fusion
systems over $S$ which generate $\F$ (end of Section~\ref{s:conssol}). 

We are reduced to the case in which $Z(P) = Z$. Assume on the contrary that $P$
is not weakly $\F$-closed. By Alperin's Fusion Theorem
\cite[Theorem~A.10]{BrotoLeviOliver2003}, there is an overgroup $Y \leq S$ of
$P$ and an automorphism $\alpha \in \Aut_\F(Y)$ such that $P^\alpha \neq P$.
Then $Z(Y) \leq Z(P) = Z$, as $P$ is centric, so that $Z(Y) = Z$ is centralized
by $\alpha$. That is, $\alpha \in \H$. But then $P^\alpha = P$ by the previous
paragraph, a contradiction. 
\end{proof}

\begin{Lem}\label{l:QRandQR*}
$QR_{1^7} = Q\gen{\tau}$ and $QR_{1^7}' =
Q\gen{\tau'}$ when $l = 0$. 
\end{Lem}
\begin{proof}
This is a statement depending on $H$ only. Since $l=0$, $q = 5$. Write
$\bar{H}$ for $H/Z$. Fix a decomposition
\[
V = \ell_1 \perp \ell_2 \perp \ell_3 \perp \gen{x_7}
\]
with the following properties (\cite[cf. 4.4,4.6]{AschbacherChermak2010}):
\begin{enumerate}
\item each $\ell_i = \gen{x_{2i-1}, x_{2i}}$ is a hyperbolic line (i.e.,
$\qq(x_{2i-1}) = 0 = \qq(x_{2i})$, $\bb(x_{2i-1}, x_{2i}) = 1$), and
$\qq(x_7) = 1$. 
\item $\ell_1 \perp \ell_2 = \gen{x_1,x_4} \oplus \gen{x_3,x_2}$, with each
summand on the right side a natural $\FF_5L_1$-module; in particular, 
$[a,1,1]$ and $[b,1,1]$ act via the matrices 
\begin{align*}
[a,1,1] &\mapsto \smat{2&0&0&0\\0&-2&0&0\\0&0&2&0\\0&0&0&-2} & [b,1,1] &\mapsto \smat{0&0&0&-1\\0&0&1&0\\0&-1&0&0\\1&0&0&0}
\end{align*}
with respect to the basis $\{x_1,x_2,x_3,x_4\}$. 
\item $\ell_1 \perp \ell_2 = \gen{x_1,x_3} \oplus \gen{x_4,x_2}$, with each
summand on the right side a natural $\FF_5L_2$-module; in particular, $[1,a,1]$
and $[1,b,1]$ act via the matrices 
\begin{align*}
[1,a,1] &\mapsto \smat{2&0&0&0\\0&-2&0&0\\0&0&-2&0\\0&0&0&2} & [1,b,1] &\mapsto \smat{0&0&-1&0\\0&0&0&1\\1&0&0&0\\0&-1&0&0}
\end{align*}
with respect to the basis $\{x_1,x_2,x_3,x_4\}$. 
\item $\ell_3 \perp \gen{x_7}$ is the $3$-dimensional orthogonal module for
$L_3 \cong \Spin_3(5)$. We may view it as the module in which $L_3$ acts by
conjugation on $2\times 2$ trace zero matrices $M_2^0(\FF_5)$ with quadratic
form given by the determinant, via the isometry $M_2^0(\FF_5) \longrightarrow
\ell_3 \perp \{x_7\}$ defined by
\[
\left\{\smat{0&0\\2&0}, \smat{0&-1\\0&0}, \smat{2&0\\0&-2}\right\} \longmapsto \{x_5,x_6,x_7\}.
\]
Under this identification, 
$[1,1,a]$ acts via the matrix $\diag(-1, -1, 1)$ with respect to the ordered
basis $\{x_5,x_6,x_7\}$, and 
$[1,1,b]$ acts via the matrix $\smat{0&2&0\\-2&0&0\\0&0&-1}$. 
\end{enumerate} 

Next, define $u_{j}$ and $v_{j}$ via
\begin{align*}
u_{2i-1} &= x_{2i-1}-2x_{2i}, &  v_{2i-1} &= u_{2i-1}+u_{2i},\\
u_{2i}   &= -2x_{2i-1}+x_{2i}, &  v_{2i} &= u_{2i-1}-u_{2i},\\
u_7      &= x_7,              &       v_7 &= u_7.
\end{align*}
Thus, $\{u_1,\dots,u_7\}$ is an orthonormal basis for $V$, and
$\{v_1,\dots,v_7\}$ is an orthogonal basis such that $\qq(v_i) = 2 \notin
\FF_5^{\times 2}$ for each $i = 1,\dots,6$. The decompositions
\[
\Lambda = \{\gen{u_i} \mid i \in \{1,\dots,7\}\}, \qquad \text{ and } \qquad \Lambda' = \{\gen{v_i} \mid i \in \{1,\dots,7\}\}
\]
of $V$ are therefore of the type appearing in Proposition~\ref{p:ngpnleqk}(a)
and (b), respectively. The centralizers of the decompositions are 
\begin{align*}
C_H(\Lambda) &= \{e \in \Spin_7(5) \mid (u_i)\bar{e} = \pm u_i, i = 1,\dots,7\}, \text{ and}\\
C_{H}(\Lambda') &= \{f \in \Spin_7(5) \mid (v_i)\bar{f} = \pm v_i, i = 1,\dots,7\}.
\end{align*}

Observe from the definition of the $v_i$ that $C_H(\Lambda) \leq
N_H(\Lambda')$.  Similarly, it is a straightforward computation to see using
(2)-(4) that $Q$ acts on the sets $\Lambda$ and $\Lambda'$, i.e.
$Q \leq N_H(\Lambda)$ and $Q \leq N_H(\Lambda')$. It follows
that $QC_H(\Lambda)C_H(\Lambda)'$ is a $2$-subgroup of $H$.  Hence, we
may choose $h \in H$ with $(QC_H(\Lambda)C_H(\Lambda'))^h \leq S$. But
$Q \leq S$ and so $Q^h = Q$ by
Lemma~\ref{l:qrweakly}.  Likewise, it follows from Lemma~\ref{l:qrweakly} that
$C_H(\Lambda)^h = R_{1^7}$ and $C_H(\Lambda')^h = R_{1^7}'$. Replacing
$S$ with $S^{h^{-1}}$ if necessary, we may assume that $R_{1^7} =
C_H(\Lambda)$ and $R_{1^7}' = C_H(\Lambda')$. 

For a subset $I \subseteq \{1,\dots,7\}$, write $e_I$ for a fixed element of
$C_H(\Lambda)$ which maps $u_i \mapsto -u_i$ if $i \in I$, and which fixes
$u_i$ otherwise. When $I \subseteq \{1,\dots,6\}$, denote by $f_I$ an analogous
element of $C_H(\Lambda')$ with respect to the $v_i$'s.  A computation of the
action of $Q$ with respect to the bases $\{u_i \mid 1 \leq i \leq 7\}$
and $\{v_i \mid 1 \leq i \leq 7\}$ using (2)-(4) yields
\begin{align*}
Q \cap R_{1^7} &= \gen{[-1,1,1], [b,ab,1], [ab,b,1], [1,1,a], [1,1,b]}\\ 
                           &= \gen{e_{1234},  e_{13},  e_{14},  e_{56},  e_{57}}\\
                           &\cong C_2 \times (Q_8*Q_8),
\end{align*}
and
\begin{align*}
Q \cap R_{1^7}' &= \gen{[-1,1,1], [b,b,1], [ab,ab,1], [1,1,a]}\\
                             &= \gen{f_{1234}, f_{23},  f_{13},  f_{56}}\\
                             &\cong C_2 \times (Q_8*C_4),
\end{align*}
where here we have used Lemma~\ref{l:invlift} and the identity $[e,f] = (ef)^2$
to determine the isomorphism types.  The order $|Q \cap R_{1^7}| =
2^6$, and so $|QR_{1^7}| =
\frac{|Q||R_{1^7}|}{|Q \cap R_{1^7}|}= 2^9$.
Similarly, $|Q \cap R_{1^7}'| = 2^5$, so also
$|QR_{1^7}'| = 2^9$. 

We have shown that $\{QR_{1^7}, QR_{1^7}'\} \subset
\{Q\gen{\mathbf{d}}, Q\gen{\tau}, Q\gen{\tau'}\}$.
The involution $e_{4567} \in R_{1^7}-Q$ (Lemma~\ref{l:invlift}) acts as
$-1$ on $\ell_3 \perp \gen{x_7}$, so centralizes $L_3$. It also interchanges
the one-dimensional subspaces $\gen{x_3}$ and $\gen{x_4}$ while centralizing
the line $\ell_1$, and hence from (2)-(3) it interchanges $L_1$ and $L_2$ by
conjugation. It follows that $QR_{1^7} = Q\gen{\tau}$,
since neither $Q\gen{\mathbf{d}}$ nor $Q\gen{\tau'}$ have
such an element.  

Finally, we show that $QR_{1^7}' = Q\gen{\tau'}$. First, since $f_{1234} \in
U-Z$ does not commute with $f_{45}$ by Lemma~\ref{l:invlift}, it follows that
$QR_{1^7}'$ is not contained in $C_S(U) = Q\gen{\mathbf{d}}$.  Next, observe
that in contrast to the previous case, $C_{R_{1^7}'}(L_3) =
C_{R_{1^7}'}(\ell_3+\gen{x_7})$ (for example, note that ``$f_{4567}$'' has
nontrivial spinor norm). The group $C_{R_{1^7}'}(L_3) = \gen{f_{12}, f_{13},
f_{14}}$ induces the permutation group $\gen{(1,2)(3,4)}$ on
$\{\gen{x_1},\gen{x_2},\gen{x_3},\gen{x_4}\}$, and hence
$C_{R_{1^7}'}(L_3)$ acts on $L_1$ and $L_2$ by (2)-(3).  Therefore,
$QR_{1^7}'$ has no element centralizing $L_3$ and interchanging
$L_1$ and $L_2$, and so $QR_{1^7}' = Q\gen{\tau'}$.
\end{proof}

\subsection{The case $l > 0$}\label{s:kcr}
In this subsection, we determine a set of representatives for the
$\D$-conjugacy classes of elements in  $\D^{cr}$ for $\D \in \{\K,\H,\F\}$, in
the case when $l > 0$. First, we treat the case $\D=\K$.

\begin{Prop}\label{p:k}
Suppose that $l > 0$. There are eleven $\K$-conjugacy classes of elements of
$\K^{cr}$. Representatives of these classes together with their outer
automorphism groups in $\K$ are listed in Table \ref{t:4}.
\end{Prop}

\begin{table}[h]
\centering
\renewcommand{\arraystretch}{1.5}
\caption{$\K$-conjugacy classes of $\K$-centric radical subgroups, $l > 0$}
\label{t:4}
\begin{tabular}{|c|c|c|}
\hline
$P$                             & $|P|$    & $\Out_\K(P)$                                              \\ \hline
$S$                             & $2^{10+3l}$ & $1$                                                   \\ \hline
$C_{S}(U)$                          & $2^{9+3l}$ & $S_3$                                     \\ \hline
$Q_1Q_2Q_3$                     & $2^8$    & $S_3 \wr S_3$                   \\ \hline
$Q_1Q_2Q_3'$                    & $2^8$    & $(S_3 \wr C_2) \times S_3$ \\ \hline
$Q_1Q_2R_3$                     & $2^{8+l}$    & $S_3 \wr C_2$                    \\ \hline
$Q_1Q_2'R_3$                    & $2^{8+l}$    & $S_3 \wr C_2$                 \\ \hline
$Q_1Q_2Q_3\langle\tau\rangle$   & $2^9$    & $S_3 \times S_3$                        \\ \hline
$Q_1Q_2Q_3'\langle\tau\rangle$  & $2^9$    & $S_3 \times S_3$                        \\ \hline
$Q_1Q_2R_3\langle\tau\rangle$   & $2^{9+l}$ & $S_3$                                      \\ \hline
$Q_1Q_2'R_3\langle\tau'\rangle$ & $2^{9+l}$ & $S_3$                           \\ \hline
$R_1R_2Q_3\langle\tau\rangle$   & $2^{9+2l}$ & $S_3$                                       \\ \hline
\end{tabular}
\end{table}

\begin{proof}
Let $P \leq S$ be a centric radical subgroup of $\K$, taken up to
$\K$-conjugacy. We proceed through the possibilities in the description of
$\K^{cr}$ given by Proposition~\ref{p:10.2} and refer to the labelings of the
three cases given there.  If $P$ occurs in (b)(i), then $P$ is listed in the
first two rows of the table. By Lemma~\ref{l:s}, $\Aut_\K(S)
= \Inn(S)$ so that $\Out_\K(S) = 1$. Also, $C_S(U) = R_0\gen{\mathbf{d}}$, so
that $\Out_\K(C_S(U)) \cong S_3$ is induced by $X$. 

Consider a subgroup $P$ in (b)(ii). First assume that $P_i \in \Q_i$ for all
$i$.  Upon conjugating in $L_0$, we may assume that $P_i = Q_i$ or $Q_i'$ for
each $i$. Conjugating by $\mathbf{d}$, which interchanges $Q_i$ and $Q_i'$ for
each $i$, we may assume that there is at most one $Q_i'$ among the $P_i$'s.
Finally, we may conjugate by elements of $X$ to see that $P$ is one of the
subgroups in rows 3 and 4 of the table.

To compute $\Out_{\K}(P)$, observe that if $t \in L_0X$, then $P^t$ and $P$
have the same number of components $P_i^t$ which are $L_i$-conjugate to $Q_i$,
while $P^\mathbf{d}$ has three minus the number for $P$. This shows that
$N_{K}(P) = N_{L_{0}X}(P)$.  Thus, if $P = Q_1Q_2Q_3$, then $N_K(P) =
(N_{L_1}(Q_1)N_{L_2}(Q_2)N_{L_3}(Q_3))X$ and we see that $\Out_\K(P) \cong S_3
\wr S_3$ by Lemma~\ref{l:normsl2q}. Likewise if $P = Q_1Q_2Q_3'$, then $N_K(P)
= N_{L_1}(Q_1)N_{L_2}(Q_2)N_{L_3}(Q_3')\gen{\tau}$, so that $\Out_\K(P) \cong
(S_3 \wr C_2) \times S_3$.

Next, assume that $P_i \in \Q_i$ for exactly two indices $i$. Then as before,
we may conjugate so that $P = Q_1Q_2R_3$ or $Q_1Q_2'R_3$ is on the table.
Appealing to Lemma~\ref{l:normsl2q} again to see that $\Out_{L_3}(R_3) = 1$, we
have in the former case that $\Out_{\K}(P) \cong S_3 \wr C_2$ with the class of
$\tau$ wreathing, while in the latter case we have a similar situation with the
class of $\tau'$ wreathing. This concludes the case (b)(ii). 

Consider now a subgroup $P$ in (b)(iii), and recall that $Z(L_0) = U$.  Thus $P
= P_0\gen{s}$ with $s \in P-C_P(U)$ normalizing $P_0$.  Set $N = N_K(P)$ and $M
= N_K(P_0)$. Denote quotients modulo $P_0$ with bars. We set $M^+ =
\bar{M}/O_{2'}(\bar{M})$ and write quotients modulo $O_{2'}(\bar{M})$ with pluses.
Thus, for any subgroup $Y \leq M$, we write $Y^+$ for the image of $Y$ modulo
the preimage of $O_{3}(\bar{M})$ in $M$.

Since $L_0 \norm K$, we see that $P_0 = P \cap L_0 \norm N$ so that $N \leq M$.
In particular, $\bar{N}$ is defined. Also, since $\bar{s}$ is of order $2$, $N$
is the preimage in $M$ of $C_{\bar{M}}(\bar{s})$. As $P$ is radical, we must
have
\begin{eqnarray}
\label{e:Prad}
\gen{\bar{s}} = O_2(C_{\bar{M}}(\bar{s})).
\end{eqnarray}

We consider separately the cases where $P_0 \notin \K^{cr}$ and where $P_0 \in
\K^{cr}$. Assume first that $P_0 \notin \K^{cr}$, the easier case. Upon
comparing the conditions in (b)(ii) and (b)(iii), we have by our assumption
that $P_3 \in \Q_3$ and $P_i = R_i$ for $i = 1, 2$. Thus, $\bar{M} =
\gen{\bar{\tau}} \times \bar{N_{L_3}(P_3)} \cong C_2 \times S_3$, and so $P =
P_0\gen{\tau}$ by \eqref{e:Prad}. Hence, $\Out_\F(P) \cong S_3$, and $P$
appears in the last row of the table.

Assume next that $P_0 \in \K^{cr}$, so that $P_0$ is conjugate to a subgroup
considered in (b)(ii), rows 3-6 of the table. First assume that $P_0$
itself appears in rows 3-6. Our description of the normalizer in $K$ of $P_0$
in a previous paragraph together with order considerations imply that
$N_{R_0}(P_0)\gen{\tau}/P_0$ is a Sylow $2$-subgroup of $\Out_{\K}(P_0)$ if
$P_0$ appears in rows 3-5, and that $N_{R_{0}}(P_0)\gen{\tau'}/P_0$ is a Sylow
$2$-subgroup of $\Out_\K(P_0)$ if $P_0$ appears in row 6.  This shows that
$\Aut_{S}(P_0)$ is a Sylow $2$-subgroup of $\Aut_{\K}(P_0)$, that is, $P_{0}$
is fully $\K$-automized \cite[I.2.2]{AschbacherKessarOliver2011}.  Since $P_0$
is $\K$-centric, it is fully $\K$-centralized
(\cite[I.3.1]{AschbacherKessarOliver2011}). Hence, $P_0$ is fully
$\K$-normalized by \cite[I.2.6(c)]{AschbacherKessarOliver2011}.

Thus, by Lemma~\ref{l:extfn} (and since $P_0 \norm P$),  we may in any
case replace $P$ by a $\K$-conjugate and assume that $P_0$ is in rows 3-6 of
the table.
\newline \newline
\textit{Case 1.} Assume $P_0 = Q_1Q_2Q_3$ and recall Lemma~\ref{l:quatprop}(e).

Here, $\bar{M} = \bar{N_{L_0}(P_0)}\bar{X} \cong S_3 \wr S_3$,
$\bar{N_{R_0}(P_0)}^+ = O_{2}(M^+)$, and $\bar{N_{S}(P_0)} =
\bar{N_{R_0}(P_0)}\gen{\bar{\tau}}$. 
By assumption, $s$ is not in $C_P(U)$, so it is not in $R_0$. Hence, $\bar{s}$
is not in $\bar{N_{R_0}(P_0)}$. Thus, 
\begin{eqnarray}
\label{e:splustau}
\bar{s} \in \bar{N_{R_0}(P_0)}\bar{\tau}. 
\end{eqnarray}

Write $\bar{s} = \bar{[t_1,t_2,t_3]}\bar{\tau}$, where each $t_i \in R_i$, and
where we take $t_i = 1$ if $t_i \in P_i$ and $t_i = a^{2^{l-1}}$ if $t_i \notin
P_i$. As $\tau$ acts by swapping $R_1$ and $R_2$ and centralizing $R_3$, it
follows from \eqref{e:splustau} that $C_{\bar{N_{R_0}(P_0)}}(\bar{s})$ is of
order 4 generated by $\bar{[a^{2^{l-1}}, a^{2^{l-1}}, 1]}$ and
$\bar{[1,1,a^{2^{l-1}}]}$.  

Note that $t_1 = t_2$ since $\bar{s}$ is of order $2$.  We claim that
\eqref{e:Prad} and \eqref{e:splustau} imply $t_3 = 1$. Assume on the contrary
that $t_3 = a^{2^{l-1}}$. Then $O_{2'}(C_{\bar{M}}(\bar{s})) =
C_{O_{2'}(\bar{M})}(\bar{s}) \leq \bar{N_{L_1}(Q_1)}\bar{N_{L_2}(Q_2)}$.  By
\eqref{e:splustau}, $C_{O_{2',2,2'}(\bar{M})}(\bar{s}) =
C_{O_{2',2}(\bar{M})}(\bar{s})$. So since $\gen{\bar{[1,1,a^{2^{l-1}}]}}$ is a
normal $2$-subgroup of $\bar{N_S(P_0)}$, it follows that
$\gen{\bar{[1,1,a^{2^{l-1}}]}}$ is a normal $2$-subgroup of
$C_{\bar{M}}(\bar{s})$. By \eqref{e:Prad}, $\bar{s} = \bar{[1,1,a^{2^{l-1}}]}$.
This contradicts \eqref{e:splustau}. 

We conclude that $\bar{s} = \bar{\tau}$ or $\bar{s} = \bar{[a^{2^{l-1}},
a^{2^{l-1}}, 1]}\bar{\tau}$, 
and hence
\[
C_{\bar{M}}(\bar{s})  = \gen{\bar{s}} \times
C_{\bar{N_{L_1}(Q_1)N_{L_2}(Q_2)}}(\bar{s}) \times
C_{\bar{N_{L_3}(Q_3)}}(\bar{s}) \cong  C_2 \times S_3 \times S_3.
\]
However, since the two possibilities for $\bar{s}$ are conjugate under
$\bar{N_{R_1}(Q_1)}$, we may take $\bar{s} = \bar{\tau}$, as desired.
\newline\newline 

\textit{Case 2.} Assume $P_0 = Q_1Q_2Q_3'$. In this case $\bar{M} \cong S_3 \wr
\langle \bar{\tau} \rangle \times S_3$.  Replacing all occurrences of $Q_3$ by
$Q_3'$, we argue verbatim as in Case 1, except that in the present situation we
have $O_{2',2,2'}(\bar{M}) = O_{2',2}(\bar{M}) = \bar{M}$, obviating the need
to observe that $C_{O_{2',2,2'}(\bar{M})}(\bar{s}) =
C_{O_{2',2}(\bar{M})}(\bar{s})$.  Again we may take $\bar{s}=\bar{\tau}$. 
\mbox{}\newline\newline 
\textit{Case 3.} Assume $P_0 = Q_1Q_2R_3$. We have $\bar{M} = \bar{N_{L_0}(P_0)}\bar{X} \cong S_3 \wr \langle \bar{\tau} \rangle$,
$\bar{N_{R_0}(P_0)}^+ = O_{2}(M^+)$, and $\bar{N_{S}(P_0)} =
\bar{N_{R_0}(P_0)}\gen{\bar{\tau}}$. 
By assumption, $s$ is not in $C_P(U)$, so it is not in $R_0$. Hence, $\bar{s}$
is not in $\bar{N_{R_0}(P_0)}$. Thus, $\bar{s} \in \bar{N_{R_0}(P_0)}\bar{\tau}.$ 
As in the previous cases,
\eqref{e:Prad} forces $\bar{s} = \bar{\tau}$ or $\bar{[a^{2^{l-1}},
a^{2^{l-1}},1]}\bar{\tau},$ so that $$C_{\bar{M}}(\bar{s}) = \langle
\bar{s}\rangle \times C_{\bar{N_{L_1}(Q_1)N_{L_2}(Q_2)}}(\bar{s}) = C_2 \times
S_3. $$ Again, these possibilities for $\bar{s}$ are conjugate under
$\bar{N_{R_1}(Q_1)}$ and we see that we may take $\bar{s} = \bar{\tau}$, as
needed. \newline \newline \textit{Case 4.} Assume $P_0 = Q_1Q_2'R_3$. This
time, replace $\tau$ by $\tau'$, and repeat the argument from the previous
case.
\end{proof}

\begin{Rem}\label{r:102}
The minor omission in the proof of \cite[Lemma~10.2]{AschbacherChermak2010}
alluded to in the proof of Proposition~\ref{p:10.2} occurs in the middle of
page 953 with the claim``$|O_3(\bar{N})| = 9$''. It is possible that
$|O_3(\bar{N})| = 3$ under the hypotheses there. More precisely, consider
the subgroup $P = P_{0}\gen{s}$, where $P_0 = Q_1Q_2Q_3$ and $s =
[a^{2^{l-1}},1,1]\tau$.
Then $\bar{s}$ centralizes $O_3(\bar{N_{L_3}(Q_3)})$, $\bar{s}$ has order $4$,
and $\bar{s}$ squares to $\bar{[a^{2^{l-1}}, a^{2^{l-1}}, 1]}$. As $\bar{s}$
centralizes no nontrivial element in $O_{3}(\bar{N_{L_1L_2}(P_0)}) \cong C_3
\times C_3$, we have $O_3(C_{\bar{M}}(\bar{s})) = O_{3}(\bar{N_{L_3}(Q_3)})$ is
of order $3$.  But in this case, $O_2(N_{\bar{M}}(\gen{\bar{s}})) \cong D_8$
while $\gen{\bar{s}}$ is cyclic of index $2$ in this subgroup, and so
$|O_2(\Out_\K(P))| = 2$ is generated by the image of $\bar{[a^{2^{l-1}},1,1]}$.
Thus, $P_0\gen{s}$ satisfies Proposition~\ref{p:10.2}(b)(iii)(2-3), but is not
$\K$-radical.

This example also indicates how to patch the proof of
\cite[Lemma~10.2]{AschbacherChermak2010}. Paragraph 3 of page 953 gives an
argument for the statement that if (b)(iii)(2-3) holds (in our numbering), then
$P$ is centric radical.  Follow it until line $-2$ of that paragraph. In
particular, one is reduced to the case in which $P_0 = Q_1Q_2Q_3$, and $\bar{M}
\cong S_3 \wr S_3$. The Sylow $2$-subgroup $\bar{N_S(P_0)}$ of $\bar{M}$ has
the structure $D_8 \times C_2$, and it acts on $O_3(\bar{M})$ decomposably with
nontrivial summands $O_{3}(\bar{N_{L_1L_2}(P_0)})$ and
$O_{3}(\bar{N_{L_3}(P_0)})$.  Fix any element $s \in N_S(P_0)-C_S(U)$ such that
$\bar{s}$ is of order $4$ in $\bar{M}$.  Indeed, there are exactly two
possibilities for $\gen{\bar{s}}$ and hence for $P = P_0\gen{s}$, namely
$\gen{\bar{[a^{2^{l-1}},1,1]}\bar{\tau}}$
and $\gen{\bar{[a^{2^{l-1}},1,a^{2^{l-1}}]}\bar{\tau}}$. 
The latter determines a subgroup $P = P_0\gen{s}$ that is not $\K$-radical
because it does not satisfy Proposition~\ref{p:10.2}(b)(iii)(3), while the
first determines a subgroup $P = P_0\gen{s}$ that is also not $\K$-radical
(from the previous paragraph).  Hence, we must have $\bar{s}$ is of order $2$,
i.e. it is necessary that (b)(iii)(1) also holds. 
\end{Rem}

We next determine the set  $\H^{cr}$ up to $\H$-conjugacy in the case $l > 0$.

\begin{Prop}\label{p:h}
Suppose that $l > 0$. There are eighteen $\H$-conjugacy classes of elements of
$\H^{cr}$. Representatives for these classes together with their outer
automorphism groups in $\H$ are listed in Table \ref{t:3}.
\end{Prop}

\begin{table}[h]
\centering
\renewcommand{\arraystretch}{1.5}
\caption{$\H$-conjugacy classes of $\H$-centric radical subgroups, $l > 0$}
\label{t:3}
\begin{tabular}{|c|c|c|}
\hline
$P$                             & $|P|$                          & $\Out_\H(P)$                                            \\ \hline
$S$                             & $2^{10+3l}$                                  & $1$                                            \\ \hline
$Q_1Q_2Q_3$                     & $2^8$ & $S_3 \times S_3 \wr C_2$                             \\ \hline
$Q_1Q_2Q_3'$                    & $2^8$    & $S_3 \wr C_2 \times S_3$                \\ \hline
$Q_1'Q_2Q_3$                    & $2^8$    & $S_3^3$                \\ \hline
$Q_1Q_2R_3$                     & $2^{8+l}$                  & $S_3 \wr C_2$                     \\ \hline
$Q_1R_2Q_3$                     & $2^{8+l}$                  & $S_3 \times S_3$                    \\ \hline
$Q_1Q_2'R_3$                    & $2^{8+l}$                   & $S_3 \wr C_2$                         \\ \hline
$Q_1R_2Q_3'$                     & $2^{8+l}$                  & $S_3 \times S_3$                       \\ \hline

$Q_1Q_2Q_3\langle\tau\rangle$   & $2^9$                              & $S_3 \times S_3$                                    \\ \hline
$Q_1Q_2Q_3'\langle\tau\rangle$  & $2^9$                             & $S_3 \times S_3$                                   \\ \hline
$Q_1Q_2R_3\langle\tau\rangle$   & $2^{9+l}$                             & $S_3$                                               \\ \hline
$Q_1Q_2'R_3\langle\tau'\rangle$ & $2^{9+l}$                           & $S_3$                                      \\ \hline
$R_1R_2Q_3\langle\tau\rangle$   & $2^{9+2l}$                            & $S_3$                                         \\ \hline
$Q_1R_2R_3$				  &  $2^{8+2l}$                                  & $S_3$                        \\ \hline
$R_{1^7}$                                                  &  $2^7$                                     & $S_7$                                                  \\ \hline
$R_{1^7}'$                                                &  $2^6$                                     & $S_6$                             \\ \hline
$R_{1^52}$						  &  $2^{7+l}$                                  & $S_5$                          \\ \hline
$C_S(E/Z)$					  &  $2^{9+3l}$                                 & $S_3$                    \\ \hline
\end{tabular}
\end{table} 

\begin{proof}
Let $P \in \H^{cr}$.  If $N_H(P) \nleq K$ then using
Proposition~\ref{p:ngpnleqk}(a)-(d), we obtain the groups in the last 4 rows of
Table \ref{t:3}. Hence, for the remainder of the proof, we may assume that 
\begin{eqnarray}
\label{e:NHPleqK}
N_H(P) \leq K. 
\end{eqnarray}
By \cite[Lemma~3.3(a)]{LeviOliver2002}, $P$ is $\F$-centric, so that $P$ is
also $\K$-centric.

Suppose first that $P$ is $\K$ radical, so that $P \in \K^{cr}$. In this case
we appeal to Proposition~\ref{p:k} to obtain the first thirteen entries in
Table~\ref{t:3}, as follows. A case-by-case check shows that for each
$\K$-conjugacy class $\C = Y^\K$ of a subgroup $Y$ listed in Table~\ref{t:4},
one of the following holds: either no member of $\C$ is $\H$-radical ($Y =
C_S(U)$), or $\C$ meets exactly one $\H$-radical conjugacy class ($Y = S,
Q_1Q_2Q_3, Q_1Q_2Q_3\gen{\tau}, Q_1Q_2Q_3'\gen{\tau}, Q_1Q_2R_3\gen{\tau},
Q_1Q_2'R_3\gen{\tau'}$, or $R_1R_2Q_3\gen{\tau}$), or $\C$ is the class of one
of the entries in rows $4$ through $6$ of Table~\ref{t:4} ($Y = Q_1Q_2Q_3',
Q_1Q_2R_3$, or $Q_1Q_2'R_3$). In this last case, $\C$ meets one of two
$\H$-classes of $\H$-radical subgroups, and corresponding representatives of
these $\H$-classes appear in rows $3$ through $8$ of Table~\ref{t:3}. In
each of the three cases, $\Out_\H(P)$ is computed using $\eqref{e:NHPleqK}$ and
the descriptions $K = C_H(U)X$ and $H \cap K = C_K(Z) = N_H(U) =
C_H(U)\gen{\tau} = C_H(U)\gen{\tau'}$.

We illustrate the argument of the previous paragraph with four examples. First,
consider $Y = C_S(U)$. As $C_H(U)$ is normal in $K$ with Sylow $C_S(U)$, we
have $C_S(U)$ is strongly $\K$-closed. In particular, $C_S(U)^\K = \{C_S(U)\}$.
Appealing to Lemma~\ref{l:csu}, we see that $\Out_\H(C_S(U)) \cong C_2$, so
$C_S(U)$ is not $\H$-radical. 

Next, consider $Y = Q_1Q_2Q_3$. Since $X$ normalizes $Y$, we have that $Y^\K =
Y^\H$, and $N_H(Y)$ is of index $3$ in $N_K(Y)$. Since $N_H(U) =
C_H(U)\gen{\tau} = C_H(U)\gen{\tau'}$, it follows that $\Out_\H(Y) \cong S_3
\wr C_2 \times S_3$ is index $3$ in $\Out_\K(Y) \cong S_3 \wr S_3$. 

Third, consider $Y = Q_1Q_2'R_3\gen{\tau'}$. This time, no element of
$K-C_K(Z)$ normalizes $Y$, so $N_H(Y) = N_K(Y)$ by \eqref{e:NHPleqK}. However,
we claim that $Y^\K = Y^\H$.  For the proof, assume on the contrary that there
is $k \in K$ with $Y^k \leq S$ and $Y^k$ not $H$-conjugate to $Y$. Then $k
\notin K-N_H(U)$, and $k$ normalizes $R_0 = L_0 \cap S$, so $\gen{k}$ permutes
the set $\{R_1,R_2,R_3\}$ transitively. It follows that the element $\tau'^k
\in S$ interchanges $R_3$ and some other $R_i$ by conjugation. This is a
contradiction, as $R_3$ is a fixed point in the action of $S$ on
$\{R_1,R_2,R_3\}$ (see Section \ref{s:conssol}).  Hence, $Y^\K$ meets exactly
one $\H$-conjugacy class as claimed, and $\Out_\H(Y) = \Out_\K(Y)$, so $Y$ is
$\H$-radical.  

Finally, consider $Y = Q_1Q_2R_3$. Then $N_X(Y) = \gen{\tau}$ is of order $2$.
As $L_3$ is $C_H(U)$-invariant, $R_3$ is strongly closed in $C_S(U)$ with
respect to $C_H(U)$, so no element of $C_H(U)O_3(X)-C_H(U)$ normalizes $Y$.  It
follows that $N_H(Y) = N_K(Y)$ from \eqref{e:NHPleqK}. This also shows that if
we fix a nontrivial element $x \in O_3(X)$, then representatives for the
$\H$-conjugacy classes in $Y^\K$ may be taken as a subset of $\{Y, Y^x,
Y^{x^2}\} = \{Q_1Q_2R_3, Q_1R_2Q_3, R_1Q_2Q_3\}$.  As $\tau \in S$ interchanges
$R_1Q_2Q_3$ and $Q_1R_2Q_3$, it follows that $Y^\K$ meets two $\H$-conjugacy
classes (at most), with representatives $Y$ and $Q_1R_2Q_3$. From $N_H(Y) =
N_K(Y)$, we have $\Out_\H(Y) = \Out_\K(Y) \cong S_3 \wr C_2$, while
$\Out_\H(Q_1R_2Q_3) \cong S_3 \times S_3$ is induced by
$N_{L_1}(Q_1)N_{L_3}(Q_3)$. So indeed $Q_1R_2Q_3$ is $\H$-radical and not
$\H$-conjugate to $Y$.

It remains to consider the case in which $P$ is not $\K$-radical.  We claim
here that $P$ is $\H$-conjugate to $Q_1R_2R_3$, the last remaining entry of
Table~\ref{t:3}.  Observe first that $P \leq C_S(U) = R_0\gen{\mathbf{d}}$.
Indeed, otherwise $Z(P) \cap U = Z$ would be $N_K(P)$-invariant, and so as
$N_H(P) = N_K(P)$ by \eqref{e:NHPleqK}, this would yield that $\Out_\H(P) =
\Out_\K(P)$ has no nontrivial normal $2$-subgroups, contradicting the
assumption that $P$ is not $\K$-radical. Hence, $P \leq C_S(U) =
R_0\gen{\mathbf{d}}$ as claimed, and so $U \leq Z(P)$ since $P$ is centric. 

Set $P_0 = P \cap L_0$, and for each $i = 1,2,3$, let $P_i$ be the
projection of $P_0$ in $R_i$ as before; see the remarks just before
Proposition~\ref{p:10.2}. A reading of the first three paragraphs of the proof
of \cite[Lemma~10.2]{AschbacherChermak2010} reveals that the given argument
applies to an $\H$-centric radical $P \leq S$ whose normalizer $N_H(P)$ is
contained in $K$, our current situation \eqref{e:NHPleqK}. We conclude that
$P_i = P \cap R_i$ and that $P_i \in \Q_i$ or $P_i = R_i$ for each $i = 1,2,3$.
In particular, $P_0 = P_1P_2P_3$. 

We next claim that $P = P_0$. Suppose on the contrary that $P_0 < P = C_P(U)$,
and choose $d \in P-C_P(U)$. Then $d \in R_0\gen{\mathbf{d}}-R_0$, and since
$\mathbf{d}$ interchanges the $R_0$-conjugacy classes of subgroups in $\Q_i$
for each $i$ (c.f. Notation~\ref{N:Knotation2}(e,f)), $d$ has the same
property.  On the other hand, as $d$ normalizes $R_i$ and $P$, it normalizes
$P_i = P \cap R_i$. We conclude that $P_i = R_i$ for each $i$. But then $P =
R_0\gen{\mathbf{d}} = C_S(U)$ and $\Out_\H(P)$ is of order $2$. Thus, $P$ is
not $\H$-radical, contrary to the original choice of $P$.

Finally, conjugating in $L_0\gen{\mathbf{d}} = C_H(U) \leq H$ if necessary, we
have $P = Q_1R_2R_3$ or $R_1R_2Q_3$. But in the latter case, $\Out_\H(P) \cong
C_2 \times S_3$ is induced by $\gen{\tau} \times N_{L_3}(Q_3)$, so again, $P$
is not $\H$-radical, a contradiction. Thus, up to $\H$-conjugacy, $P =
Q_1R_2R_3$ and $\Out_\H(P) \cong S_3$ is induced by $N_{L_1}(Q_1)$, and this is
the only remaining entry in Table~\ref{t:3}.
\end{proof}

Finally, we are able to describe the set of $\F$-centric radical subgroups, up
to $\F$-conjugacy:
\begin{Thm}
Let $\F=\Sol(5^{2^l})$ with $l > 0$. Representatives for the $\F$-conjugacy classes of
$\F$-centric radical subgroups, together with their orders and automorphism
groups, are listed in Table \ref{t:2}, where `$-$' indicates that the subgroup
is not centric radical in that fusion system.
\end{Thm}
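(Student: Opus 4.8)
The plan is to fuse together the separate classifications of $\K^{cr}$ and $\H^{cr}$ obtained in Propositions~\ref{p:k} and \ref{p:h}, using Proposition~\ref{p:fcr} as the scaffold. That result says that, up to $\F$-conjugacy, a subgroup $P \le S$ is $\F$-centric radical exactly when it is one of the distinguished subgroups $A$ or $C_S(E)$ (whose outer automorphism groups $\GL_4(2)$ and $\GL_3(2)$ are recorded in Subsection~\ref{ss:standardseq}), or else $N_G(P) \le K$ and $P \in \K^{cr}$, or $N_G(P) \le H$ and $P \in \H^{cr}$. For $P$ fully normalized and $\F$-centric we have $\Out_\F(P) = N_G(P)/P C_G(P)$, so whenever $N_G(P) \le K$ this equals $\Out_\K(P)$ and whenever $N_G(P) \le H$ it equals $\Out_\H(P)$. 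Since $G$ is an amalgam, $N_G(P) \le K$ holds precisely when $N_H(P) \le K$ and $N_G(P) \le H$ precisely when $N_K(P) \le H$; thus the outer automorphism data can be read straight off Tables~\ref{t:4} and \ref{t:3}, and it remains only to decide, for each candidate, which factor its normalizer lies in and to remove $\F$-conjugate repetitions.

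First I would run through the eleven classes of Table~\ref{t:4}. Ten of them reappear in Table~\ref{t:3}, where the proof of Proposition~\ref{p:h} has already established $N_H(P) \le K$; the remaining class $C_S(U)$ satisfies $N_H(C_S(U)) \le K$ by a direct check. Hence each of the eleven has $N_G(P) \le K$ and enters the list with $\Out_\F(P) = \Out_\K(P)$. Next I would take the four classes of $\H^{cr}$ with $N_H(P) \nleq K$, namely $R$, $R^*$, $R^{**}$ and $O_2(N_\H(E))$ from Proposition~\ref{p:ngpnleqk}; these have $N_G(P) \le H$, so they enter with $\Out_\F(P) = \Out_\H(P)$, equal respectively to $S_7$, $S_6$, $S_5$ and $S_3$. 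Together with $A$ and $C_S(E)$ this produces the seventeen rows of Table~\ref{t:2}.

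The delicate point, and the one I expect to be the main obstacle, is to show that the remaining members of $\H^{cr}$ contribute nothing new. The reason distinct $\H$-classes can become $\F$-conjugate is that $\K$ contains the full coordinate-permuting subgroup $X \cong S_3$, while inside $\H$ only the transposition $\tau$ survives. Thus the extra Table~\ref{t:3} classes $Q_1'Q_2Q_3$, $Q_1R_2Q_3$ and $Q_1R_2Q_3'$ are carried by a suitable element of $X \le \K \le \F$ onto $Q_1Q_2Q_3'$, $Q_1Q_2R_3$ and $Q_1Q_2'R_3$, and so are already present in the list. The subgroup $Q_1R_2R_3$ is the genuinely borderline case: it is $\H$-radical but not $\K$-radical, so I would show that it falls out of $\F^{cr}$ altogether by verifying $N_G(Q_1R_2R_3) \le K$, for which it suffices that the stabilizer in $X$ of the shape $(Q,R,R)$ is generated by the transposition $(2,3)$, which does not lie in $B$, so that $N_K(Q_1R_2R_3) \nleq H$. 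I would also confirm that the involution $\mathbf{c}$, which exchanges $Q_i$ with $Q_i'$, introduces no fusion beyond that already recorded in $\K$.

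Finally I would check that no further $\F$-conjugacy identifies two of the seventeen rows. Pairs with non-isomorphic outer automorphism groups or different orders are automatically distinct, so it remains to separate the pairs sharing both. For those I would argue as in Proposition~\ref{p:sol3}: each such pair has center $Z$ or $U$, so any $\F$-isomorphism between its members restricts to an isomorphism of centers and is therefore realized inside $\H = C_\F(Z)$ or $\K = N_\F(U)$, where Propositions~\ref{p:k} and \ref{p:h} already show the two to be non-conjugate. The only same-order pair not settled this way is $\{C_S(U), O_2(N_\H(E))\}$, both of order $2^{9+3l}$ with $\Out_\F \cong S_3$, which I would separate by the direct observation that their centers differ.
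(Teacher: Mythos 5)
Your proposal is correct and takes essentially the same approach as the paper: the paper's entire proof of this theorem is the single sentence that it ``follows upon combining Propositions~\ref{p:fcr}, \ref{p:k} and \ref{p:h}'', which is exactly the scaffold you use. The extra details you supply --- discarding $Q_1R_2R_3$, fusing the redundant $\H$-classes such as $Q_1'Q_2Q_3$ and $Q_1R_2Q_3$ into the $\K$-classes via $X \leq K$, and separating same-order rows by noting their centers force any conjugating morphism into $\H = C_\F(Z)$ or $\K = N_\F(U)$ --- simply make explicit what the paper leaves implicit, mirroring the argument it does spell out for $l=0$ in Proposition~\ref{p:sol3}.
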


\begin{table}[h]
\centering
\renewcommand{\arraystretch}{1.5}
\caption{$\F$-conjugacy classes of $\F$-centric radical subgroups, $l > 0$ }
\label{t:2}
\begin{tabular}{|c|c|c|c|c|}
\hline
$P$                             & $|P|$    & $\Out_\K(P)$                          & $\Out_\H(P)$                         & $\Out_\F(P)$                       \\ \hline
$S$                             & $2^{10+3l}$ & $1$                                  & $1$                                  & $1$                  \\ \hline
$C_S(U)$                        & $2^{9+3l}$ & $S_3$                            & $-$                                  & $S_3$                       \\ \hline
$Q_1Q_2Q_3$                     & $2^8$    & $S_3 \wr S_3$                & $S_3 \times S_3 \wr C_2$ & $S_3 \wr S_3$             \\ \hline
$Q_1'Q_2Q_3$                    & $2^8$    & $S_3 \times S_3 \wr C_2$ & $S_3^3$                          & $S_3 \times S_3 \wr C_2$ \\ \hline
$Q_1Q_2R_3$                     & $2^{8+l}$    & $S_3 \wr C_2$                & $S_3 \wr C_2$                & $S_3 \wr C_2$           \\ \hline
$Q_1'Q_2R_3$                    & $2^{8+l}$    & $S_3 \wr C_2$                & $S_3 \wr C_2$                & $S_3 \wr C_2$               \\ \hline
$Q_1Q_2Q_3\langle\tau\rangle$   & $2^9$    & $S_3 \times S_3$                          & $S_3 \times S_3$                          & $S_3 \times S_3$             \\ \hline
$Q_1Q_2Q_3'\langle\tau\rangle$  & $2^9$    & $S_3 \times S_3$                          & $S_3 \times S_3$                          & $S_3 \times S_3$              \\ \hline
$Q_1Q_2R_3\langle\tau\rangle$   & $2^{9+l}$ & $S_3$                            & $S_3$                            & $S_3$                     \\ \hline
$Q_1Q_2'R_3\langle\tau'\rangle$ & $2^{9+l}$ & $S_3$                            & $S_3$                            & $S_3$                        \\ \hline
$R_1R_2Q_3\langle\tau\rangle$   & $2^{9+2l}$ & $S_3$                            & $S_3$                            & $S_3$                    \\ \hline
$R_{1^7}$                           &  $2^7$        & $-$                               &$S_7$                            & $S_7$                        \\ \hline
$R_{1^7}'$                         &  $2^6$        & $-$                               & $S_6$                            & $S_6$     \\ \hline
$R_{1^52}$					  &  $2^{7+l}$    & $-$                               & $S_5$                            & $S_5$\\ \hline
$C_S(E/Z)$					  &  $2^{9+3l}$    & $-$                               & $S_3$                            & $S_3$ \\ \hline
$C_S(E)$            &   $2^{7+3l}$       & $-$                                  & $-$                                  &    $\GL_3(2)$         \\ \hline
$A$                           &  $2^4$         & $-$                                  & $-$                                  &     $\GL_4(2)$                       \\ \hline
\end{tabular}
\end{table}

\begin{proof}
This follows upon combining Propositions \ref{p:fcr}, \ref{p:k} and \ref{p:h}.
Note that $Q_1R_2R_3$ appears in Table~\ref{t:3}, but it does not appear in
Table~\ref{t:2} because it does not satisfy the hypotheses of
Proposition~\ref{p:fcr}(c)(ii): there is an involution in $K \leq G$ of the
form $\tau^x$ for a nonidentity $x \in O_3(X)$ which normalizes $Q_1R_2R_3$ but
is not contained in $H$. Indeed, $O_{2}(\Out_\F(Q_1R_2R_3))$ is of order $2$
and induced by conjugation by this element, so $Q_1R_2R_3$ is not $\F$-radical.
\end{proof}

\section{Number of projective simple modules}\label{s:blocks}

In this section we calculate the number of projective simple modules for the
outer automorphism groups in the various tables of the previous section. Let
$G$ be a finite group, and let $\Irr(G)$ be the set of ordinary irreducible
characters of $G$, i.e. those over a splitting field of characteristic $0$.  A
character $\chi \in \Irr(G)$ is said to be of \emph{$p$-defect} $d$ if
$|G|_p/\chi(1)_p = p^d$, where $n_p$ denotes the $p$-part of the integer $n$. 

Write $z(kG)$ for the number of projective simple $kG$-modules. Any projective
simple module is the unique indecomposable module in the block of $kG$ in which
it lies. Blocks of $kG$ of defect $0$ are exactly those which contain such a
module.  Equivalently, blocks of defect $0$ are exactly those which contain a
unique ordinary irreducible character of defect $0$. Thus, one can count
projective simple $kG$-modules by looking at the list of irreducible character
degrees for $G$. 

\begin{Thm}\label{t:pdef}
Let $G$ be a finite group and $p$ be a prime. Then $z(kG)$ is the number of
ordinary irreducible characters of defect $0$. 
\end{Thm}
\begin{proof}
See \cite[Theorem 3.18]{Navarro1998}.
\end{proof}

Along with Theorem~\ref{t:pdef}, the ``method of little groups'' will be used
in order to compute the character degrees of various solvable groups.

\begin{Thm}[Method of little groups]
\label{t:litgrp}
Let $G$ be a finite group with normal subgroup $A$, and let $\Theta =
[\Irr(A)/G]$ denote a set of representatives for the $G$-orbits on $\Irr(A)$.
Assume that each irreducible character $\theta$ of $A$ extends to an
irreducible character $\hat{\theta}$ of its inertia subgroup $I_G(\theta)$.
Then there is a bijection 
\begin{eqnarray}
\label{e:litgrp}
\{\,\,(\theta, \beta) \,\, \mid \,\, \theta \in \Theta, \,\,\beta \in \Irr(I_G(\theta)/A)\,\,\} \longrightarrow \Irr(G)
\end{eqnarray}
given by sending $(\theta, \beta)$ to the induced character
$(\hat{\theta}\beta)\!\uparrow^G_{I_G(\theta)}$, where $\hat{\theta}$ is any
extension of $\theta$, and where $\beta$ is regarded also as an irreducible character of
$I_G(\theta)$ with $A$ in its kernel.
\end{Thm}
\begin{proof}
This is a standard result which is a consequence of
\cite[11.5]{CurtisReiner1990}. Fix as in the statement a set $\Theta$ of
representatives of the $G$-orbits on $\Irr(A)$, and for each $\theta \in
\Theta$, an extension $\hat{\theta} \in \Irr(I_G(\theta))$.  For each pair
$(\theta, \beta)$ as in \eqref{e:litgrp}, the character
$(\hat{\theta}\beta)\!\uparrow^G_{I_G(\theta)}$ is irreducible by
\cite[11.5(ii)]{CurtisReiner1990}. So the map in \eqref{e:litgrp} is
well-defined. 

Assume $(\hat{\theta}_1\beta_1)\!\!\uparrow^G_{I_G(\theta_1)} =
(\hat{\theta}_2\beta_2)\!\!\uparrow_{I_G(\theta_2)}^G$. The restriction of this
character to $I_{G}(\theta_i)$ has $\hat{\theta}_i\beta_i$ as a constituent by
Frobenius reciprocity, and so the restriction to $A$ has $\theta_i$ as a
constituent since $A \leq \ker(\beta_i)$.  It follows that $\theta_1$ and
$\theta_2$ are $G$-conjugate by Clifford's Theorem. Hence, $\theta_1 =
\theta_2$ since they were taken in $\Theta$. By
\cite[11.5(iii)]{CurtisReiner1990}, $\beta_1 = \beta_2$. 

Let $\chi \in \Irr(G)$, and let $\theta$ be any irreducible constituent of the
restriction of $\chi$ to $A$, with chosen lift $\hat{\theta}$ to $I_G(\theta)$.
Since $G$ is transitive on such constituents by Clifford's Theorem, we may take
$\theta \in \Theta$. By Frobenius reciprocity, $\chi$ is a constituent of
$\theta\!\uparrow_{A}^{G}$, and the latter decomposes as
\[
\theta\!\uparrow_A^G \quad = \sum_{\beta \in
\Irr(I_G(\theta)/A)}\beta(1)\,\,(\hat{\theta}\beta)\!\uparrow_{I_G(\theta)}^G
\]
by \cite[11.5(iii)]{CurtisReiner1990}. Hence, $\chi =
(\hat{\theta}\beta)\!\uparrow_{I_G(\theta)}^G$ for some $\beta$. 
\end{proof}

\begin{Prop}\label{p:def0}
Each of the groups listed in Table \ref{t:def0} has the stated number of blocks
of defect zero.
\end{Prop}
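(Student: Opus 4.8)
The plan is to read off $z(kG)$ as the number of ordinary irreducible characters of $G$ of $2$-defect zero, i.e.\ those $\chi$ with $\chi(1)$ divisible by $|G|_2$: since $k$ has characteristic $2$, the defect zero blocks are exactly the matrix blocks, each of which carries a single such character, so by the discussion preceding Theorem~\ref{T:robinson} the quantity $z(kG)$ counts these characters. I would then sort the thirteen groups according to whether they possess a normal $2$-complement, since that dichotomy decides which of Corollary~\ref{c:def0-2comp} and the direct character-theoretic arguments is available.

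Six of the groups---$S_3$, $S_3\times S_3$, $S_3\times S_3\times S_3$, $S_3\wr C_2$, $(C_3\times C_3)\overset{-1}{\rtimes}C_2$, and $(C_3)^3\rtimes(C_2\times S_3)$---are $2$-nilpotent, each having a normal odd-order Hall subgroup of $2$-power index (for the last of these it is $(C_3)^3\rtimes A_3$ of index $4$, a Sylow $3$-subgroup). For these I would apply Corollary~\ref{c:def0-2comp}, reducing the count to the number of conjugacy classes of odd centralizer order, which I compute from the conjugation action of $G$ on $O_{2'}(G)$. For instance, in $S_3\wr C_2$ every nonidentity element of the base $C_3\times C_3$ has centralizer of even order in $G$, so $z=0$; in $(C_3\times C_3)\overset{-1}{\rtimes}C_2$ the four inverse-pairs of nonidentity base elements each form a class of centralizer order $9$, giving $z=4$; and for $(C_3)^3\rtimes(C_2\times S_3)$ the equivalent tally of free Sylow-$2$ orbits on $\operatorname{Irr}(O_{2'}(G))$ (via Clifford theory) shows exactly one such orbit survives, so $z=1$.

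For the seven remaining groups there is no normal $2$-complement, and I would identify the defect zero characters directly. The groups $\GL_3(2)$ and $\GL_4(2)$ are of Lie type in their defining characteristic $2$, so by the classical fact that in defining characteristic the Steinberg character is the unique irreducible character of defect zero, each has $z=1$ (with Steinberg degrees $2^3$ and $2^6=|\GL_4(2)|_2$; note that treating $\GL_4(2)\cong A_8$ through symmetric-group cores would misleadingly suggest none, since $|S_8|_2\neq|A_8|_2$). For $S_5$, $S_6$, $S_7$ I would invoke the core description: $\chi^\lambda$ is of $2$-defect zero exactly when $\lambda$ is a $2$-core, the $2$-cores being the staircase partitions, which exist precisely when $n$ is triangular; since $6=1+2+3$ is triangular with unique core $(3,2,1)$ of degree $16$, while $5$ and $7$ are not, this yields $z=1,0,0$ respectively. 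Finally, $A_7$ is not of Lie type, so I would read its degrees $1,6,10,10,14,14,15,21,35$ off the character table and observe that none is divisible by $|A_7|_2=8$, whence $z=0$.

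The genuinely delicate case is $S_3\wr S_3$, whose largest normal odd subgroup is only $(C_3)^3$ of index $48$, so that Corollary~\ref{c:def0-2comp} does not apply. Here I would argue by Clifford theory over the base $(S_3)^3$: the only base character whose Clifford correspondent can induce to a character of $G$ of degree divisible by $|G|_2=16$ is $\chi\boxtimes\chi\boxtimes\chi$, where $\chi$ is the degree-$2$ character of $S_3$; this is stable under the top $S_3$, extends to $G$, and tensoring the extension by the degree-$2$ character of the top $S_3$ produces the unique character of degree $16$, so $z=1$. One could alternatively feed the odd-centralizer classes and the admissible $S$-$S$ double cosets into Theorem~\ref{T:robinson} and compute the rank of $NN^{T}$, but the Clifford-theoretic count is cleaner. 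The main obstacle is precisely this wreath product, together with the need to verify that the subgroups I name really are the full normal $2$-complements (and the associated orbit bookkeeping in $(C_3)^3\rtimes(C_2\times S_3)$), since an overlooked odd-order normal subgroup would undermine every application of Corollary~\ref{c:def0-2comp}.
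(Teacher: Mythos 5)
Your proof is correct, and for the groups without a normal $2$-complement it follows a genuinely different route from the paper. The two arguments coincide on the six $2$-nilpotent groups (both reduce to counting defect-zero classes via Corollary~\ref{c:def0-2comp}, and your identification of the normal $2$-complements, including $(C_3)^3\rtimes A_3$ of index $4$ in $(C_3)^3\rtimes(C_2\times S_3)$, is accurate), and they essentially coincide on $\GL_3(2)$ and $\GL_4(2)$, where the paper phrases the Steinberg argument via the BN-pair and Bruhat decomposition and also covers $S_6\cong \Sp_4(2)$ this way. The divergence is elsewhere: the paper handles $S_3\wr S_3$, $S_5$, $A_7$ and $S_7$ uniformly with Robinson's Theorem~\ref{T:robinson}, enumerating defect-zero classes and admissible $S$-$S$ double cosets and computing the rank of $NN^T$ modulo $2$ (thirty-three double cosets for $A_7$, ten for $S_7$), whereas you pass to the standard bijection between defect-zero blocks and ordinary irreducibles of $2$-defect zero and then invoke classical character theory: the Nakayama/$2$-core description for the symmetric groups (staircase partitions exist only for triangular $n$, giving $z(S_5)=z(S_7)=0$ and $z(S_6)=1$), the character table of $A_7$ (no degree divisible by $8$), and Clifford theory over the base $(S_3)^3$ for the wreath product (the unique degree-$16$ character lying over $\chi\boxtimes\chi\boxtimes\chi$). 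Your route buys explicit identification of the defect-zero characters and avoids the double-coset and matrix-rank bookkeeping entirely, at the cost of importing more machinery (Brauer's defect-zero correspondence, Nakayama, Steinberg, Fong--Clifford theory); the paper's route is more uniform and purely group-theoretic, needing nothing beyond conjugacy classes and double cosets. One small point of attribution: your count of free Sylow-$2$ orbits on $\operatorname{Irr}(O_{2'}(G))$ for $(C_3)^3\rtimes(C_2\times S_3)$ is really Fong theory for $2$-nilpotent groups rather than Corollary~\ref{c:def0-2comp} itself, which counts classes (here, the single class of the $36$ elements of order $9$ in the Sylow $3$-subgroup $C_3\wr C_3$); both counts give $z=1$, so this affects only the citation, not the correctness.
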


\begin{table}[h]
\centering
\renewcommand{\arraystretch}{1.5}
\caption{The number of projective simple modules}
\label{t:def0}
\begin{tabular}{|c|c||c|c||c|c|}

\hline
$G$                                          & $z(kG)$ & $G$                                          & $z(kG)$ & $G$                                          & $z(kG)$  \\ \hline

$S_3$                                        & $1$   &  $(C_3 \times C_3) \overset{-1}{\rtimes} C_2$ & $4$   & $S_6$                                        & $1$     \\ \hline
$S_3 \times S_3$                                        & $1$    & $(C_3)^3 \rtimes (C_2 \times S_3)$          & $1$    & $S_3 \wr S_3$                                & $1$     \\ \hline
$S_3 \times S_3 \times S_3$                                        & $1$    & $(C_3)^3 \rtimes (C_2 \times C_2)$          & $4$   & $S_5$                                        & $0$     \\ \hline
$S_3 \wr C_2$                                & $0$    & $\GL_3(2)$                                   & $1$   & $A_7$                                        & $0$     \\ \hline
$S_3 \wr C_2 \times S_3$                     & $0$    & $\GL_4(2)$                                   & $1$   & $S_7$                                        & $0$     \\ \hline
\end{tabular}
\end{table}

\begin{proof}
Let $G$ be one of the groups listed in Table \ref{t:def0}. It suffices by
Theorem \ref{t:pdef} to compute the number of characters having degree
divisible by the $2$-part of the group order. The character tables of
$G=\GL_3(2),S_5,S_6,A_7,S_7,\GL_4(2) \cong A_8$ can be found in the ATLAS
\cite{ATLAS}. For those $G$ which split as a direct product $G = G_1 \times
G_2$, we use the fact that the irreducible characters of $G$ are the pairwise
tensor products of the irreducible characters of $G_1$ and $G_2$.  

In all remaining cases, the character degrees are computed using
Theorem~\ref{t:litgrp} by taking $A$ to be a normal elementary abelian
$3$-group which is complemented in $G$. Each irreducible character of $A$
extends to its stabilizer in $G$ in this case by
\cite[11.8(ii)]{CurtisReiner1990}, so Theorem~\ref{t:litgrp} applies. For a
representative $\theta$ of an orbit of $G/A$ on $\Irr(A)$, we compute the
$2$-parts of the index in $G$ and of the irreducible character degrees
$\beta(1)$ of the inertia subgroup $I_G(\theta)$.  The pairs $(\theta, \beta)$
with $\theta(1)_2 \cdot [G:I_G(\theta)]_2 \cdot \beta(1)_2$ equal to the
$2$-part of the group order are recorded in Table~\ref{t:chardef0}. 

For example, suppose that $G=(C_3)^3 \rtimes (C_2 \times S_3)$ and set
$A=(C_3)^3 \leq G$.  For each $\theta \in \Irr(A)$ we have $\theta=
\theta_{i_1} \otimes \theta_{i_2} \otimes \theta_{i_3}$ for some $1 \le i_j \le
3$. An $S_3$ factor in  $G/A \cong S_3 \times C_2$ acts on $\Irr(A)$ by
permuting the $\theta_{i_j}$ while the $C_2$ factor fixes $\theta_1$ and
interchanges $\theta_2$ and $\theta_3$ in each coordinate. One computes that
there are six orbits on irreducible characters.  The pair $(\theta, \beta)$,
where $\theta = \theta_2 \otimes \theta_2 \otimes \theta_2$ and $\beta$ is the
degree $2$ irreducible character of $I_G(\theta)/A \cong S_3$ gives rise
to the only irreducible character of $G$ of $2$-defect $0$. The remaining cases
are summarized in Table~\ref{t:chardef0}, where a representative
$\theta_{i_1}\otimes \theta_{i_2} \otimes \theta_{i_3}$ is abbreviated to
$[i_1,i_2,i_3]$, for example. 
\begin{table}[h]
\centering
\renewcommand{\arraystretch}{1.5}
\caption{Characters $\chi \in \Irr(G)$ of defect $0$}
\label{t:chardef0}
\begin{tabular}{|c|c|c|c|c|c|}
\hline
$G$ & $A$ & $\theta$ & $I_G(\theta)/A$ & $\beta(1)$ & $\chi(1)$\\
\hline
\hline
$S_3$ & $O_3(G)$ & $[2]$ & $1$ & $1$ & $2$ \\
\hline
$S_3 \wr C_2$ & $O_3(G)$ & -- & -- & -- & -- \\
\hline
$S_3 \wr S_3$ & $O_3(G)$ & $[2,2,2]$ & $S_3$ & $2$ & $16$\\ 
\hline 
$(C_3)^2 \overset{-1}{\rtimes} C_2$ & $O_3(G)$ & $[1,1], [1,2], [2,1], [2,2]$ & $1,1,1,1$ & $1,1,1,1$ & $2,2,2,2$\\
\hline
$(C_3)^3 \overset{-1, (1,2)}{\rtimes} (C_2 \times C_2)$ & $O_3(G)$ & $[1,2,1], [1,2,2], [1,2,3], [2,3,2]$ & $1,1,1,1$ & $1,1,1,1$ & $4,4,4,4$ \\
\hline
$(C_3)^3 \overset{-1, \wr}{\rtimes} (C_2 \times S_3)$ & $(C_3)^3$ & $[2,2,2]$ & $S_3$ & $2$ & $4$\\
\hline
\end{tabular}
\end{table}
\end{proof}

Using Tables \ref{t:1}, \ref{t:3} and \ref{t:2} we can give a count of the
number of weights. 

\begin{Cor}\label{c:solqh}
For $\D \in \{\H,\F\}$ and all $l \ge 0$, the number of weights associated with
the K\"ulshammer-Puig pair $(\D,0)$ is 
\[
\mathbf{w}(\D,0)=12.
\]
\end{Cor}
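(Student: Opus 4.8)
The plan is to compute $w(\D,0)$ directly from its definition in \eqref{e:weights}, which with trivial cohomology class $\alpha = 0$ reduces to a sum of untwisted terms:
\[
w(\D,0) = \sum_{Q \in \D^{cr}/\D} z(k\Out_\D(Q)).
\]
First I would use the fact established just above that $z(kG)$ equals the number of $2$-blocks of defect zero of $G$, so that the summand $z(k\Out_\D(Q))$ can be read off from Table~\ref{t:def0} once I know the isomorphism type of each outer automorphism group $\Out_\D(Q)$. For $\D = \F$ with $l = 0$ the relevant data is the final column of Table~\ref{t:1}; for $\D = \F$ with $l > 0$ it is the final column of Table~\ref{t:2}; and for $\D = \H$ with $l > 0$ it is the final column of Table~\ref{t:3}. (The case $\D = \H$ with $l = 0$ is handled by the $\Out_\H$-column of Table~\ref{t:1}, ignoring the rows where a dash appears.) The proof is therefore essentially bookkeeping: match each outer automorphism group appearing in the tables to its row in Table~\ref{t:def0}, multiply by the number of conjugacy classes realizing that group, and sum.

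The key steps I would carry out are as follows. For $\F$ with $l > 0$, I run down Table~\ref{t:2}: the groups $S$ (contributing $z=0$ since $\Out = 1$ has a nontrivial defect group, so actually $z(k\cdot 1) = 1$ — here I must be careful that the trivial group contributes $1$), the various copies of $S_3$ and $S_3 \times S_3$ (each contributing $1$), the copies of $S_3 \wr C_2$ (each contributing $0$), the single $S_3 \wr S_3$ and $S_3 \times S_3 \wr C_2$ (contributing $1$ each), and the groups $S_7$, $S_6$, $S_5$, $\GL_3(2)$, $\GL_4(2)$ (contributing $0,1,0,1,1$ respectively per Table~\ref{t:def0}). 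Summing these with the correct multiplicities should yield $12$. I would then repeat the identical tally for $\D = \H$, $l > 0$, using Table~\ref{t:3}, and separately for $\D = \F$ and $\D = \H$ when $l = 0$ using Table~\ref{t:1}. That the answer is $12$ in all four cases, independently of $l$, is the content of the corollary.

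The main subtlety, rather than obstacle, is ensuring that the count is genuinely independent of $l$: a priori the tables for $l = 0$ and $l > 0$ have different numbers of rows and different groups, so the agreement of the totals is a small miracle that the proof must simply verify case by case. A secondary point of care is the group $(C_3 \times C_3) \overset{-1}{\rtimes} C_2$ appearing in Table~\ref{t:1} (the $l=0$ case), which contributes $z = 4$ by Table~\ref{t:def0}; this large contribution must be offset by the several defect-zero groups ($S_7$, $S_5$, $A_7$, $S_3 \wr C_2$) that contribute $0$, and the arithmetic must be checked to land on $12$ rather than coincidentally nearby. There is no deep difficulty here once Tables~\ref{t:1}, \ref{t:2}, \ref{t:3}, and \ref{t:def0} are in hand; the corollary is a direct arithmetic consequence of those tables together with the identification of $z(kG)$ with the number of defect-zero blocks.
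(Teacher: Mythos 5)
Your overall route is the same as the paper's: with $\alpha = 0$, formula \eqref{e:weights} reduces to summing $z(k\Out_\D(Q))$ over the rows of the relevant table, where each summand is the number of blocks of defect zero, read off from Table~\ref{t:def0}. However, your tally contains a concrete error. You claim that $S_3 \times S_3 \wr C_2$ (the outer automorphism group of $Q_1'Q_2Q_3$ in Table~\ref{t:2}, to be parsed as $S_3 \times (S_3 \wr C_2)$) contributes $1$. It contributes $0$: conjugacy classes of a direct product $G \times H$ are products of classes and centralizers multiply (equivalently, blocks of $G \times H$ are tensor products of blocks, with defect group the direct product of the defect groups), so $z(k(G \times H)) = z(kG)\,z(kH)$, and $z(k(S_3 \wr C_2)) = 0$ by Table~\ref{t:def0}. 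With your value the sum over Table~\ref{t:2} comes out to $13$, contradicting the statement; with the correct value exactly twelve of the seventeen rows contribute $1$ --- all except $Q_1'Q_2Q_3$, the two rows with outer automorphism group $S_3 \wr C_2$, the row $R$ (with $S_7$), and the row $R^{**}$ (with $S_5$) --- giving $12$. The same multiplicativity is needed for the rows $S_3 \times S_3 \wr C_2$ and $S_3 \wr C_2 \times S_3$ of Table~\ref{t:3}, which likewise contribute $0$.

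A second gap concerns $\D = \H$, $l = 0$: the $\Out_\H$ column of Table~\ref{t:1} contains the group $(C_3)^3 \rtimes (C_2 \times C_2)$ (for $P = Q$), which does not occur in Table~\ref{t:def0} at all, so pure row-matching breaks down there. Since this group has a normal $2$-complement, Corollary~\ref{c:def0-2comp} reduces the problem to counting defect-zero classes: with one $C_2$ inverting $(C_3)^3$ and the other swapping the first two coordinates, an element $(a,b,c)$ has odd centralizer exactly when $a \neq b$ and not both $b = a^{-1}$ and $c = 1$, giving $16$ elements falling into $4$ classes, so $z = 4$. The resulting tally $1+4+4+1+0+1+1 = 12$ then settles this case. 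Finally, a smaller bookkeeping slip: you say the contribution of $4$ from $(C_3 \times C_3)\overset{-1}{\rtimes} C_2$ in the $l=0$ case is offset by $S_7$, $S_5$, $A_7$, and $S_3 \wr C_2$, but of these only $A_7$ actually appears in Table~\ref{t:1} (the others occur only for $l > 0$); in the $\Out_\F$ column for $l = 0$ the unique zero contribution is $A_7$, and the tally is $1+1+4+1+1+0+1+1+1+1 = 12$.
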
 

Note that $\mathbf{w}(\H,0)=12$ is known as a consequence of results in \cite{An1993}. 

\section{K\"ulshammer-Puig classes}\label{s:KP}

We give here a proof of Theorem~\ref{t:solblocks} essentially by direct
computation. Throughout this section, we fix an arbitrary nonnegative integer
$l$ and set $q = 5^{2^l}$. We adopt the notation $\F$, $\H$, $\K$ from
Section~\ref{s:solq}. These systems depend implicitly on $q$. 

Recall that the Schur multiplier of a finite group $G$ is the cohomology group
$M(G) := H^2(G,\mathbb{C}^\times)$. It is a finite abelian group. Given any
algebraically closed field $k$ of characteristic $2$, the $2'$-primary part of
$M(G)$ is isomorphic to $H^2(G,k^\times)$.  The approach taken to showing
Theorem~\ref{t:solblocks} requires the explicit computation of
$H^2(G,k^\times)$ (the values of the functor $\A^2$) for each group $G$
appearing as the outer automorphism group of a centric radical in
Section~\ref{s:solqcrel}. The computation of Schur multipliers of finite groups
is typically a delicate task. In our case the task is simpler for two reasons.
First, the outer automorphism groups are relatively small finite groups.
Second, the task is simpler because of the following lemma, which allows us in
many cases to reduce the computation of the odd part of the Schur multiplier to
computations of $H^2(G,\mathbb{F}_p)$ for odd primes $p$. A finite group is
said to be $p$-\emph{perfect} if it has no nontrivial $p$-group as a quotient.

\begin{Lem}
\label{lem:oddperfect}
Let $\D \in \{\F,\H,\K\}$. Then for each subgroup $P \in \D^{rc}$ and each odd
prime $p$, the outer automorphism group $\Out_\D(P)$ is $p$-perfect. 
\end{Lem}
\begin{proof}
Direct inspection of the outer automorphism groups in Tables~\ref{t:1} and
\ref{t:2}.
\end{proof}

The next lemma collects various standard results on group cohomology stated in
the special cases in which they will be used. We thank the referee for several
simplifications in our original arguments.

\begin{Lem}\label{lem:coh}
Let $G$ and $H$ be finite groups and let $k$ be an algebraically closed field
of characteristic $2$. Write $|G| = 2^rw$ where $w$ is odd. The following hold.
\begin{enumerate}
\item[(a)] For any abelian group $A$ with trivial $G$-action, $H^1(G,A) =
\Hom(G,A)$.
\item[(b)] There is a surjective map
\[
H^2(G, \mathbb{Z}/w\mathbb{Z}) \to H^2(G,k^\times), 
\]
which is an isomorphism if and only if $G$ is $p$-perfect for every odd
prime $p$ dividing $|G|$.
\item[(c)]  If $H^2(G,\mathbb{F}_p) = 0$ for all odd primes $p$, then
$H^2(G,k^\times) = 0$. 
\item[(d)]  If $p$ is odd and $G$ is a $p$-perfect group with cyclic Sylow $p$-subgroups then $H^2(G,\FF_p) = 0$.
\item[(e)]  If $G$ is a $p$-perfect group with an elementary abelian Sylow $p$-subgroup $V$ of order $p^2$, then 
\[
H^2(G,\FF_p) =
\begin{cases}
\FF_p & \text{ if $\Aut_G(V) \subseteq \SL(V)$, and}\\
0 & \text{ otherwise.}
\end{cases}
\]
\item[(f)]  If $G$ and $H$ are $p$-perfect, then 
\[
H^2(G \times H, \mathbb{F}_p) \cong H^2(G,\mathbb{F}_p) \oplus H^2(H, \mathbb{F}_p).
\]
\item[(g)] Let $p$ be an odd prime. If $G$ is $p$-perfect and the
$p$-part $M(G)_{p}$ of the Schur multiplier of $G$ is of exponent at most $p$
then
\[
M(G)_{p} = H^2(G,\CC^{\times}) \otimes \ZZ_{(p)} \cong H^2(G,\FF_p) \cong
H^2(G,k^{\times}) \otimes \ZZ_{(p)}. 
\]
Here, $\ZZ_{(p)}$ denotes the $p$-local integers.
\item[(h)] (Schur) If $M(G)$ has exponent $e$, then $e^2$ divides the order of
$G$. 
\end{enumerate}
\end{Lem}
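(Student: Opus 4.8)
The plan is to treat each part as an instance of a standard cohomological tool, the two genuinely substantive points being the structure of $k^\times$ in characteristic $2$ (parts (a)--(c)) and a representation-theoretic computation inside the cohomology of an elementary abelian $p$-group (part (e)). Everywhere I will use that for finite $G$ and $n \geq 1$ the group $H^n(G,M)$ is annihilated by $|G|$.

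Part (a) is immediate, since with trivial action a $1$-cocycle is exactly a homomorphism and all $1$-coboundaries vanish. For (b), I would first observe that, as $\mathrm{char}\,k = 2$ and $k$ is algebraically closed, the squaring map $x \mapsto x^2$ is an automorphism of $k^\times$; hence multiplication by $2^r$ is invertible on $H^2(G,k^\times)$, and since this group is annihilated by $|G| = 2^r w$ it is annihilated by $w$. A primitive $w$-th root of unity identifies $\mathbb{Z}/w\mathbb{Z}$ with $\mu_w = \ker(k^\times \xrightarrow{w} k^\times)$, and the sequence $0 \to \mu_w \to k^\times \xrightarrow{w} k^\times \to 0$ yields a long exact sequence in which the map of the statement is the one $H^2(G,\mu_w) \to H^2(G,k^\times)$ induced by inclusion; this is surjective because multiplication by $w$ is zero on the target. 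The only obstruction to injectivity is the image of $H^1(G,k^\times) = \Hom(G,k^\times)$, and $p$-perfectness for every odd $p \mid |G|$ forces $G^{\mathrm{ab}}$ to be a $2$-group, whence $\Hom(G^{\mathrm{ab}},k^\times) = 0$ since $k^\times$ has no $2$-torsion. Part (c) then follows from (b) by Bockstein induction: the sequences $0 \to \mathbb{F}_p \to \mathbb{Z}/p^a \to \mathbb{Z}/p^{a-1} \to 0$ show $H^2(G,\mathbb{F}_p) = 0 \Rightarrow H^2(G,\mathbb{Z}/p^a) = 0$ for all $a$, so $H^2(G,\mathbb{Z}/w) = \bigoplus_p H^2(G,\mathbb{Z}/p^{v_p(w)}) = 0$, and its quotient $H^2(G,k^\times)$ vanishes.

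For the remaining finite-coefficient parts I would invoke universal coefficients, $H^2(G,\mathbb{F}_p) \cong \Hom(M(G),\mathbb{F}_p) \oplus \Ext^1(G^{\mathrm{ab}},\mathbb{F}_p)$, noting that $p$-perfectness kills the second summand. Then (d) reduces to $M(G)_{(p)} = 0$, which holds since a group with cyclic Sylow $p$-subgroup has trivial $p$-part of its Schur multiplier. Part (f) is the K\"unneth formula over the field $\mathbb{F}_p$ (no $\Tor$ terms), in which the single cross term $H^1(G,\mathbb{F}_p) \otimes H^1(H,\mathbb{F}_p)$ vanishes by $p$-perfectness. Part (g) combines $H^2(G,\mathbb{C}^\times) \cong M(G)$ and $H^2(G,k^\times) \cong M(G)_{2'}$ with the above: $p$-perfectness removes the $\Ext$ term, and exponent $\leq p$ makes $M(G)_{(p)}$ elementary abelian, so $\Hom(M(G)_{(p)},\mathbb{F}_p) \cong M(G)_{(p)}$; localizing each expression at the odd prime $p$ identifies all four groups. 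Part (h) is Schur's classical theorem, which I would simply cite.

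The part carrying the real content, and the one I expect to be the main obstacle, is (e). Since $V$ is an abelian Sylow $p$-subgroup, Burnside's fusion theorem together with the Cartan--Eilenberg stable-element theorem gives $H^*(G,\mathbb{F}_p) \cong H^*(V,\mathbb{F}_p)^{W}$ with $W = \Aut_G(V)$. For $p$ odd, $H^*(V,\mathbb{F}_p) = \Lambda(x_1,x_2) \otimes \mathbb{F}_p[y_1,y_2]$ with $\deg x_i = 1$ and $y_i = \beta x_i$, so $H^2(V,\mathbb{F}_p) = \langle y_1,y_2\rangle \oplus \langle x_1 x_2\rangle$, where $\langle x_1,x_2\rangle \cong \langle y_1,y_2\rangle \cong V^*$ as $W$-modules (the Bockstein being $W$-equivariant) and $\langle x_1 x_2\rangle \cong \Lambda^2 V^*$ affords the determinant character. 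The hypothesis that $G$ is $p$-perfect gives $0 = H^1(G,\mathbb{F}_p) = (V^*)^{W}$, which kills the invariants in both copies of $V^*$; hence $H^2(G,\mathbb{F}_p) = (\Lambda^2 V^*)^{W}$, which is $\mathbb{F}_p$ exactly when $W$ acts trivially on $\Lambda^2 V^*$, i.e. when $\det(w) = 1$ for all $w \in W$, that is $\Aut_G(V) \subseteq \SL(V)$, and $0$ otherwise. The delicate points to get right are the $W$-equivariance and injectivity of the Bockstein on $H^1$ (valid precisely because $p$ is odd) and the identification of stable elements with $W$-invariants (valid because $V$ is abelian and Sylow).
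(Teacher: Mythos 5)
Your proposal is correct, and on the parts carrying the real weight it follows the paper's own route: parts (a)--(c) via the structure of $k^\times$ in characteristic $2$ and the coefficient sequence $1 \to \mu_w \to k^\times \to k^\times \to 1$ (the paper kills multiplication by $w$ on $H^2(G,k^\times)$ by factoring it as restriction to a Sylow $2$-subgroup followed by transfer, while you use invertibility of squaring on $k^\times$ directly; both work), and part (e) exactly as in the paper, via Burnside plus Cartan--Eilenberg stable elements and the splitting of $H^2(V,\FF_p)$ into two copies of $V^*$ (linked by the equivariant Bockstein) and the determinant line $\langle x_1x_2\rangle$. Where you genuinely diverge is in (d) and (g). The paper proves (d) by again passing to invariants, $H^*(G,\FF_p)\cong H^*(P,\FF_p)^{\Aut_G(P)}$ for the cyclic Sylow subgroup $P$, and transporting the vanishing of invariants from $H^1(P,\FF_p)$ to $H^2(P,\FF_p)$ along the Bockstein; and it proves (g) by tensoring the long exact sequence coming from powering by $p$ on $\CC^\times$ (and on $k^\times$) with $\ZZ_{(p)}$. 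You instead run both parts through the Universal Coefficient Theorem, $H^2(G,\FF_p)\cong \Hom(M(G),\FF_p)\oplus \Ext^1_{\ZZ}(G^{\mathrm{ab}},\FF_p)$, reducing (d) to the classical fact that a cyclic Sylow $p$-subgroup forces $M(G)_p=0$, and (g) to the identifications $H^2(G,\CC^\times)\cong M(G)$ and $H^2(G,k^\times)\cong M(G)_{2'}$. Your route is more uniform and, in (g), actually cleaner: it sidesteps the paper's slightly imprecise claim that powering by $p$ is zero on all of $H^2(G,\CC^\times)$ (it is only zero on the $p$-part under the stated hypothesis). The trade-off is that you import standard Schur-multiplier facts whose proofs are themselves transfer/restriction arguments -- in particular you should at least note why $M(G)_p$ embeds in $M(P)$, the one-line corestriction-restriction argument behind the cyclic-Sylow fact -- whereas the paper's (d) is self-contained given stable elements. (One cosmetic point: $\Lambda^2 V^*$ affords $\det^{-1}$ of the action on $V$ under the usual dual-action convention, but since $\det$ and $\det^{-1}$ have the same kernel $\SL(V)$, your conclusion in (e) is unaffected; the paper commits the same harmless abuse.)
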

\begin{proof}
\begin{enumerate}
\item[(a)] This follows from the description $H^1(G,A)$ as the group of
derivations $G \to A$ \cite[Corollary~6.4.6]{WeibelHA}.
\item[(b)] Fix a Sylow $2$-subgroup $S$ of $G$.  Since $k^{\times}$ has all odd
roots of unity, powering by $w$ is a surjective endomorphism with kernel
$\mathbb{Z}/w\mathbb{Z}$. Thus, there is an exact sequence
\[
H^1(G,k^{\times}) \to H^2(G,\mathbb{Z}/w\mathbb{Z}) \to H^2(G,k^\times) \to H^2(G,k^{\times}).
\]
The last map is multiplication by $w = |G:S|$, and so it factors as
\[
H^2(G,k^{\times}) \xrightarrow{\res} H^2(S,k^{\times}) \xrightarrow{\tr} H^2(G,k^{\times})
\]
by \cite[Proposition~3.6.17]{Benson1998b} applied with $M = M' =
k^{\times}$.  Since $H^2(S,k^{\times}) = 0$, we conclude that the last map is
$0$. The middle map is therefore a surjection, and since $H^1(G,k^\times) =
\Hom(G,k^{\times})$ by (a), we see that it is an isomorphism if $G$ is
$p$-perfect for every odd prime $p$ dividing $|G|$. 
\item[(c)]  This follows upon filtering $\ZZ/w\ZZ$ by subgroups of prime
order, considering the corresponding long exact sequences in cohomology, and
applying (b).
\item[(d)] Let $P$ be a Sylow $p$-subgroup with $p$ odd. Restriction induces an
isomorphism $H^*(G,\FF_p) \to H^*(N_G(P),\FF_p) \cong
H^{*}(P,\FF_p)^{\Aut_G(P)}$ by \cite[Corollary~3.6.19]{Benson1998b} applied
with $M = M' = \mathbb{F}_p$. Now $H^*(P,\FF_p) \cong \FF_p[x,y]/(x^2)$ with
$\deg x = 1$ and $\deg y = 2$ \cite[Proposition~3.5.5]{Benson1998b}, and the
Bockstein $H^1(P,\FF_p) \to H^2(P,\FF_p)$ is an isomorphism of $N_G(P)$-modules
(cf. \cite[p.132, Example]{Benson1998b}). As $N_G(P)$ has no invariants in
$H^1(P,\FF_p)$ by assumption it also has no invariants on $H^2(P,\FF_p)$. 

\item[(e)] Restriction to $V$ again identifies $H^*(G,\FF_p)$ with the
invariants $H^*(V,\FF_p)^{\Aut_G(V)}$.  Now 
\[
H^{*}(V,\FF_p) \cong \Lambda_{\FF_p}(x_1,x_2) \otimes \FF_p[y_1,y_2],
\]
with $\deg x_i = 1$ and $\deg y_i = 2$ by
\cite[Corollary~3.5.7(ii)]{Benson1998b}, so that
\[
H^1(V,\FF_p) = \gen{x_1,x_2}_{\FF_p} \quad \text{ and } \quad H^2(V,\FF_p) =
\gen{y_1,y_2, x_1x_2}_{\FF_p}. 
\]
Here, $H^1(V,\FF_p)$ is the natural module for $\Aut(V) \cong \GL(V)$, while
$H^2(V,\FF_p)$ is the direct sum of the natural module $\gen{y_1,y_2}_{\FF_p}$
and $\gen{x_1x_2}_{\FF_p}$ on which $\GL(V)$ acts via the determinant map.  By
assumption, $\Aut_G(V) \leq \GL(V)$ has no fixed points on the natural module,
so $H^2(V,\FF_p)$ is nontrivial generated by $x_1x_2$ if and only if every
element of $\Aut_G(V)$ has determinant 1.
\item[(f)]  This follows from the K\"unneth Theorem \cite[Theorem
2.7.1]{Benson1998} and the assumption. 
\item[(g)] Powering by $p$ on $\CC^{\times}$ gives the exact sequence
\[
H^1(G,\CC^\times) \to H^2(G,\FF_p) \to H^2(G,\CC^\times) \xrightarrow{p} H^2(G,\CC^{\times}). 
\]
The assumptions imply that tensoring with $\ZZ_{(p)}$ kills $H^1(G,\CC^\times)$
and the last map.  Hence, $H^2(G,\FF_{p}) \cong H^2(G,\CC^\times) \otimes
\ZZ_{(p)}$. Since $M_{2'}(G)$ is isomorphic to $H^{2}(G,k^\times)$
(\cite[Proposition 2.1.14]{Karpilovsky1987}) and $p$ is odd, the $p$-primary part of $H^2(G,k^\times)$ is
of exponent at most $p$ by assumption. Thus, the exact same argument with
$k^{\times}$ in place of $\CC^{\times}$ shows that $H^2(G,\FF_p) \cong
H^2(G,k^{\times}) \otimes \ZZ_{(p)}$.
\item[(h)] We refer to \cite[2.1.5]{Karpilovsky1987} for a proof.
\end{enumerate}
\end{proof}

We are interested in computing the cohomology of the functor $H^2(-,k^\times)$
defined on the subdivision category of the full subcategory of the fusion
systems $\F$, $\H$, and $\K$, respectively, on the collection of centric
subgroups.  

Let $\C$ be any full subcategory of a saturated fusion system.
Recall from \cite[Definition~8.13.2]{LinckelmannBT2} that the \emph{subdivision
category} $S(\C)$ of proper inclusions is the category with objects the chains
$\sigma = (X_0 < X_1 < \cdots < X_m)$ (of proper inclusions) in $\C$; here $m$
is the \emph{length} $|\sigma|$ of $\sigma$. Given another object $\tau = (Y_0
< Y_1 < \cdots < Y_n) \in S(\C)$, a morphism from $\sigma$ to $\tau$ consists
of an order preserving function $\beta\colon \{0,1\dots,m\} \to
\{0,1,\dots,n\}$ together with isomorphisms $\phi_i \colon X_i \to
Y_{\beta(i)}$ in $\C$ for each $i \in \{0,1,\dots,m\}$ which make the evident
skewed ladder commute. In particular, the automorphism group of the chain
$\sigma$ in $\C$ may be identified with the group of automorphisms of $X_m$
which preserve $X_i$ for all $0 \leq i \leq m$.  We write $[S(\C)]$ for the
poset of isomorphism classes of objects in $S(\C)$, where $[\sigma] \leq
[\tau]$ if there are representatives $\sigma' \in [\sigma]$ and $\tau' \in
[\tau]$ and a morphism $\sigma' \to \tau'$ in $S(\C)$. 

There is a simpler resolution than the standard bar resolution for computing
cohomology of a functor defined on the subdivision category of any EI-category,
which was given in \cite{Linckelmann2005}.

\begin{Lem}\label{L:cochain}
Let $\C$ be any full subcategory of a saturated fusion system and let $F\colon
[S(\C)] \to \Ab$ be a covariant functor. The cohomology groups
$H^n([S(\C)],F)$, and thus the derived functors of $\lim F$, can be computed
via the cochain complex $C^*(F)$ defined as follows:
\[
C^n(F) = \bigoplus_{|\sigma|=n} F([\sigma]),
\]
whose elements are viewed as functions $\alpha$ from isomorphism classes of
chains of length $n$, and where $|\sigma|$ denotes the length of $\sigma$. The
coboundary map $\delta^n \colon C^n(F) \to C^{n+1}(F)$ is defined by
\[
\delta^n(\alpha)([\sigma]) =  \sum_{i = 0}^n (-1)^{i} F(\iota_{[\sigma(i)], [\sigma]})(\alpha([\sigma(i)])),
\]
where $\sigma(i)$ denotes the chain $\sigma$ with its $i$th term removed, and
$\iota_{[\sigma(i)], [\sigma]}$ denotes the unique morphism from $[\sigma(i)]$
to $[\sigma]$.
\end{Lem}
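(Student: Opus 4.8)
The plan is to identify the complex $C^*(F)$ with the cochain complex computing the cohomology of the poset $\P := [S(\C)]$ with coefficients in $F$, and hence with the higher limits $\lim^*_{\P} F$. First I would recall the general principle that for a small category $\mathcal{I}$ and a covariant functor $F \colon \mathcal{I} \to \Ab$, the derived functors $\lim^n_{\mathcal{I}} F$ coincide with the category cohomology $H^n(\mathcal{I}; F)$, which is $\Ext^n(\underline{\ZZ}, F)$ computed in the abelian category of covariant functors $\mathcal{I} \to \Ab$, with $\underline{\ZZ}$ the constant functor. Since $\P$ is a poset, every hom-set has at most one element, so the only morphisms in sight are the face inclusions $\iota_{[\sigma(i)], [\sigma]}$ and their composites, and I would phrase everything in terms of these.

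The structural point is that $\P$ is the face poset of the abstract simplicial complex whose length-$n$ simplices are the $\C$-isomorphism classes $[\sigma]$ of chains of $\C$-subgroups, with $i$th face $[\sigma(i)]$. I would then recognize $C^*(F)$ as $\Hom_{\P}(P_\bullet, F)$, where
\[
P_n := \bigoplus_{|\sigma| = n} \ZZ\bigl[\P([\sigma], -)\bigr]
\]
is the free functor on the length-$n$ classes and the boundary $P_n \to P_{n-1}$ is the alternating sum $\sum_{i}(-1)^i$ of the maps induced by the face inclusions. The Yoneda isomorphism $\Hom_{\P}(\ZZ[\P([\sigma], -)], F) \cong F([\sigma])$ turns $\Hom_{\P}(P_\bullet, F)$ into exactly $C^*(F)$ with the prescribed differential $\delta^n$. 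Once one knows that $P_\bullet \to \underline{\ZZ}$ is a projective resolution, it follows that $H^*(C^*(F)) = \Ext^*(\underline{\ZZ}, F) = \lim^*_{\P} F$, the assertion of the lemma. Well-definedness of $\delta^n$ on isomorphism classes is automatic: a chain isomorphism preserves the ordering of terms, so the iso-class $[\sigma(i)]$ and its index and sign depend only on $[\sigma]$ and $i$, and every term of the formula refers to honest objects and morphisms of $\P$.

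The verification that $\delta^{n+1} \delta^n = 0$ (equivalently that $P_\bullet$ is a complex) is routine: it follows from the simplicial face identities $\sigma(i)(j) = \sigma(j+1)(i)$ for $i \le j$ together with the functoriality of $F$, which force the double sum to cancel in pairs. I would also record that the structure maps $F(\iota_{[\sigma(i)], [\sigma]})$ are part of the given functor data; in the intended application $F = \A_\F^2$ they are the restriction maps $H^2(\Aut_\F(\sigma(i)), k^\times) \to H^2(\Aut_\F(\sigma), k^\times)$ attached to the inclusions $\Aut_\F(\sigma) \le \Aut_\F(\sigma(i))$.

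The step I expect to be the main obstacle is the exactness of $P_\bullet \to \underline{\ZZ}$, equivalently the claim that this \emph{reduced} deletion complex computes the same higher limits as the full nerve (bar) complex of $\P$, whose $n$-cochains are indexed by flags $[\sigma_0] < \cdots < [\sigma_n]$ of chains. This is a subdivision-invariance phenomenon, the order complex of the face poset $\P$ being the barycentric subdivision of the complex of chains. I would check exactness objectwise: evaluating $P_\bullet$ at a fixed class $[\tau]$ yields the augmented chain complex of the (iso-classes of) subchains of $\tau$, which is contractible by a cone homotopy adjoining the top term of $\tau$. The one genuinely delicate point is the bookkeeping needed when distinct faces of a single chain become identified under $\C$-isomorphism, so that the relevant complex is a quotient of a simplex rather than a simplex itself; this is precisely the issue addressed by the standard treatment of higher limits over the subdivision category, which I would either reproduce in the present setting or cite.
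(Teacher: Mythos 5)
Your proposal is correct, but it takes a genuinely different route from the paper, for the simple reason that the paper offers no argument at all: its proof of this lemma is the citation ``This is \cite[Proposition~3.2]{Linckelmann2005}, applied as in \cite[Lemma~3.1]{Park2010}.'' What you have written is, in effect, a proof of the cited result itself: identify $C^*(F)$ with $\Hom_{\P}(P_\bullet,F)$ via Yoneda, where $P_n = \bigoplus_{|\sigma|=n}\ZZ\bigl[\P([\sigma],-)\bigr]$ is a sum of linearized representables (hence projective) with alternating-sum face differentials, check $\delta^{n+1}\delta^n = 0$ from the simplicial identities, and verify objectwise that $P_\bullet \to \underline{\ZZ}$ is exact, so that $H^*(C^*(F)) = \Ext^*(\underline{\ZZ},F) = \lim^*_{\P} F$. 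The paper's citation buys brevity and defers to a known result; your argument buys self-containedness and makes visible exactly why the small ``deletion'' complex, rather than the full bar complex indexed by flags $[\sigma_0]<\cdots<[\sigma_n]$ in the poset, computes the higher limits.

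One improvement is worth recording: the single step you flag as genuinely delicate --- that distinct faces of a chain might become identified under $\C$-isomorphism, so that $P_\bullet([\tau])$ could be a quotient of a simplex --- never actually occurs, and no subdivision machinery needs to be reproduced or cited. If $\tau = (R_0 < \cdots < R_m)$, the orders $|R_0| < \cdots < |R_m|$ are strictly increasing; an isomorphism of chains in $\C$ carries each term of one chain isomorphically onto the corresponding term of the other, so two subchains $\tau_I$, $\tau_J$ with $I \neq J \subseteq \{0,\dots,m\}$ have distinct multisets of group orders and cannot be $\C$-isomorphic. Hence $I \mapsto [\tau_I]$ is an isomorphism from the poset of nonempty subsets of $\{0,\dots,m\}$ onto the lower interval of $\P$ below $[\tau]$, the evaluation $P_\bullet([\tau])$ is the augmented simplicial chain complex of an honest $m$-simplex, and your cone homotopy (coning off a fixed vertex) finishes the exactness. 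With that observation inserted, your outline is a complete and correct proof.
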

\begin{proof}
This is \cite[Proposition~3.2]{Linckelmann2005}, applied as in
\cite[Lemma~3.1]{Park2010}.
\end{proof}

Finally, the aim of the following, highly specialized lemma is to orient
the reader to the way in which Lemma~\ref{L:cochain} will be used later in the
proof of Theorem~\ref{t:lim0}.

\begin{Lem}\label{L:limposet}
Let $\C$ be a saturated fusion system, and let $F \colon [S(\C^{cr})] \to \Ab$
be a covariant functor. Then $\Lim{[S(\C^{cr})]}F = 0$ under either of the
following conditions.
\begin{enumerate}
\item[(a)] $F([X]) = 0$ for all subgroups $X \in \C^{cr}$;
\item[(b)] $F$ is zero on all but two distinct chains $[X_0]$ and $[X_1]$ of
length zero, and there exists a subgroup $Y \in \C^{cr}$ such that $F([Y])= 0$,
\begin{enumerate}
\item[(i)] $X_0 < X_1 > Y$, and
\item[(ii)] the maps $F([X_0]) \to F([X_0 < X_1])$ and $F([X_1]) \to F([Y
< X_1])$ are injective.
\end{enumerate}
\end{enumerate}
\end{Lem}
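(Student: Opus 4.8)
The plan is to identify $\Lim{[S(\C^{cr})]}F$ with the kernel of the first coboundary in the cochain complex $C^*(F)$ supplied by Lemma~\ref{L:cochain}. Indeed $\Lim{[S(\C^{cr})]}F = H^0(C^*(F)) = \ker\bigl(\delta^0\colon C^0(F) \to C^1(F)\bigr)$, so it suffices to show this kernel vanishes. Recall that $C^0(F) = \bigoplus_{[X]} F([X])$ is indexed by the $\C$-isomorphism classes of length-zero chains, i.e.\ by single subgroups in $\C^{cr}$, and that for a length-one chain $\sigma = (X_0 < X_1)$ the coboundary formula of Lemma~\ref{L:cochain} reads
\[
\delta^0(\alpha)([X_0 < X_1]) = F(\iota_{[X_1],[\sigma]})(\alpha([X_1])) - F(\iota_{[X_0],[\sigma]})(\alpha([X_0])).
\]
Hence $\alpha$ lies in the limit precisely when, for every length-one chain, the images of $\alpha([X_0])$ and $\alpha([X_1])$ in $F([\sigma])$ agree. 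Case (a) is then immediate: if $F([X]) = 0$ for every $X \in \C^{cr}$ then $C^0(F) = 0$, and so the limit is zero.

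In case (b) the support hypothesis yields $C^0(F) = F([X_1]) \oplus F([X_2])$, so an element $\alpha$ of the limit is determined by the pair $(\alpha([X_1]), \alpha([X_2]))$, and I need only force both coordinates to vanish. First I would apply the compatibility relation to the chain $[Y < X_2]$, which exists by (i). Since $Y < X_2$ is proper and $[Y]$ is distinct from $[X_1]$, the class $[Y]$ equals neither $[X_1]$ nor $[X_2]$, so $F([Y]) = 0$ by the support hypothesis; the relation for this chain therefore collapses to $F(\iota_{[X_2],[Y<X_2]})(\alpha([X_2])) = 0$, and the injectivity in (iii) gives $\alpha([X_2]) = 0$. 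Feeding this into the relation for the chain $[X_1 < X_2]$ yields $F(\iota_{[X_1],[X_1<X_2]})(\alpha([X_1])) = 0$, whereupon the injectivity of this map recorded in (ii) forces $\alpha([X_1]) = 0$. Thus $\alpha = 0$ and the limit is trivial.

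Being a purely formal computation, the argument presents no serious obstacle; the only care required is bookkeeping, namely fixing the directions of the structure maps $\iota_{[\sigma(i)],[\sigma]}$ and the signs in $\delta^0$, and checking $F([Y]) = 0$ from the support hypothesis. The one point worth flagging is that, while membership in the limit imposes a relation on \emph{every} length-one chain, only the two chains $[Y < X_2]$ and $[X_1 < X_2]$ are needed to collapse the kernel, so the remaining chains may be ignored. I also observe that this direction uses only injectivity of the left-hand map in (ii) together with (iii) and the vanishing of $F([Y])$; the equality of images and the injectivity of the right-hand map in (ii) are not consumed here, and presumably serve to make the hypotheses checkable when the lemma is later applied.
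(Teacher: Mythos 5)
Your proof is correct and takes essentially the same route as the paper: both identify $\Lim{[S(\C^{cr})]}F$ with $\ker\delta^0$ via Lemma~\ref{L:cochain}, note that the support hypothesis gives $C^0(F) = F([X_1]) \oplus F([X_2])$, and use exactly the two chains $[Y < X_2]$ and $[X_1 < X_2]$ (with hypothesis (iii) and the injectivity of the left-hand map in (ii)) to force both coordinates of a $0$-cocycle to vanish. Your closing observation is also accurate with respect to the paper's own argument, which likewise does not consume the equality of images or the injectivity of the right-hand map in (ii) for this direction.
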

\begin{proof}
We view $\Lim{[S(\C^{cr})]} F$ as the degree $0$ cohomology of the functor $F$.
As such it can be computed by using the cochain complex of
Lemma~\ref{L:cochain}.  The coboundary map $\delta^0 \colon C^0(F) \to C^1(F)$
on $0$-cochains is obtained by extending linearly from
\[
\delta^0(\alpha)([X < X']) = F(\iota_{[X'],[X<X']})(\alpha([X'])) - F(\iota_{[X],[X<X']})(\alpha([X])).
\]

With this in mind, the two parts of the lemma are simply ways of saying that
the kernel of $\delta^0$, and thus $\Lim{[S(\C^{cr})]} F$, is $0$.  This is
trivial in the case of part (a). The assumption in (b) implies that $C^0(F) =
F([X_0]) \oplus F([X_1])$ and then (i) and (ii) ensure that the composite
\[
C^0(F) \xrightarrow{\delta^0} C^1(F) \xrightarrow{\operatorname{proj}} F([X_0 <
X_1]) \oplus F([Y < X_1])
\]
is injective.
\end{proof}

We now begin the computation of the higher limits of $H^2(-,k^\times)$ in
the cases of interest.

\begin{Lem}\label{l:trivmult}
Fix an algebraically closed field $k$ of characteristic $2$, and let $\D
\in \{\F,\H,\K\}$. For each $P \in \D^{cr}$, one of the following holds.
\begin{enumerate}
\item[(a)] $H^2(\Out_\D(P), k^\times) = 0$, or
\item[(b)] $l = 0$, $H^2(\Out_\D(P), k^\times) \cong H^2(\Out_\D(P), \FF_3) \cong C_3$, and either
\begin{enumerate}
\item[(i)] $P = QR_{1^7}$, or
\item[(ii)] $P = Q$ and $\D = \H$, or
\item[(iii)] $P = R_{1^7}$ and $\D = \H$ or $\F$. 
\end{enumerate}
\end{enumerate}
\end{Lem}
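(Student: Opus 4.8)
The plan is to go through each $P \in \D^{cr}$ by consulting the relevant tables (Table~\ref{t:1} for $l = 0$, Tables~\ref{t:4}, \ref{t:3}, \ref{t:2} for $l > 0$) and computing $H^2(\Out_\D(P), k^\times)$ using the cohomological tools assembled in Lemma~\ref{lem:coh}. Since we are in characteristic $2$, Lemma~\ref{lem:coh}(c) reduces the problem to checking whether $H^2(\Out_\D(P), \FF_p) = 0$ for each odd prime $p$, and by Schur's bound in Lemma~\ref{lem:coh}(h) the only relevant primes for these small groups are $p = 3$, $5$, and $7$. First I would dispose of the groups whose order involves the primes $5$ or $7$ to a small power: for instance $\GL_4(2) \cong A_8$, $S_7$, $S_6$, $A_7$, $S_5$, and $\GL_3(2)$ all have well-documented Schur multipliers, and one checks directly that the $5$- and $7$-parts are trivial, so only the prime $3$ can contribute.

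The bulk of the work is the systematic treatment of the prime $3$. For each group $G = \Out_\D(P)$, I would first verify that $G$ is $3$-perfect (otherwise $H^2(G, \FF_3) = 0$ follows quickly, e.g.\ from Lemma~\ref{lem:coh}(c) applied after passing to $O^{3'}$, or directly because a non-$3$-perfect group has nontrivial $H^1$ but here the relevant obstruction vanishes). Then I would compute the Sylow $3$-subgroup $V = \Syl_3(G)$ and apply the appropriate part of Lemma~\ref{lem:coh}: part (d) when $V$ is cyclic (forcing $H^2(G,\FF_3) = 0$), and part (e) when $V \cong C_3 \times C_3$ (where nonvanishing is controlled by whether $\Aut_G(V) \subseteq \SL(V)$). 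Part (f) reduces direct products to their factors. Concretely, the groups appearing with a $(C_3)^3$ or $C_3 \times C_3$ Sylow $3$-subgroup — namely the wreath-type groups $S_3 \wr S_3$, $S_3 \wr C_2$, $(C_3 \times C_3)\rtimes C_2$, $(C_3)^3 \rtimes (C_2 \times S_3)$, and the products $S_3 \times S_3$, etc.\ — are where I expect the real case analysis to live, since for these I must identify $\Aut_G(V)$ and decide membership in $\SL(V)$.

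The main obstacle, and the payoff, is pinning down exactly which configurations survive. For the $l > 0$ tables, every outer automorphism group is either a direct/wreath product of copies of $S_3$ (where each $S_3$ factor has cyclic Sylow $3$-subgroup, killing $H^2$ by part (d) and part (f)) or one of the larger groups already handled, so I expect to show that (a) holds uniformly when $l > 0$. The delicate cases are concentrated at $l = 0$ in Table~\ref{t:1}: here $\Out_\D(P)$ can be $(C_3 \times C_3)\rtimes C_2$ or $(C_3)^3 \rtimes (C_2 \times S_3)$, and I must determine the $\SL(V)$ condition of Lemma~\ref{lem:coh}(e). For $P = QR$ the relevant $C_2$ acts by inversion $-1$ on $C_3 \times C_3$, which lies in $\SL_2(3)$ (determinant $(-1)^2 = 1$), yielding $H^2 \cong \FF_3 \cong C_3$ as claimed in (b)(i); the analogous computations for $P = Q$ with $\D = \H$ and $P = R$ with $\D = \H$ or $\F$ isolate exactly cases (b)(ii) and (b)(iii). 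The hard part will be correctly extracting $\Aut_G(V)$ from the semidirect-product descriptions in the tables and verifying the determinant condition in each of these few critical cases, while confirming that in all other $l = 0$ entries the action introduces an element of determinant $-1$ (or a cyclic Sylow $3$-subgroup), forcing vanishing.
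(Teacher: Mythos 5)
Your overall strategy (reduce to odd primes via Lemma~\ref{lem:coh}(c), kill $p\geq 5$ using cyclic Sylow subgroups or known multipliers, then analyze $p=3$ case by case) matches the paper's, and your treatment of several entries is correct: the groups with cyclic Sylow $3$-subgroups, the groups with Sylow $3$-subgroup $C_3\times C_3$ such as $S_6$, $S_7$, $\GL_4(2)$, $S_3\times S_3$, $S_3\wr C_2$, and the two genuinely ``rank $2$'' exceptional cases $P=QR$ (where inversion has determinant $1$, giving $C_3$) and $P=R$ with $\Out_\D(P)\cong A_7$. However, there is a genuine gap: your toolkit consists only of Lemma~\ref{lem:coh}(d) (cyclic Sylow), (e) (elementary abelian Sylow \emph{of order exactly $p^2$}), and (f) (K\"unneth for \emph{direct} products), and several of the groups in the tables have Sylow $3$-subgroups of order $27$ or $81$, where none of these applies. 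Concretely: $S_3\wr S_3$ (which occurs for every $l$, e.g.\ as $\Out_\F(Q_1Q_2Q_3)$) has Sylow $3$-subgroup $C_3\wr C_3$ of order $81$, and a wreath product is not a direct product, so part (f) cannot be combined with part (d) as you suggest; similarly, for $l=0$ the groups $\Out_\K(Q)=\Out_\F(Q)\cong (C_3)^3\rtimes(C_2\times S_3)$ (Sylow $3$-subgroup $C_3\wr C_3$) and $\Out_\H(Q)\cong (C_3)^3\rtimes(C_2\times C_2)$ (Sylow $3$-subgroup elementary abelian of rank $3$) are outside the scope of part (e). The paper has to bring in additional machinery exactly here: the Lyndon--Hochschild--Serre spectral sequence for $S_3\wr S_3$, Nakaoka's theorem on the cohomology of $C_3\wr C_3$ for the $\K$ and $\F$ cases of $P=Q$, and a direct computation of the $\gen{\mathbf{d},\tau}$-invariants in $H^2((C_3)^3,\FF_3)=\gen{x,y,z,x_1x_2,x_1x_3,x_2x_3}_{\FF_3}$ for the $\H$ case of $P=Q$.

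Moreover, these are not merely technical omissions that your determinant heuristic would resolve anyway: in rank $3$ the heuristic gives the wrong answer. Inversion on $(C_3)^3$ has determinant $(-1)^3=-1$, so a naive extension of the $\SL(V)$ criterion would predict vanishing for $\Out_\H(Q)\cong (C_3)^3\rtimes(C_2\times C_2)$; in fact the invariant subspace is spanned by $x_1x_3+x_2x_3$ and $H^2(\Out_\H(Q),k^\times)\cong C_3$, which is precisely case (b)(ii) of the statement. Conversely, for $\D=\K$ or $\F$ the larger group $(C_3)^3\rtimes(C_2\times S_3)$ has $H^2=0$, a distinction your argument cannot see, since both groups contain the same inverting involution. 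So the dichotomy between (b)(ii) and case (a) for $P=Q$ -- arguably the most delicate point of the lemma -- is exactly where the proposal would fail; you need the invariant-theoretic computations (or the spectral sequence/Nakaoka decompositions) rather than a membership-in-$\SL(V)$ test.
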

\begin{proof}
We first prove the lemma for $l > 0$. Fix $P \in \D^{cr}$ appearing in
Tables~\ref{t:4}, \ref{t:3}, or \ref{t:2}, and let $G = \Out_\D(P)$ be its
outer automorphism group in $\D$, for short. In order to show that (a) holds in
this case $(l > 0)$, it suffices to show that $H^2(G,\FF_p) = 0$ for all odd
primes $p$ by Lemma~\ref{lem:coh}(c). Now $G$ is $p$-perfect for all odd primes
$p$ by Lemma~\ref{lem:oddperfect}. An inspection of the tables shows that one
of three cases holds: (1) $G$ has cyclic Sylow $p$-subgroups for all odd primes
$p$, or (2) $G$ has cyclic Sylow $p$-subgroups for all $p \geq 5$ and
elementary Sylow $3$-subgroups of order $3^2$, or (3) $G \cong S_3 \wr S_3$. By
Lemma~\ref{lem:coh}(d), we have $H^2(G,\FF_p)=0$ for all odd primes $p$ in Case
(1). Assume (2). Then $G \cong S_6$, $S_7$, $GL_4(2)$, $S_3 \times S_3$, or
$S_3 \wr C_2$. In all cases, $H^2(G,\FF_p) = 0$ for all $p \geq 5$ again by
Lemma~\ref{lem:coh}(d). For a Sylow $3$-subgroup $V$ of $G$, we have $\Aut_G(V)
\nleq \SL(V)$ by direct computation, and so $H^2(G,\mathbb{F}_3) = 0$ in Case
(2) as well, by Lemma~\ref{lem:coh}(e). Finally, assume Case (3), so that $G
\cong S_3 \wr S_3$. Again, we just need to show that $H^2(G,\FF_3) = 0$.
In this case one can apply the Lyndon-Hochschild-Serre spectral sequence
\cite[6.8.2]{WeibelHA} with respect to the base $B$ of the wreath product. The
relevant parts of the $E_2$-page are
\begin{itemize}
\item $H^{0}(S_3, H^{2}(B, \mathbb{F}_3)) = 0$ (the coefficients are 0);
\item $H^{1}(S_3, H^1(B, \mathbb{F}_3)) = 0$ (since the base is $3$-perfect); and 
\item $H^2(S_3, H^0(B, \mathbb{F}_3)) = 0$ (trivial invariants). 
\end{itemize}
Hence, $H^2(G,\mathbb{F}_3) = 0$. This completes the proof in the case $l >
0$. 

We now turn to the case $l=0$. By inspection of Table~\ref{t:1}, either it was
shown in the previous case that $H^2(\Out_\D(P), k^\times) = 0$, or else the
subgroup $P$ is listed in (b)(i)-(b)(iii) of the lemma. We go through these
three cases in turn, and we set $G = \Out_\D(P)$ again for
short.\newline
\newline
\noindent
\textit{Case 1.} $P = QR_{1^7}$ and $G \cong (C_3 \times C_3)
\overset{-1}{\rtimes} \gen{\mathbf{d}}$: 

Recall that $M_{2'}(G) \cong H^2(G,k^\times)$ is an abelian group of odd
order, so it must be $M_3(G)$.  Let $e$ be its exponent. From
Lemma~\ref{lem:coh}(h), $e^2$ divides $|G| = 3^2\cdot 2$. Hence, $e = 1$ or
$3$. By Lemma~\ref{lem:coh}(g), we see that 
\[
H^2(G, \FF_3) \cong H^2(G, k^\times) \otimes \ZZ_{(3)} = H^2(G,k^\times),
\]
and $H^2(G, \FF_3) \cong \FF_3$ by Lemma~\ref{lem:coh}(e) ($\mathbf{d}$ acts by
minus the identity). This completes the proof of Case 1.
\newline \newline
\textit{Case 2.} $P = Q$: Suppose first that $\D = \H$. Then $G :=
\Out_\D(P) \cong C_3^3 \rtimes (C_2 \times C_2)$ with one factor inverting
$C_3^3$ and the other swapping the first two $C_3$ factors. We first claim
that the exponent of $H^2(G,k^\times)$ is not divisible by $3^2$. Indeed, this
follows directly from Lemma~\ref{lem:coh}(h), as otherwise $|G|$ would be
divisible by $3^4$, which is not the case. It follows that $H^2(G,k^\times)
\cong H^2(G,\FF_3)$ by Lemma~\ref{lem:coh}(g).  Now
\begin{equation}\label{e:wr}
H^2(C_3^3, \FF_3) = \gen{x_1x_2, x_1x_3, x_2x_3, y_1, y_2, y_3}_{\FF_3},
\end{equation}
where the $y_i$ are polynomial generators and the $x_i$ are exterior
generators. Further, $H^2(G, \FF_3)$ is the invariants under
$\gen{\mathbf{d},\tau}$ here. We compute directly that the invariants are
spanned by $x_1x_3 + x_2x_3$, and so have dimension $1$.  Thus, $H^2(G,
k^\times) \cong H^2(G,\FF_3) \cong  C_3$.

Now suppose that $\D = \K$ or $\F$. Then $G=\Out_\D(P) \cong C_3^3 \rtimes 
(\gen{\mathbf{d}} \times S_3)$ with $\mathbf{d}$ inverting and we have \begin{align*}\label{e:wreathinvariants}
H^2(G,\mathbb{F}_3) = H^2(C_3 \wr C_3, \mathbb{F}_3)^{\langle \textbf{d}, \tau \rangle}, 
\end{align*} since a Sylow $3$-subgroup is normal in $G$. Let $W \cong C_3 \wr C_3$ be the
Sylow $3$-subgroup of $G$, and write $W_0$ for the base subgroup of $W$. 
By a result of Nakaoka \cite[Theorem~3.3]{Nakaoka1961}, we have

\begin{align*}
H^2(C_3 \wr C_3, \mathbb{F}_3) &= 
H^0(C_3 , H^2(W_0, \FF_3)) \oplus
H^1(C_3, H^1(W_0,\FF_3)) \oplus
H^2(C_3, H^0(W_0,\FF_3))
\end{align*}

The middle term above vanishes: by Lemma~\ref{lem:coh}(a), 
\[
H^1(W_0, \FF_3) \cong \Hom_{\FF_3}(\FF_3[C_3],\FF_3) = \Coind_1^{C_3} \FF_3
\]
as a $W/W_0$-module, so that $H^1(C_3, H^1(W_0, \FF_3)) \cong H^1(C_3,
\Coind_1^{C_3} \FF_3) = 0$ by Shapiro's Lemma \cite[Lemma~6.3.2]{WeibelHA}. 
Hence,
\[
H^2(C_3\wr C_3) = H^2(W_0, \FF_3)^{C_3} \oplus H^2(C_3, \FF_3). 
\]
With notation as in \eqref{e:wr}, the first summand is spanned by
$y_1+y_2+y_3$ and $x_1x_2+x_2x_3+x_3x_1$, both being negated by the action
of $\mathbf{d}\tau$ (note that $\tau$ negates $x_1x_2+x_2x_3+x_3x_1$).
Similarly, the second summand is also negated by $\mathbf{d}\tau$. Hence,
$H^2(G,\FF_3) = 0$, and we conclude that
$H^2(G,k^{\times}) = 0$ by Lemma~\ref{lem:coh}(c).
\mbox{}\newline \newline
\textit{Case 3.} $P = R_{1^7}$: Then $\D = \H$ or $\F$, and $\Out_\D(P) \cong A_7$.
The odd part of the Schur multiplier is well-known to be $C_3$. Alternatively,
apply Lemma~\ref{lem:coh}(h) to see that the exponent of the odd part of the
Schur multiplier is $3$, and then use Lemma~\ref{lem:coh}(e,g). 
\end{proof}

\begin{Thm}\label{t:lim0}
For $q$ an odd prime power and $\F = \Sol(q)$, we have
\[
\lim_{[S(\F^{cr})]} \A_\F^2 = 0.
\]
\end{Thm}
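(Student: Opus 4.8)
The plan is to evaluate the degree-zero limit through the cochain complex of Lemma~\ref{L:cochain}: since $\lim_{[S(\F^{cr})]}\A_\F^2 = \ker\bigl(\delta^0\colon C^0(\A_\F^2)\to C^1(\A_\F^2)\bigr)$, only the values of $\A_\F^2$ on chains of length $0$ and $1$ are relevant, and it suffices to show that $\delta^0$ is injective. I would begin with the reduction $\A_\F^2([P]) = H^2(\Aut_\F(P),k^\times)\cong H^2(\Out_\F(P),k^\times)$: the kernel $\Inn(P)$ of $\Aut_\F(P)\to\Out_\F(P)$ is a $2$-group, and as $k$ has characteristic $2$ the group $k^\times$ has no nontrivial $2$-power torsion, so $H^1(\Inn(P),k^\times)=H^2(\Inn(P),k^\times)=0$ and the inflation map is an isomorphism by the Lyndon--Hochschild--Serre spectral sequence. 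With this identification, Lemma~\ref{l:trivmult} pins down exactly which length-zero values survive.

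For $l>0$ the argument is immediate: Lemma~\ref{l:trivmult}(a) gives $\A_\F^2([P])=0$ for every $P\in\F^{cr}$, so $C^0(\A_\F^2)=0$ and the limit vanishes by Lemma~\ref{L:limposet}(a).

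The case $l=0$ is where the content lies. By Lemma~\ref{l:trivmult}(b) (cases (i) and (iii) with $\D=\F$) together with Proposition~\ref{p:sol3}, the value $\A_\F^2([P])$ is nonzero, and isomorphic to $C_3$, for exactly the two classes $P=R$, with $\Out_\F(R)\cong A_7$, and $P=QR$, with $\Out_\F(QR)\cong (C_3\times C_3)\overset{-1}{\rtimes}C_2$. I would then apply Lemma~\ref{L:limposet}(b) with $X_1=R$ and $X_2=QR$. This first requires establishing $R<QR$ up to $\F$-conjugacy, so that the length-one chain $(R<QR)$ of $\F$-centric radical subgroups is available, and a choice of auxiliary subgroup $Y<QR$; the natural candidate is $Y=Q=R_0$, which is $\F$-centric radical of index $2$ in $QR$ with $\A_\F^2([Q])=0$. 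For condition (iii) I would note that the diagonal torus $C_3\times C_3\leq\Out_\F(QR)$ is induced by the three factors $L_i$, each of which preserves $R_0=Q$; hence $\Aut_\F(Q<QR)$ still contains this Sylow $3$-subgroup, and since the class generating $H^2(\Out_\F(QR),k^\times)\cong C_3$ is detected there, the restriction $H^2(\Out_\F(QR),k^\times)\to H^2(\Aut_\F(Q<QR),k^\times)$ is injective.

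The main obstacle is condition (ii): one must show that the odd Schur multiplier $C_3$ of $A_7$ coming from $R$ and the class $C_3$ coming from $QR$ restrict injectively, and with a common image, into $H^2(\Aut_\F(R<QR),k^\times)$. My strategy would be to locate a single elementary abelian $C_3\times C_3$ inside $\Aut_\F(R<QR)$ on which both classes are detected, namely the image of the $3$-local automorphisms acting simultaneously on $R$ and on $QR$; by Lemma~\ref{lem:coh}(e),(g) each class restricts to the nontrivial determinant-type class on this subgroup, so the two images agree. Carrying this out demands explicit control of the embedding $R<QR$, of the chain stabiliser $\Aut_\F(R<QR)$ and its image in $A_7$, and of how the detecting $C_3\times C_3$ sits inside both $A_7$ and $(C_3\times C_3)\overset{-1}{\rtimes}C_2$; this comparison of the two cohomology classes along the common refinement is the delicate step. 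Once it is in place, Lemma~\ref{L:limposet}(b) gives $\ker\delta^0=0$, whence $\lim_{[S(\F^{cr})]}\A_\F^2=0$.
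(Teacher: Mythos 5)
Your overall skeleton matches the paper's proof: the cochain-complex reduction, Lemma~\ref{L:limposet}(a) for $l>0$ via Lemma~\ref{l:trivmult}(a), and for $l=0$ an application of Lemma~\ref{L:limposet}(b) with $X_1=R$, $X_2=QR$, $Y=Q$. The genuine gap is in condition (ii), which you yourself flag as ``the delicate step'' and leave as a plan rather than a proof. Two concrete ingredients are missing. First, you have no control over the chain stabilisers $\Aut_\F(R<QR)$ and $\Aut_\F(Q<QR)$: the paper's key tool here is Lemma~\ref{l:qrweakly}, that $Q$, $R$ and $QR$ are all weakly $\F$-closed, which forces $\Aut_\F(R<QR)=\Aut_\F(QR)=\Aut_\F(Q<QR)$, so that the maps out of $F([QR])$ are literally the identity. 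Without this (or some substitute), it is not even clear that the order-$3$ elements of $\Aut_\F(QR)$ preserve $R$ --- recall $R$ is the $D_8*D_8*D_8$ subgroup of Proposition~\ref{p:ngpnleqk}(a), not contained in $Q$ --- so the injectivity of $F([X_2])\to F([X_1<X_2])$ required by (ii) is unproven. (Your argument for condition (iii) does work, since the diagonal $3$-elements visibly normalise $Q=R_0$; it is precisely the analogous statement for the chain $(R<QR)$ that needs proof and is nontrivial.)

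Second, for the map $F([R])\to F([R<QR])$ you need the image of $\Aut_\F(R<QR)\to\Out_\F(R)\cong A_7$ to contain a Sylow $3$-subgroup of $A_7$; the paper establishes this by an explicit computation in $\bar{H}=\Omega_7(3)$, where $\bar{R}$ is the stabiliser of an orthogonal decomposition into seven square-norm lines and the image in question is identified as $N_{A_7}(\gen{(1,2)(3,4),(1,3)(2,4)})$, which contains a Sylow $3$-subgroup; injectivity on $H^2(-,\FF_3)$ follows, and Lemma~\ref{lem:coh}(g) transfers this to $k^\times$ coefficients. Your proposed ``common detecting $C_3\times C_3$'' comparison could in principle be completed (both classes restrict into the canonical cup-product line inside $H^2$ of a common Sylow $C_3\times C_3$), but it requires exactly this geometric input, which you do not supply. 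Note also that once weak closure is in hand the paper's finish is simpler than your comparison of classes: $F([R<QR])=F([QR])\cong C_3$, so an injective map from $F([R])\cong C_3$ is automatically an isomorphism and ``same image'' comes for free, with no detection argument needed.
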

\begin{proof}
Let $(\F,\alpha)$ be a K\"ulshammer-Puig pair. When $l > 0$, all minimal
elements of the poset $[S(\F^{cr})]$, namely the chains $\sigma = (R)$ of
length one, have $\alpha_{[\sigma]} = 0$ by Lemma~\ref{l:trivmult}.  Thus, the
theorem holds in this case by Lemma~\ref{L:limposet}(a). 

It remains to consider the case $l = 0$. Then $H^2(\Out_\F(P), k^\times)$ is
nonzero (of order $3$) if and only if $P = R_{1^7}$ or $QR_{1^7}$.  For the
remainder of the proof, we set $R := R_{1^7}$, for short. Consider the chains
$\sigma := (R < QR)$ and $\tau := (Q < QR)$. All three subgroups $R$, $Q$, and
$QR$ are weakly $\F$-closed by Lemma \ref{l:qrweakly}; hence $\Aut_\F(\sigma) =
\Aut_\F(QR) = \Aut_\F(\tau)$ and the induced map on $\A^2$ is the identity in
each of these cases. We next prove that the induced map
\begin{equation}\label{e:commres}
H^2(\Aut_{\F}(R), k^{\times}) \to H^2(\Aut_\F(\sigma), k^{\times})
\end{equation}
is injective.
Once this is done, Lemma~\ref{L:limposet}(b) then yields that
$\lim_{[S(\F^{cr}])} \A^2 = 0$.

By Lemmas~\ref{l:qrweakly} and \ref{l:QRandQR*}, $QR$ contains $R$ as a
normal subgroup with index four, and $QR/R \cong C_2 \times
C_2$.  Hence, Lemma~\ref{l:abovewc} yields that the restriction map
$\Aut_{\F}(\sigma) = \Aut_\F(QR) \to \Aut_\F(R)$ induces an
isomorphism
\[
\Aut_\F(QR)/\Aut_{R}(QR) \longrightarrow N_{\Out_{\F}(R)}(\Out_{QR}(R)). 
\]
This isomorphism identifies
$\Aut_\F(QR)/\Aut_{R}(QR)$ with the normalizer in
$\Out_\F(R) \cong A_7$ of the four subgroup $QR/R
\cong \Aut_{QR}(R)/\Aut_{R}(R)$. 

Since this normalizer contains a Sylow $3$-subgroup of $A_7$, we conclude that
the restriction map 
\[
\rho_3\colon H^2(\Aut_{\F}(R), \FF_3) \longrightarrow H^2(\Aut_{\F}(\sigma), \FF_3)
\]
in $\FF_3$-cohomology is injective by \cite[Corollary~3.6.18]{Benson1998b}. 
By \cite[6.7.6]{WeibelHA} on the functoriality of restriction, the diagram 
\[
\xymatrix{
H^2(\Aut_\F(R), \FF_3) \ar[r]^{\rho_3}\ar[d] & H^2(\Aut_\F(\sigma), \FF_3) \ar[d] \\
H^2(\Aut_\F(R), k^\times)\otimes \mathbb{Z}_{(3)} \ar[r]^{\rho_{(3)}} & H^2(\Aut_\F(\sigma), k^\times)\otimes \mathbb{Z}_{(3)}\\
}
\]
commutes. Here, the vertical arrows are given by the isomorphisms of Lemma
\ref{lem:coh}(g), which applies since $\Aut_{\F}(R)$ and
$\Aut_{\F}(\sigma)$ have Sylow $3$-subgroups of order $3^2$. Therefore
$\rho_{(3)}$ is injective, as claimed. This completes the proof in the
case $l=0$ and of the theorem.
\end{proof}

\begin{proof}[Proof of Theorem \ref{t:solblocks}]
By \cite{LeviOliver2002} there exists a centric linking system associated with
$\F$. Thus \cite[Theorem 1.2]{Libman2011} yields that 
\[ 
\lim_{[S(\F^{c})]} \A_\F^2 \cong
\lim_{[S(\F^{cr})]} \A_\F^2.
\] 
The result now follows from Theorem \ref{t:lim0}.
\end{proof}

\section*{Appendix: Hasse diagrams}

Displayed below without proof are Hasse diagrams for the poset of isomorphism
classes of centric radicals in $\Sol(q)$ that were computed with the aid of
Magma \cite{Magma}. 

\begin{center}
\begin{figure}[h]
\label{hasse1}
\caption{Hasse diagram for $[\Sol(q)^{cr}]$, $q \equiv \pm 3 \pmod{8}$} 
\vspace{1cm}
\begin{tikzpicture}
    \node (4) at (-4,0) {$2^4$};
    \node (6) at (-4,2) {$2^6$};
    \node (7) at (-4,3) {$2^7$};
    \node (8) at (-4,4) {$2^8$};
    \node (9) at (-4,6) {$2^9$};
    \node (10) at (-4,8) {$2^{10}$};
    \node (S) at (2,8) {$S$};
    \node (CSU) at (7,6) {$C_S(U)$};
    \node (QR*) at (5,6) {$QR_{1^7}'$};
    \node (QR) at (3,6) {$QR_{1^7}$};
    \node (RR*) at (1,6) {$C_S(E/Z)$};
    \node (Q) at (5,4) {$Q$};
    \node (CSE) at (6,3) {$C_S(E)$};
    \node (R) at (1,3) {$R_{1^7}$};
    \node (R*) at (3,2) {$R_{1^7}'$};
    \node (A) at (3,0) {$A$};
    \draw (A)--(R);
    \draw (A)--(CSE);
    \draw (CSE)--(RR*);
    \draw (R*)--(RR*); 
    \draw (R*)--(QR*);
    \draw (Q)--(QR);
    \draw (Q)--(QR*);
    \draw (Q)--(CSU);
    \draw (R)--(RR*);
    \draw (R)--(QR);
    \draw (RR*)--(S);
    \draw (QR)--(S);
    \draw (QR*)--(S);
    \draw (CSU)--(S);
    \draw (CSE)--(CSU);
\end{tikzpicture}
\end{figure}
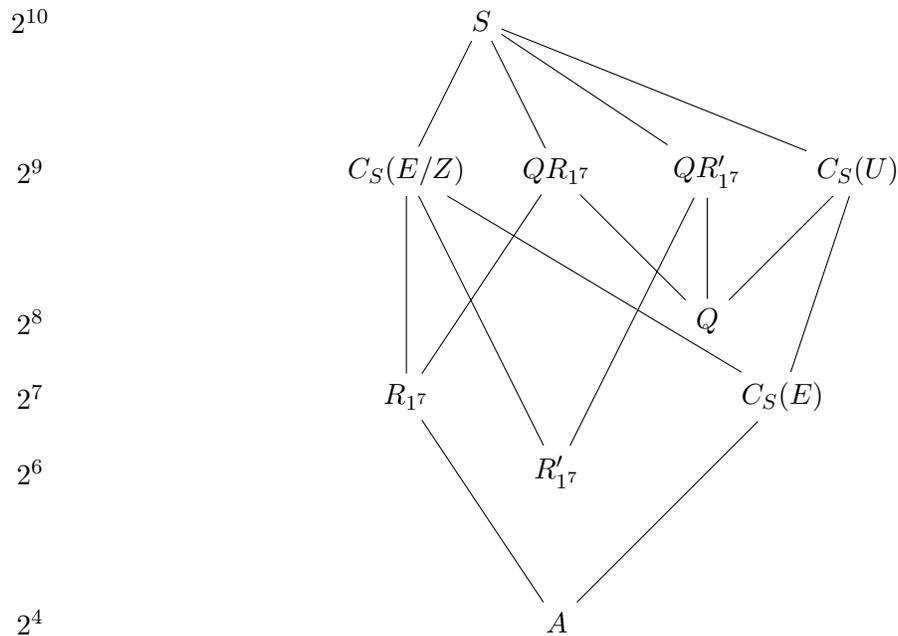
\end{center}

\begin{center}
\begin{figure}[h]
\label{hasse2}
\caption{Hasse diagram for $[\Sol(q)^{cr}]$, $q \equiv \pm 7 \pmod{16}$, i.e. for $l = 1$}
\vspace{1cm}
\begin{tikzpicture}
\node (24) at (-4,-10) {$2^4$};
 \node (26) at (-4,-9) {$2^6$};
    
  \node (27) at (-4,-8) {$2^7$};
    \node (28) at (-4,-7) {$2^8$};
  \node (29) at (-4,-6) {$2^9$};

\node (27l) at (-4,-3) {$2^{7+l}$};
   \node (28l) at (-4,-1) {$2^{8+l}$};
 \node (29l) at (-4,2) {$2^{9+l}$};
    \node (292l) at (-4,4) {$2^{9+2l}$};
\node (273l) at (-4,6) {$2^{7+3l}$};    
   
    \node (293l) at (-4,8) {$2^{9+3l}$};
    \node (2103l) at (-4,10) {$2^{10+3l}$};

    \node (1) at (6,10) {$S$};
   
    \node (2) at (-0.5,8) {$C_S(U)$};
    \node (11) at (0.5,4) {$R_1R_2Q_3\langle \tau \rangle$};

    \node (9) at (9.5,2) {$Q_1Q_2R_3\langle \tau \rangle$};
    \node (10) at (5,2) {$Q_1Q_2'R_3\langle \tau' \rangle$};

    \node (5) at (-1,-1) {$Q_1Q_2R_3$};
    \node (6) at (4.5,-1) {$Q_1Q_2'R_3$};
    \node (7) at (1,-6) {$Q_1Q_2Q_3\langle \tau \rangle$};
    \node (8) at (6,-6) {$Q_1Q_2Q_3'\langle \tau \rangle$};

      \node (3) at (-1,-7) {$Q_1Q_2Q_3$};
    \node (4) at (3,-7) {$Q_1Q_2Q_3'$};
    
    \node (12) at (11.5,-8) {$R_{1^7}$};
    \node (13) at (8,-9) {$R_{1^7}'$};
        \node (14) at (12,-3) {$R_{1^52}$};
        
   \node (15) at (12,8) {$C_S(E/Z)$};
  \node (16) at (10.5,-10) {$A$};
  \node (17) at (6.5,6) {$C_S(E)$};

    \draw (1)--(2);
  \draw (1)--(10);  
  \draw (1)--(11);
  \draw (1)--(9);
   \draw (1)--(15);

  \draw (2)--(5);
  \draw (2)--(6);
  \draw (2)--(17);
    
   \draw (3)--(5);
   \draw (3)--(7);
  
   \draw (4)--(5);
   \draw (4)--(6);
   \draw (4)--(8);
   \draw (4)--(11);
    \draw (5)--(9);

    \draw (6)--(10);

       \draw (7)--(9);
       \draw (7)--(11);
       \draw (7)--(12);    
    \draw (8)--(9);
    \draw (9)--(14);
    \draw (10)--(13);

      \draw (12)--(14); 
      \draw (12)--(16); 
  \draw (13)--(15);
    
       \draw (14)--(15); 
     \draw (15)--(17);
        \draw (16)--(17);
\end{tikzpicture}
\end{figure}
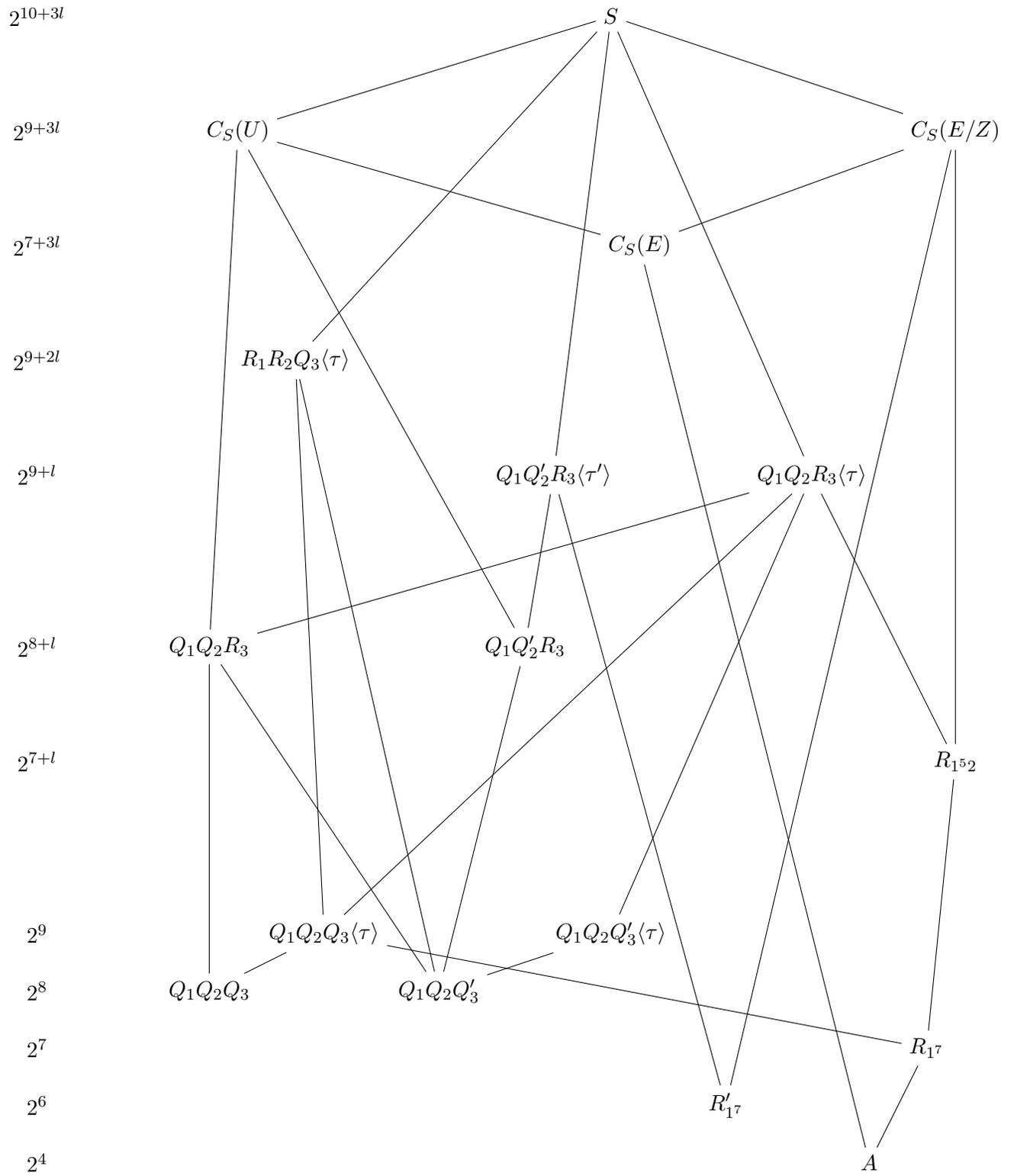
\end{center}

\bibliographystyle{amsalpha}{}
\bibliography{mybib}

\newcommand{\etalchar}[1]{$^{#1}$}
\def\cprime{$'$}
\providecommand{\bysame}{\leavevmode\hbox to3em{\hrulefill}\thinspace}
\providecommand{\MR}{\relax\ifhmode\unskip\space\fi MR }
\providecommand{\MRhref}[2]{%
  \href{http://www.ams.org/mathscinet-getitem?mr=#1}{#2}
}
\providecommand{\href}[2]{#2}
\begin{thebibliography}{CCN{\etalchar{+}}85}

\bibitem[AC10]{AschbacherChermak2010}
Michael Aschbacher and Andrew Chermak, \emph{A group-theoretic approach to a
  family of 2-local finite groups constructed by {L}evi and {O}liver}, Ann. of
  Math. (2) \textbf{171} (2010), no.~2, 881--978.

\bibitem[AKO11]{AschbacherKessarOliver2011}
Michael Aschbacher, Radha Kessar, and Bob Oliver, \emph{Fusion systems in
  algebra and topology}, London Mathematical Society Lecture Note Series, vol.
  391, Cambridge University Press, Cambridge, 2011. \MR{2848834}

\bibitem[An93]{An1993}
Jian~Bei An, \emph{{$2$}-weights for classical groups}, J. Reine Angew. Math.
  \textbf{439} (1993), 159--204. \MR{1219699}

\bibitem[BCP97]{Magma}
Wieb Bosma, John Cannon, and Catherine Playoust, \emph{The {M}agma algebra
  system. {I}. {T}he user language}, J. Symbolic Comput. \textbf{24} (1997),
  no.~3-4, 235--265, Computational algebra and number theory (London, 1993).
  \MR{1484478}

\bibitem[Ben98a]{Benson1998}
D.~J. Benson, \emph{Representations and cohomology. {I}}, second ed., Cambridge
  Studies in Advanced Mathematics, vol.~30, Cambridge University Press,
  Cambridge, 1998, Basic representation theory of finite groups and associative
  algebras.

\bibitem[Ben98b]{Benson1998b}
\bysame, \emph{Representations and cohomology. {II}}, second ed., Cambridge
  Studies in Advanced Mathematics, vol.~31, Cambridge University Press,
  Cambridge, 1998, Cohomology of groups and modules.

\bibitem[Ben98c]{Benson1998c}
David~J. Benson, \emph{Cohomology of sporadic groups, finite loop spaces, and
  the {D}ickson invariants}, Geometry and cohomology in group theory ({D}urham,
  1994), London Math. Soc. Lecture Note Ser., vol. 252, Cambridge Univ. Press,
  Cambridge, 1998, pp.~10--23. \MR{1709949 (2001i:55017)}

\bibitem[BLO03]{BrotoLeviOliver2003}
Carles Broto, Ran Levi, and Bob Oliver, \emph{The homotopy theory of fusion
  systems}, J. Amer. Math. Soc. \textbf{16} (2003), no.~4, 779--856
  (electronic).

\bibitem[CCN{\etalchar{+}}85]{ATLAS}
J.~H. Conway, R.~T. Curtis, S.~P. Norton, R.~A. Parker, and R.~A. Wilson,
  \emph{Atlas of finite groups}, Oxford University Press, Eynsham, 1985,
  Maximal subgroups and ordinary characters for simple groups, With
  computational assistance from J. G. Thackray. \MR{827219}

\bibitem[COS08]{ChermakOliverShpectorov2008}
Andrew Chermak, Bob Oliver, and Sergey Shpectorov, \emph{The linking systems of
  the {S}olomon 2-local finite groups are simply connected}, Proc. Lond. Math.
  Soc. (3) \textbf{97} (2008), no.~1, 209--238. \MR{2434096 (2009g:55018)}

\bibitem[CR90]{CurtisReiner1990}
Charles~W. Curtis and Irving Reiner, \emph{Methods of representation theory.
  {V}ol. {I}}, Wiley Classics Library, John Wiley \& Sons Inc., New York, 1990,
  With applications to finite groups and orders, Reprint of the 1981 original,
  A Wiley-Interscience Publication.

\bibitem[Cra11]{CravenTheory}
David~A. Craven, \emph{The theory of fusion systems}, Cambridge Studies in
  Advanced Mathematics, vol. 131, Cambridge University Press, Cambridge, 2011,
  An algebraic approach. \MR{2808319}

\bibitem[GLS98]{GLS3}
Daniel Gorenstein, Richard Lyons, and Ronald Solomon, \emph{The classification
  of the finite simple groups. {N}umber 3. {P}art {I}. {C}hapter {A}},
  Mathematical Surveys and Monographs, vol.~40, American Mathematical Society,
  Providence, RI, 1998, Almost simple $K$-groups. \MR{1490581 (98j:20011)}

\bibitem[Kar87]{Karpilovsky1987}
Gregory Karpilovsky, \emph{The {S}chur multiplier}, London Mathematical Society
  Monographs. New Series, vol.~2, The Clarendon Press, Oxford University Press,
  New York, 1987. \MR{1200015}

\bibitem[Kes06]{Kessar2006}
Radha Kessar, \emph{The {S}olomon system {${\mathcal F}\sb {\rm Sol}(3)$} does
  not occur as fusion system of a 2-block}, J. Algebra \textbf{296} (2006),
  no.~2, 409--425.

\bibitem[KLLS18]{kessar2018weight}
Radha Kessar, Markus Linckelmann, Justin Lynd, and Jason Semeraro, \emph{Weight
  conjectures for fusion systems}, arXiv preprint arXiv:1810.01453 (2018).

\bibitem[Lib11]{Libman2011}
Assaf Libman, \emph{The gluing problem in the fusion systems of the symmetric,
  alternating and linear groups}, J. Algebra \textbf{341} (2011), 209--245.
  \MR{2824518}

\bibitem[Lin04]{Linckelmann2004}
Markus Linckelmann, \emph{Fusion category algebras}, J. Algebra \textbf{277}
  (2004), no.~1, 222--235.

\bibitem[Lin05]{Linckelmann2005}
\bysame, \emph{Alperin's weight conjecture in terms of equivariant {B}redon
  cohomology}, Math. Z. \textbf{250} (2005), no.~3, 495--513.

\bibitem[Lin09]{Linckelmann2009b}
\bysame, \emph{On {$H^\ast({\mathcal{C}};k^\times)$} for fusion systems},
  Homology Homotopy Appl. \textbf{11} (2009), no.~1, 203--218. \MR{2506133}

\bibitem[Lin19]{LinckelmannBT2}
\bysame, \emph{The block theory of finite group algebras}, London Mathematical
  Society Student Texts, vol.~2, Cambridge University Press, 2019.

\bibitem[LO02]{LeviOliver2002}
Ran Levi and Bob Oliver, \emph{Construction of 2-local finite groups of a type
  studied by {S}olomon and {B}enson}, Geom. Topol. \textbf{6} (2002), 917--990
  (electronic).

\bibitem[LO05]{LeviOliver2005}
\bysame, \emph{Correction to: ``{C}onstruction of 2-local finite groups of a
  type studied by {S}olomon and {B}enson'' [{G}eom. {T}opol. {\bf 6} (2002),
  917--990 (electronic); mr1943386]}, Geom. Topol. \textbf{9} (2005),
  2395--2415 (electronic).

\bibitem[MR17]{MalleRobinson2017}
Gunter Malle and Geoffrey~R. Robinson, \emph{On the number of simple modules in
  a block of a finite group}, J. Algebra \textbf{475} (2017), 423--438.
  \MR{3612478}

\bibitem[Nak61]{Nakaoka1961}
Minoru Nakaoka, \emph{Homology of the infinite symmetric group}, Ann. of Math.
  (2) \textbf{73} (1961), 229--257. \MR{0131874}

\bibitem[Nav98]{Navarro1998}
G.~Navarro, \emph{Characters and blocks of finite groups}, London Mathematical
  Society Lecture Note Series, vol. 250, Cambridge University Press, Cambridge,
  1998.

\bibitem[Par10]{Park2010}
Sejong Park, \emph{The gluing problem for some block fusion systems}, J.
  Algebra \textbf{323} (2010), no.~6, 1690--1697. \MR{2588132}

\bibitem[Sem14]{Semeraro2014}
Jason Semeraro, \emph{Trees of fusion systems}, J. Algebra \textbf{399} (2014),
  1051--1072. \MR{3144626}

\bibitem[Sol74]{Solomon1974}
Ronald Solomon, \emph{Finite groups with {S}ylow {$2$}-subgroups of type
  {$.3$}}, J. Algebra \textbf{28} (1974), 182--198.

\bibitem[Wei94]{WeibelHA}
Charles~A. Weibel, \emph{An introduction to homological algebra}, Cambridge
  Studies in Advanced Mathematics, vol.~38, Cambridge University Press,
  Cambridge, 1994. \MR{1269324}

\bibitem[Wil09]{Wilson2009}
Robert~A. Wilson, \emph{The finite simple groups}, Graduate Texts in
  Mathematics, vol. 251, Springer-Verlag London, Ltd., London, 2009.
  \MR{2562037}

\end{thebibliography}
\end{document}